\documentclass[oneside, hidelinks, 12pt]{article}
\emergencystretch=15pt  
\raggedbottom


\usepackage{hyperref}
\usepackage{amsmath, amsthm, amssymb}
\usepackage{graphicx}
\usepackage{caption}
\usepackage{mathbbol}
\usepackage{txfonts}
\usepackage[mathscr]{eucal}
\usepackage{amssymb,latexsym}
\usepackage{verbatim}
\usepackage{graphicx}
\usepackage{epstopdf}
\usepackage{amsmath}
\usepackage{amsthm}
\usepackage{enumerate}
\usepackage{framed}
\usepackage{authblk}
\usepackage{setspace}
\usepackage[usenames]{color}
\definecolor{dgray}{RGB}{90,90,90}
\definecolor{gray}{RGB}{120,120,120}
\definecolor{lgray}{RGB}{150,150,150}


\theoremstyle{plain}
\newtheorem{thm}{Theorem}[section]
\newtheorem{lem}[thm]{Lemma}
\newtheorem{prop}[thm]{Proposition}
\newtheorem{cor}[thm]{Corollary}

\theoremstyle{definition}
\newtheorem{Def}[thm]{Definition}

\newtheorem{exm}[thm]{Example}

\theoremstyle{remark}
\newtheorem{rem}[thm]{Remark}


\newcommand{\field}[1]{\mathbb{#1}}

\newcommand\dhat{\hat{d}_\tau}

\renewcommand\S{\Sigma}
\newcommand\s{\sigma}
\renewcommand\d{\partial}

\newcommand\e{\epsilon}
\renewcommand\b{\beta}

\newcommand\g{\gamma}
\newcommand\8{\infty}
\renewcommand\a{\alpha}

\newcommand\beq{\begin{equation}}
\newcommand\eeq{\end{equation}}
\newcommand\be{\begin{equation}}
\newcommand\ee{\end{equation}}
\newcommand\ben{\begin{enumerate}}
\newcommand\een{\end{enumerate}}
\newcommand\bit{\begin{itemize}}
\newcommand\eit{\end{itemize}}

\makeatletter

\newcommand{\Rmnum}[1]{\expandafter\@slowromancap\romannumeral #1@}

\makeatother


\newcounter{mnotecount}[section]

\setcounter{equation}{0}


\begin{document}

\title{Null distance on a spacetime}
\date{}

\author{Christina Sormani\thanks{C. Sormani began this research while in residence at the Mathematical Sciences Research Institute (MSRI) funded by NSF Grant No. 0932078000. Her research is funded in part by a PSC CUNY grant and NSF DMS 1309360.}  \, and Carlos Vega\thanks{C. Vega began this research as a postdoc at MSRI funded by NSF Grant No. 0932078 000, and continued this research as a postdoc at CUNY funded by Professor Sormani's NSF grants DMS - 1006059 and DMS - 1309360.} }

\maketitle

\begin{abstract}
Given a time function $\tau$ on a spacetime $M$, we define a \emph{null distance function}, $\hat{d}_\tau$, built from and closely related to the causal structure of $M$. In basic models with timelike $\nabla \tau$, we show that 1) $\hat{d}_\tau$ is a definite distance function, which induces the manifold topology, 2) the causal structure of $M$ is completely encoded in $\hat{d}_\tau$ and $\tau$. In general, $\hat{d}_\tau$ is a conformally invariant pseudometric, which may be indefinite. We give an `anti-Lipschitz' condition on $\tau$, which ensures that $\hat{d}_\tau$ is definite, and show this condition to be satisfied whenever $\tau$ has gradient vectors $\nabla \tau$ almost everywhere, with $\nabla \tau$ locally `bounded away from the light cones'. As a consequence, we show that the cosmological time function of \cite{AGHcosmo} is anti-Lipschitz when `regular', and hence induces a definite null distance function. This provides what may be interpreted as a canonical metric space structure on spacetimes which emanate from a common initial singularity, e.g. a `big bang'.
\end{abstract}

\newpage
\renewcommand\contentsname{}
\setcounter{tocdepth}{2}
\tableofcontents

\section{Introduction}

A basic distinction between Lorentzian and Riemannian geometry is the fact that Lorentzian manifolds are not known to carry an intrinsic distance function. (The standard Lorentzian `distance' function, reviewed below, is such in name only.) While any manifold is metrizable, what is desired more specifically is a distance function which captures a sufficient amount of the geometric structure. Among other applications, one motivation for finding such a distance function is the question of convergence in the Lorentzian setting, and of taking meaningful limits of sequences of spacetimes.

In this paper, we introduce a distance function on spacetime, which is positive-definite under natural conditions, and yet closely related to the causal structure. Indeed, in basic model cases, it encodes the causal structure completely. We begin first with a brief description of this distance, and summary of its main features.

Let $M$ be a spacetime and $\tau$ a time function on $M$. (Hence, $\tau$ increases along future causal curves.) Fix any two points $p, q \in M$. Let $\b$ be a `piecewise causal' curve from $p$ to $q$, that is, $\b$ is composed of subsegments which are either future or past causal, but $\b$ itself may wiggle backwards and forwards in time. (See Figure \ref{nulldist}.)
\begin{figure} [h]
\begin{center}
\includegraphics[width=13cm]{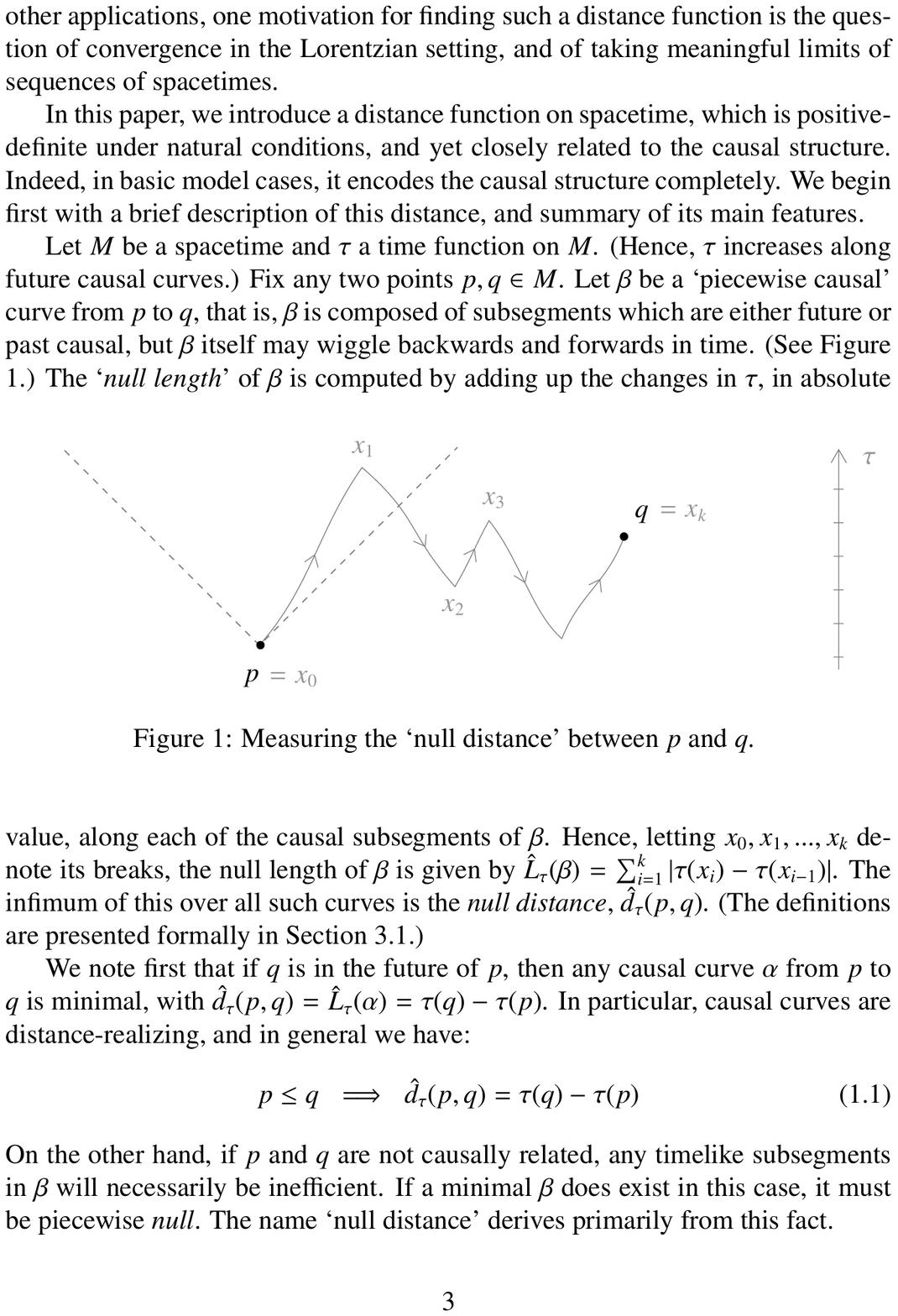}
\caption[...]{Measuring the `null distance' between $p$ and $q$. \label{nulldist}} 
\end{center}
\end{figure}
The `\emph{null length}' of $\b$ is computed by adding up the changes in $\tau$, in absolute value, along each of the causal subsegments of $\b$. Hence, letting $x_0, x_1, ..., x_k$ denote its breaks, the null length of $\b$ is given by $\hat{L}_\tau(\b) = \sum_{i=1}^k|\tau(x_i) - \tau(x_{i-1})|$. The infimum of this over all such curves is the \emph{null distance}, $\hat{d}_\tau(p,q)$. (The definitions are presented formally in Section \ref{secDef}.)

We note first that if $q$ is in the future of $p$, then any causal curve $\a$ from $p$ to $q$ is minimal, with $\hat{d}_\tau(p,q) = \hat{L}_\tau(\a) = \tau(q) - \tau(p)$. In particular, causal curves are distance-realizing, and in general we have:
\be
p \le q \; \; \Longrightarrow \; \; \hat{d}_\tau(p,q) = \tau(q) - \tau(p) \label{causality}
\ee
On the other hand, if $p$ and $q$ are not causally related, any timelike subsegments in $\b$ will necessarily be inefficient. If a minimal $\b$ does exist in this case, it must be piecewise \emph{null}. The name `null distance' derives primarily from this fact.

For $\tau = t$ on Minkowski space, $\hat{d}_\tau$ is a definite distance function, whose spheres are coordinate cylinders with axes in the time direction. Moreover, the converse of \eqref{causality} holds, and hence the causal structure is completely encoded via: 
\be
p \le q \; \; \Longleftrightarrow \; \; \hat{d}_\tau(p,q) = \tau(q) - \tau(p) \label{causalityencoded}
\ee
(See Proposition \ref{Mink}. This situation is also extended to more general warped product models in Theorem \ref{warpedthm}.) 

On the other hand, taking $\tau = t^3$ on Minkowski induces a null distance function which is indefinite. In this case, $\hat{d}_\tau$ assigns a distance of 0 between any two points in the $\{t=0\}$ slice, (where $\nabla \tau$ vanishes). Moreover, as a consequence of this, $\hat{d}_\tau$ can not distinguish causal relation for points on opposite sides of this slice, (see Proposition \ref{tcubed}). (Indeed, for \eqref{causalityencoded} to hold, it is necessary that $\hat{d}_\tau$ be definite.)

Evidently, the resolution of $\hat{d}_\tau$ with respect to separation of points, and causality, is closely related to the question of whether the gradient $\nabla \tau$ remains timelike, for smooth $\tau$. More generally, this question may be rephrased in terms of a lower bound on the `causal average rate of change' of $\tau$, which is realized below in the form of an `anti-Lipschitz' condition, (cf. Definition \ref{revLip}). A closely related condition is used by Chru{\'s}ciel, Grant, and Minguzzi in \cite{difftime}. Indeed, these two conditions are shown to be equivalent below. (As noted in \cite{difftime}, similar conditions were previously considered by Seifert in \cite{Seifertcosmic}.)

A time function $\tau$ which is locally anti-Lipschitz induces a definite null distance function, and we show this condition to be satisfied whenever $\tau$ has gradient vectors $\nabla \tau$ almost everywhere, with $\nabla \tau$ locally `bounded away from the light cones', (here quantified via Definition \ref{bddaway}). As a consequence, we show that the cosmological time function, defined by Andersson, Galloway, and Howard in \cite{AGHcosmo}, is locally anti-Lipschitz when `regular', and hence induces a definite null distance. For spacetimes $(M,g)$ emanating from a common initial singularity, cosmological time may be viewed as canonical, (it is determined by $g$), and hence its induced null distance function provides a uniform way of metrizing such spacetimes.

In general, null distance is closely related to the causal structure, and in basic model cases encodes causality completely via \eqref{causalityencoded}. We would expect that some (local) version of \eqref{causalityencoded} should hold more generally, under natural conditions on $\tau$, if not necessarily when $\hat{d}_\tau$ is definite. (See also Theorem \ref{warpedthm2}.) This is currently an open question, and will be explored in future work.

Strictly speaking, in the definition of $\hat{d}_\tau$, we require only that $\tau$ be strictly increasing along future causal curves, but not necessarily continuous, i.e., that $\tau$ be a `generalized time function'. Hence, the class of spacetimes considered here does include those which are stably causal, but also those which are (only) strongly causal, or for example, (only) past-distinguishing. Of course, in the stably causal, or further globally hyperbolic settings, $M$ admits smooth time functions $\tau$, with further geometric properties. Indeed, a study of $\hat{d}_\tau$ in such cases would be among several natural next steps. We will presently, however, remain more tightly focused on basic properties and examples, including a broad and detailed study of definiteness, as described above. Naturally, this issue is fundamental to a basic understanding of the null `distance', $\hat{d}_\tau$. But furthermore, this is also carried out for practical reasons, as part of the effort to understand the case of cosmological time. We emphasize that, within its scope, cosmological time is of special importance, not only because it is uniquely and `canonically' determined, but because it is so immediately tied to the geometry of the spacetime. 

We note finally that some care has been taken to keep the treatment below as accessible and self-contained as possible. In particular, we have tried to employ economy regarding causal theory, and hope that the development is fairly readable, for example, to an interested Riemannian geometer, with some basic familiarity with spacetimes.

\emph{Acknowledgements:} The authors would like to express their gratitude to the Mathematical Sciences Research Institute.  We would particularly like to thank the organizers of the 
\href{https://www.msri.org/programs/275}{Mathematical General Relativity Program:} 
\href{https://www.msri.org/people/7885}{James Isenberg}, 
\href{https://www.msri.org/people/27158}{Yvonne Choquet-Bruhat}, 
\href{https://www.msri.org/people/21157}{Piotr Chru{\'s}ciel}, 
\href{https://www.msri.org/people/21226}{Greg Galloway}, 
\href{https://www.msri.org/people/12042}{Gerhard Huisken}, 
\href{https://www.msri.org/people/2699}{Sergiu Klainerman}, 
\href{https://www.msri.org/people/3082}{Igor Rodnianski}, and 
\href{https://www.msri.org/people/2823}{Richard Schoen}, and our colleagues in the cosmology lunch group: 
\href{https://www.msri.org/people/24014}{Lars Andersson},
\href{https://www.msri.org/people/24012}{Hans Ringstr\"om},
\href{https://www.msri.org/people/8272}{Vincent Moncrief}.
We would never have begun this project together had we not both had the opportunity to spend a semester in this wonderfully exciting and welcoming program. We extend additional thanks to Hans Ringstr\"om for serving as the second author's mentor during this program. We are especially indebted to Shing-Tung Yau, Lars Andersson, and also Ralph Howard, for inspiring discussions on the question of convergence in the Lorentzian setting. We would also like to thank Stephanie Alexander, for her interest in the subject of metrizing spacetimes, and her invitations to speak at Urbana, and thank Ming-Liang Cai, Pengzi Miao, Steve (Stacey) Harris, and James Hebda for their vital support. We extend special thanks to Greg Galloway, for his continued support in general, his interest in this project, and his valuable feedback and input on this paper. The authors would also like to thank the referees for their feedback, and numerous valuable comments and suggestions, which have led to various improvements in the present draft. Finally, the second author would like to thank Professor Sormani for the wonderful opportunity and experience of developing this project with her at CUNY.

\section{Lorentzian Background}

For completeness, and to set a few conventions, we begin with a brief review of some basic Lorentzian geometry. For further background, we note the standard references \cite{BEE}, \cite{HE}, \cite{ON},  \cite{Penrose}, \cite{Wald}.

\subsection{Spacetimes}
 
Let $M^{n+1}$ be a smooth manifold of dimension $n + 1$. A Lorentzian metric $g$ on $M$ has signature $(-, +, +, \cdots, +)$, and classifies a vector $X \in TM$ as \emph{timelike}, \emph{null}, or \emph{spacelike} according as $g(X,X)$ is negative, zero, or positive. Timelike and null vectors are referred to collectively as \emph{causal}, and form a double cone in each tangent space. A \emph{spacetime} is a Lorentzian manifold $(M,g)$ which is \emph{time-oriented}, i.e., has a continuous assignment of `future cone' at each point. Hence, every nontrivial causal vector in a spacetime is either \emph{future-pointing} or \emph{past-pointing}. Unless otherwise indicated, $M$ shall henceforth denote a spacetime. Further, we shall take all Lorentzian and Riemannian metrics to be smooth.

\subsection{Causality}

A piecewise smooth curve $\a : I \to M$ is \emph{future timelike (resp. null, causal)} if $\a'$, including all one-sided tangents at any breaks or endpoints, is always future-pointing timelike (resp. null, causal). Past curves are defined time dually. We count trivial (constant) curves as null, and hence causal. However, we will assume that all nontrivial piecewise smooth curves $\a$ are regular, i.e., that $\a$ is parameterized so that all of its tangents are nontrivial, $\a' \ne 0$.  (This can be achieved, for example, by using the arc length parameterization with respect to any choice of Riemannian metric on $M$.) Furthermore, causal curves will implicitly be assumed to be nontrivial where appropriate. The \emph{timelike future} $I^+(S)$ of a subset $S \subset M$ is the set of points $q \in M$ reachable by a future timelike curve from some $p \in S$. The \emph{causal future} $J^+(S)$ is defined similarly using future causal curves, and time dually we have the pasts $I^-(S)$ and $J^-(S)$. For $p, q \in M$, we write $p \le q$ to mean $q \in J^+(p)$, or equivalently $p \in J^-(q)$. We write $p \ll q$ to mean $ q \in I^+(p)$, or equivalently $p \in I^-(q)$. 

\begin{prop} Let $M$ be a spacetime and $S \subset M$ any subset. Then $I^+(S)$ is open, and $S \subset J^+(S) \subset \overline{I^+(S)}$, and similarly for the pasts.
\end{prop}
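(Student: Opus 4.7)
The statement breaks into three claims: openness of $I^+(S)$, the inclusion $S \subset J^+(S)$, and $J^+(S) \subset \overline{I^+(S)}$. The middle claim is free from the paper's convention that constant curves count as causal: for each $p \in S$ the trivial curve shows $p \in J^+(p) \subset J^+(S)$. My plan for the other two is to localize in a convex normal neighborhood, where the exponential map makes the causal cones explicit and allows concatenation arguments.

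For openness of $I^+(S)$, given $q \in I^+(S)$ there is some $p \in S$ and a future timelike curve $\gamma$ from $p$ to $q$. I would choose a convex normal neighborhood $U$ of $q$ and pick a point $q'$ on $\gamma$ lying in $U$ slightly before $q$, so that $q' \ll q$. The set $I^+(q') \cap U$ is open (it is the image under $\exp_{q'}$ of an open timelike cone in $T_{q'}M$) and contains $q$. For any $r$ in this set, concatenating the portion of $\gamma$ from $p$ to $q'$ with a timelike geodesic from $q'$ to $r$ yields a piecewise smooth future timelike curve from $p$ to $r$, so that $r \in I^+(S)$. Hence an open neighborhood of $q$ sits inside $I^+(S)$.

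For $J^+(S) \subset \overline{I^+(S)}$, suppose $q \in J^+(S)$ via a future causal curve $\gamma$ from some $p \in S$ to $q$; I would construct a sequence in $I^+(S)$ converging to $q$. In a convex normal neighborhood $U$ of $q$, a future timelike geodesic emanating from $q$ supplies points $q_n \to q$ with $q \ll q_n$. The key step is the standard "push-up" principle: prepending a future causal curve to a future timelike curve produces a pair of endpoints joined by a genuine future timelike curve. Applied to $\gamma$ followed by the segment from $q$ to $q_n$, this gives $p \ll q_n$, so $q_n \in I^+(S)$ and $q \in \overline{I^+(S)}$. The degenerate case $p = q$ (trivial $\gamma$) is handled directly by the same geodesic argument, which also shows $p \in \overline{I^+(p)}$ for any $p$.

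The only nontrivial ingredient is the push-up principle; the remaining work is a formal use of convex normal neighborhoods and the fact that concatenations of future timelike curves remain future timelike. The dual statements for $I^-(S)$ and $J^-(S)$ follow by reversing the time orientation of $M$.
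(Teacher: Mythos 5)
Your argument is correct and is the standard one: the paper states this proposition as background causal theory without proof (deferring to the standard references cited at the start of Section 2), so there is nothing to compare against. Your three-part decomposition --- trivial curves for $S \subset J^+(S)$, convex normal neighborhoods plus concatenation for openness of $I^+(S)$, and the push-up principle (which itself follows from Proposition \ref{localcausality}) for $J^+(S) \subset \overline{I^+(S)}$ --- is exactly how the textbooks do it.
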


\subsection{Time Functions}

Following \cite{BEE}, we will say $\tau : M \to \field{R}$ is a \emph{generalized time function} if $\tau$ is strictly increasing along all nontrivial future-directed causal curves. If further $\tau$ is continuous, then $\tau$ is called a \emph{time function}. 

Given any function $f : M \to \field{R}$, the gradient of $f$ at $p \in M$ is defined as usual by $g(\nabla f, X) = Xf$ for all $X \in T_pM$. We note however that if $\nabla f$ is timelike, then $f$ increases in the $- \nabla f$ direction. As in \cite{BStemp}, we will say $\tau : M \to \field{R}$ is a \emph{temporal function} if it is smooth, with past-pointing timelike gradient. It is easy to see that such functions are necessarily time functions. Conversely, given a smooth time function, its gradient is necessarily past-pointing \emph{causal}, but may fail to be everywhere timelike. 

For $p, q \in M$, we will refer to $I^+(p) \cap I^-(q)$ as a `timelike diamond', and similarly to $J^+(p) \cap J^-(q)$ as a `causal diamond'. A spacetime is \emph{strongly causal} if each point admits arbitrarily small timelike diamond neighborhoods. A spacetime is \emph{past-distinguishing} if $I^-(p) = I^-(q)$ implies $p = q$. Any strongly causal spacetime is necessarily past-distinguishing. We note the following, (\cite{BEE}, \cite{MS}):

\begin{prop} Any past-distinguishing spacetime, (and hence any strongly causal spacetime), admits a generalized time function.
\end{prop}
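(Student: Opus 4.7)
The plan is to produce an explicit generalized time function via the classical past-volume construction. First, by paracompactness, fix any smooth Riemannian metric on $M$; multiplying its associated volume form by a suitable positive smooth function (constructed via a partition of unity) yields a finite Radon measure $\mu$ on $M$ which assigns positive mass to every nonempty open set. Then define $\tau : M \to \mathbb{R}$ by $\tau(p) := \mu(I^-(p))$, and claim that $\tau$ is the desired generalized time function.

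There are two things to verify. For monotonicity along future causal curves: if $p \le q$, then any $r \ll p$ satisfies $r \ll q$ by the push-up principle (concatenate a timelike curve from $r$ to $p$ with the causal curve from $p$ to $q$), so $I^-(p) \subseteq I^-(q)$ and hence $\tau(p) \le \tau(q)$. For strict monotonicity along a nontrivial future causal curve $\alpha$ from $p$ to $q$: pick an interior point $s$ of $\alpha$ with $p \ne s$ and $s \ne q$. Past-distinguishing applied to the pair $(p,s)$ gives $I^-(p) \subsetneq I^-(s)$, so we may select $r \in I^-(s) \setminus I^-(p)$; push-up then yields $r \ll q$, so $r \in I^-(q) \setminus I^-(p)$. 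To promote this to the strict inequality $\tau(p) < \tau(q)$, it suffices to exhibit a nonempty open subset of $I^-(q) \setminus I^-(p)$, which will automatically have positive $\mu$-mass.

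The main obstacle is precisely this last step: ruling out the pathological possibility that $I^-(q) \subseteq \overline{I^-(p)}$, in which case the difference $I^-(q) \setminus I^-(p)$ could consist entirely of boundary points and be of measure zero. This is exactly where the full past-distinguishing hypothesis enters (chronology alone is insufficient). My plan for this step is a limit-curve argument: if every point of $I^-(q)$ were a limit of a sequence of points chronologically below $p$, then standard limit-curve techniques combined with past-distinguishing at $p$ would force $I^-(p) = I^-(q)$, contradicting past-distinguishing applied to the distinct points $p$ and $q$. Once this obstruction is eliminated, the openness of $I^-(q)$ permits choosing a point in $I^-(q) \setminus \overline{I^-(p)}$ together with a small open ball on which $\mu$ is strictly positive, finishing the proof. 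The parenthetical final clause (that any strongly causal spacetime is past-distinguishing) is a standard implication in Lorentzian causality theory, following because strong causality locally separates chronological pasts of distinct points via arbitrarily small timelike diamond neighborhoods.
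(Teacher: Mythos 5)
The paper does not prove this proposition; it is quoted as background with references to \cite{BEE} and \cite{MS}, and your past-volume construction $\tau(p)=\mu(I^-(p))$ is exactly the classical argument from that literature, so the overall route is the right one. The weak monotonicity step and the use of past-distinguishing to produce a point $r\in I^-(q)\setminus I^-(p)$ are both fine (though note the concatenation of a timelike curve with a causal curve is only causal; promoting $r\ll s\le q$ to $r\ll q$ is the standard push-up lemma, which you correctly name).

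The genuine gap is that the decisive step --- showing $\mu\bigl(I^-(q)\setminus I^-(p)\bigr)>0$ --- is left as a ``plan'' resting on an appeal to ``standard limit-curve techniques.'' That is the wrong tool here: limit-curve extraction generally requires non-imprisonment or strong causality hypotheses you do not have (past-distinguishing alone does not prevent causal curves from accumulating badly), and as written the step is not proved. Fortunately no such machinery is needed. Having chosen $r\in I^-(q)\setminus I^-(p)$, observe that $W:=I^+(r)\cap I^-(q)$ is open, and nonempty because any interior point $w$ of a future timelike curve from $r$ to $q$ satisfies $r\ll w\ll q$. Moreover $W\cap I^-(p)=\emptyset$: if $w\in I^+(r)\cap I^-(p)$ then $r\ll w\ll p$, so $r\in I^-(p)$, contradicting the choice of $r$. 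Hence $W$ is a nonempty open subset of $I^-(q)\setminus I^-(p)$ and $\mu(I^-(q))-\mu(I^-(p))\ge\mu(W)>0$. (The same two-line observation, applied to each $r\in I^-(q)$, also disposes of your worry that $I^-(q)\subseteq\overline{I^-(p)}$ could occur with $I^-(q)\ne I^-(p)$: a point $r$ in $\overline{I^-(p)}$ has $I^-(r)\subseteq I^-(p)$, since any $s\ll r$ has $I^+(s)$ meeting $I^-(p)$.) With this substitution your proof is complete and matches the standard one.
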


Furthermore, we recall roughly that a spacetime $(M,g)$ is \emph{causal} if it contains no closed causal curves, and \emph{stably causal} if this property persists under small perturbations of $g$. Any stably causal spacetime is necessarily strongly causal. We note the following fundamental Lorentzian result: 

\begin{thm} [\cite{HE}, \cite{BStemp}, \cite{difftime}] \label{tempiftime} 
Let $M$ be a spacetime. The following are equivalent: 
\ben
\item [(a)] $M$ admits a time function.

\item [(b)] $M$ admits a temporal function.

\item [(c)] $M$ is stably causal, (and hence strongly causal).
\een 
\end{thm}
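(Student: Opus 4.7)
The plan is to establish the cyclic chain $(b) \Rightarrow (a) \Rightarrow (c) \Rightarrow (b)$, which splits the work into one easy verification, one classical result of Hawking, and one deep smoothing result of Bernal-Sánchez.

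First, for $(b) \Rightarrow (a)$, the argument is a short direct computation already implicit in the text preceding the theorem. Given a temporal function $\tau$ and any future-directed causal curve $\a : I \to M$, one computes
\[
(\tau \circ \a)'(s) \; = \; g(\nabla \tau, \a'(s)) \; = \; g(-(-\nabla \tau), \a'(s)).
\]
Since $-\nabla \tau$ is future-timelike and $\a'(s)$ is future-causal, the standard reverse Schwarz inequality in Lorentzian signature gives $g(-\nabla \tau, \a'(s)) < 0$, hence $(\tau \circ \a)' > 0$. So $\tau$ is strictly increasing on future causal curves and is continuous (being smooth), hence a time function.

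Next, for $(a) \Rightarrow (c)$, I would use the classical Hawking argument. Given a time function $\tau$, the goal is to produce a Lorentzian metric $\tilde g$ with strictly wider causal cones than $g$, for which $\tau$ is still a generalized time function; this then rules out closed causal curves of $\tilde g$ and hence gives stable causality. The mechanism is to smooth $\tau$ via convolution in a locally finite atlas of convex neighborhoods, and choose the smoothing scale small enough, using compactness on a suitable exhaustion, to guarantee that the resulting smooth $\tilde \tau$ still increases along the future causal curves of a cone-widened metric $\tilde g$. Strong causality then follows since stable causality implies strong causality.

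The main obstacle is $(c) \Rightarrow (b)$, which is the smoothing theorem of Bernal and Sánchez cited as \cite{BStemp}. Here one must upgrade the mere existence of a continuous time function to a smooth function whose gradient is \emph{everywhere} past-timelike, and pointwise timelikeness cannot in general be achieved by a naive convolution. The plan would be: use stable causality to produce, by Geroch's volume-function construction, a continuous time function $t_0$; then exploit the stability of the causal cones to cover $M$ by a locally finite family of "temporal pyramids" $U_i$ on each of which one can build a local smooth temporal function $h_i$ (using the fact that small enough regions of stably causal spacetimes embed in globally hyperbolic ones where the Geroch time can be smoothed by hand); finally, assemble these $h_i$ through a partition of unity subordinate to $\{U_i\}$, choosing positive weights inductively large enough that the sum $\tau := \sum \l_i h_i$ has gradient dominated by the $h_i$-contributions on each $U_i$, guaranteeing $\nabla \tau$ is past-timelike globally. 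The delicate point in this last step is controlling the "error" gradients $(\nabla \l_i) h_i$ produced by derivatives of the partition-of-unity bump functions: these are spacelike in general and must be swamped by the timelike main terms, which is where the inductive choice of weights is essential.
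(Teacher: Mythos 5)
The paper does not prove this theorem: it is stated as background and attributed to \cite{HE}, \cite{BStemp}, and \cite{difftime}, so there is no internal proof to compare against. Your cyclic decomposition $(b)\Rightarrow(a)\Rightarrow(c)\Rightarrow(b)$ is the standard one, and your $(b)\Rightarrow(a)$ step is complete and correct: it is exactly the observation the authors make just before the theorem (a past-timelike gradient pairs positively with every nontrivial future causal vector, cf.\ Lemma \ref{pasttimelike}, so $\tau$ increases strictly along future causal curves). Your account of $(c)\Rightarrow(b)$ is a fair strategic summary of the Bernal--S\'anchez construction (local temporal functions glued by a partition of unity with inductively chosen weights so that the timelike main terms swamp the spacelike gradients of the bump functions), but it is a summary, not a proof; the existence of the local temporal functions and the convergence/domination estimates are the entire content of \cite{BStemp}.

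The one place where your sketch is actively misleading rather than merely incomplete is $(a)\Rightarrow(c)$. The ``smooth $\tau$ by convolution so that it remains a time function for a cone-widened metric'' argument is precisely the step in the classical Hawking--Ellis treatment that was later recognized as incomplete: mollifying a continuous time function need not produce a function that increases along the causal curves of any strictly wider metric, because a time function can have arbitrarily small ``causal rate of increase'' (compare the $\tau=t^3$ example in Proposition \ref{tcubed}, whose gradient degenerates on a slice). The modern rigorous proofs --- in particular the one in \cite{difftime}, which is one of the three sources cited here --- route this implication through a quantitative lower bound on the increase of $\tau$ along causal curves, i.e.\ essentially the locally anti-Lipschitz condition of Definition \ref{revLip} and Proposition \ref{antiiffrev} of this very paper. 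So if you intend $(a)\Rightarrow(c)$ to be more than a citation, you need to either supply that quantitative control before smoothing or cite the corrected argument; the naive convolution plan as written would fail.
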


\subsection{Local Causality}

As on a Riemannian manifold, we say an open set $U \subset M$ is \emph{(geodesically) convex} if it is an exponential normal neighborhood of each of its points. Hence, if $U$ is convex, then any two points $x, y \in U$ are connected by a unique geodesic in $U$. Each point in a spacetime admits arbitrarily small convex neighborhoods. 

The Lorentzian arc length functional of a spacetime $(M,g)$ is defined on the space of causal curves $\a : [a,b] \to M$ by $L_g(\a) := \int_a^b \sqrt{|g(\a', \a')|}ds$. We have the following (\cite{ON}):

\begin{prop}  [] \label{localcausality} Let $U$ be a convex neighborhood in a spacetime $M$. For each $p,q \in U$, let $\g_{pq}$ denote the unique geodesic in $U$ from $p$ to $q$.
\ben
\item [(1)] If there is a timelike (resp. causal) curve in $U$ from $p$ to $q$, then $\g_{pq}$ is timelike (resp. causal). 
\item [(2)] If $\g_{pq}$ is timelike, then $L_g(\g_{pq}) \ge L_g(\a)$, for all causal curves $\a$ in $U$ joining $p$ to $q$. Moreover, the inequality is strict unless, when suitably paramterized, $\a = \g_{pq}$.
\een
\end{prop}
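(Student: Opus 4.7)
\medskip

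\noindent\emph{Proof plan.} The plan is to reduce both parts to flat computations at $p$ via normal coordinates and the Gauss lemma. Since $U$ is convex, $\exp_p$ is a diffeomorphism from a star-shaped $\Omega \sbst T_pM$ onto $U$. Pick a $g_p$-orthonormal basis of $T_pM$ with future-pointing timelike direction, and use the induced normal coordinates $(x^0,\dots,x^n)$ on $U$. In these coordinates $g_{ij}(p) = \eta_{ij}$, $\Gamma^k_{ij}(p) = 0$, and the Gauss lemma takes the form $g_{ij}(x)\,x^j = \eta_{ij}\, x^j$ throughout $U$. For $q \in U$ with coordinates $(x^i)$, set $\t q := x^i\,\d_i|_p \in T_pM$; then $\g_{pq}(s) = \exp_p(s\,\t q)$ and $\g_{pq}$ has exactly the same causal character as $\t q$ in $(T_pM, g_p)$.

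\medskip

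\noindent\emph{Part (1).} Let $\a : [0,1] \to U$ be a future causal (resp. timelike) curve from $p$ to $q$, and define $\t \a(t) := x^i(\a(t))\,\d_i|_p$ and $f(t) := g_p(\t\a(t), \t\a(t))$. A direct computation gives $f'(t) = 2\,\eta_{ij}\,x^i(\a(t))\,\dot x^j(\a(t))$, which by the Gauss lemma equals $2\,g_{\a(t)}(P_{\a(t)}, \a'(t))$, where $P_x := x^i\,\d_i|_x$ is the ``radial position field'' (the unit-time velocity of $s \mapsto \exp_p(s\,\t x)$ at $x$). Wherever $\t \a(t)$ lies in the closed future causal cone $C_+ \sbst T_pM$, the vector $P_{\a(t)}$ is future causal at $\a(t)$, and by the reverse Schwarz inequality applied to the two future causal vectors $P_{\a(t)}$ and $\a'(t)$ we obtain $f'(t) \le 0$ (with strict inequality when $\a'$ is timelike, or when $P$ and $\a'$ are not null and parallel). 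Since $f(0) = 0$, this shows $f$ is nonincreasing on the set where $\t\a \in C_+$, i.e.\ $\t\a$ stays causal there.

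\medskip

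\noindent The remaining issue is to run a continuity argument showing $\t\a(t) \in C_+ \cup \{0\}$ for all $t \in [0,1]$. Taylor expansion at $0$, using $\t\a(0) = 0$ and $\t\a'(0) = \a'(0)$ future causal, places $\t\a(t)$ in $C_+$ for small $t > 0$. Let $T \sbst [0,1]$ be the (closed) set of $t$ such that $\t\a(s) \in C_+ \cup \{0\}$ for all $s \in [0,t]$. To show $T$ is open, hence equal to $[0,1]$: if $\t\a(t_0)$ is future timelike this is immediate; if $\t\a(t_0)$ is future null (necessarily nonzero), then by the computation above $f(t_0) = 0$ and $f'(t_0) \le 0$, and one analyzes the reverse-Schwarz equality case (where $\a'(t_0)$ and $P$ would both be null and proportional) together with a second-order expansion to rule out $\t\a$ crossing outward through the null cone. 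Future-pointing is preserved because the two connected components of $(C_+ \cup C_-) \ssm \{0\}$ are both closed. The timelike refinement is strict because $\a'$ is then strictly timelike throughout, forcing $f' < 0$ on $(0,1]$.

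\medskip

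\noindent\emph{Part (2).} When $\g_{pq}$ is timelike, $\t q$ is future timelike and $L_g(\g_{pq}) = \sqrt{-g_p(\t q, \t q)} =: r_q$. Given any causal $\a$ in $U$ from $p$ to $q$, Part (1) places $\t\a(t)$ in $C_+$, and since $\t q$ is timelike, $\t\a$ stays in the open future timelike cone on an end interval $[t_*, 1]$ (on an initial interval one first approximates $\a$ by a timelike curve and passes to a limit, or estimates the negligible contribution). On the timelike cone, write $\t\a(t) = r(t)\,\om(t)$ with $r(t) := \sqrt{-g_p(\t\a, \t\a)}$ and $g_p(\om, \om) = -1$. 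Using the Gauss lemma to split $\a'(t)$ into its ``radial'' component (pushed forward from $r'\om$) and a transverse component (pushed forward from $r\om'$, necessarily $g_p$-orthogonal to $\om$ and hence spacelike), one gets the pointwise inequality $\sqrt{|g(\a', \a')|} \le r'(t)$, with equality iff the transverse component vanishes, i.e.\ $\om$ is constant. Integrating yields $L_g(\a) \le r(1) - r(0) = r_q = L_g(\g_{pq})$, with equality iff $\a$ is a reparameterization of $\g_{pq}$.

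\medskip

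\noindent The main obstacle is the boundary-case analysis in Part (1): ruling out that $\t\a$ grazes the future null cone and then exits, precisely in the situation where the reverse Schwarz inequality is an equality. Everything else is an essentially mechanical application of the Gauss lemma and reverse-Schwarz, but that boundary case is the only place where the linearized picture at $p$ does not immediately decide the geometry away from $p$.
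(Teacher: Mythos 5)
The paper does not prove this proposition at all: it is quoted verbatim as a standard local result with a citation to O'Neill \cite{ON} (Ch.~5, Lemma~33 and Prop.~34, and the causal refinements in Ch.~10/14). So the comparison here is with the textbook proof, and your proposal does follow that proof's architecture (normal coordinates at $p$, the Gauss lemma identity $\tfrac{d}{dt}\,g_p(\t\a,\t\a)=2\,g(P,\a')$ with $P$ the radial position field, reverse Cauchy--Schwarz, and the polar decomposition $\t\a=r\,\om$ for part (2)). However, there is a genuine gap exactly where you flag one: the cone-invariance step in Part (1) for merely causal curves. Your open/closed argument breaks down when $\t\a(t_0)$ is future null, and the proposed fix --- ``a second-order expansion to rule out $\t\a$ crossing outward through the null cone'' --- is not substantiated and is not how the difficulty is actually resolved; a pointwise Taylor expansion at a single boundary contact cannot control the curve on an interval. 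The standard resolution is a monotonicity dichotomy on the maximal interval $[0,b]$ on which $\t\a$ remains in the closed future cone: there $f:=g_p(\t\a,\t\a)$ satisfies $f\le 0$, $f(0)=0$ and $f'\le 0$, so either $f(b)<0$ (then $\t\a(b)$ is in the \emph{open} future cone and $b$ was not maximal), or $f\equiv 0$ on $[0,b]$, which forces $g(P,\a')\equiv 0$, hence (by the equality case of reverse Schwarz) $\a'\parallel P$ with both null, i.e.\ $\a|_{[0,b]}$ is the radial null pregeodesic --- whose lift lies \emph{on} the cone by construction and whose continuation is again handled by the same dichotomy. You need to write this out; as it stands the crux of (1) is asserted, not proved.

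The same omission propagates into Part (2). Your polar decomposition $\t\a=r\om$ requires $\t\a(t)$ to lie in the open timelike cone, and your treatment of the initial (possibly null) segment --- ``approximates $\a$ by a timelike curve and passes to a limit, or estimates the negligible contribution'' --- does not deliver the \emph{strict} inequality and the equality characterization ($\a=\g_{pq}$ up to parameterization), which is part of the statement. The clean route is: if the causal competitor $\a$ is not everywhere timelike, the dichotomy above shows its initial portion is a radial null pregeodesic, contradicting $\t q$ timelike unless that portion is trivial; then on $(0,1]$ one has $r>0$, $r'\ge 0$ (from $f'\le 0$), and $\sqrt{|g(\a',\a')|}\le r'$ with equality iff the transverse component vanishes, which integrates to the desired strict maximality. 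Two minor points: you should record explicitly that $r'\ge 0$ (otherwise $\int|r'|\ge r(1)-r(0)$ goes the wrong way), and that $d\exp_p$ preserves the causal character and time-orientation of the radial field $P$ (so that ``$\t\a(t)\in C_+$'' really does make $P_{\a(t)}$ future causal in $T_{\a(t)}M$, which is what the reverse Schwarz step needs).
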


\subsection{Lorentzian Distance}

Note that by Proposition \ref{localcausality}, the causal geodesics of a spacetime are locally length-\emph{maximing}. The \emph{Lorentzian distance function} of a spacetime $(M,g)$ is defined by
$$d_g(p,q) := \sup \, \{ \,L_g(\a) : \a \mathrm{\; is \; future \; causal \; from \; } p \mathrm{\; to \; } q \, \},$$
where the supremum is taken to be 0 when there are no such curves. Hence, $d_g(p,q) = 0$ whenever $p \not \le q$. Note that, despite its name, Lorentzian `distance' is not a true distance function, as it fails to be definite, symmetric, and only satisfies the following `reverse triangle inequality':

\begin{prop} [Reverse Triangle Inequality]\label{revtriangle} 
\be
x \le y \le z \; \; \Longrightarrow \; \; d_g(x,y) + d_g(y,z) \le d_g(x,z) \nonumber
\ee
\end{prop}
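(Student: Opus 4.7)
The plan is a direct concatenation argument, relying on two basic facts: (i) the Lorentzian arc length is additive under concatenation, and (ii) the join of two future causal curves is again a (piecewise smooth) future causal curve of the kind admitted in the definition of $d_g$.

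First I would reduce to the case where both $d_g(x,y)$ and $d_g(y,z)$ are finite. If either is $+\infty$, say $d_g(x,y) = +\infty$, then for every $N>0$ there is a future causal $\alpha$ from $x$ to $y$ with $L_g(\alpha) > N$; concatenating with any future causal $\beta$ from $y$ to $z$ (which exists since $y \le z$) yields a future causal curve from $x$ to $z$ of length at least $N$, so $d_g(x,z) = +\infty$ and the inequality holds vacuously.

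Assuming both distances are finite, I would fix $\varepsilon > 0$ and, using the definition of $d_g$ as a supremum, select future causal curves $\alpha:[a,b]\to M$ from $x$ to $y$ and $\beta:[b,c]\to M$ from $y$ to $z$ (reparameterized so their domains abut at $b$) with
\[
L_g(\alpha) > d_g(x,y) - \tfrac{\varepsilon}{2}, \qquad L_g(\beta) > d_g(y,z) - \tfrac{\varepsilon}{2}.
\]
The concatenation $\gamma = \alpha * \beta : [a,c]\to M$ is piecewise smooth and future causal: on each piece it agrees with $\alpha$ or $\beta$, and at the corner $b$ both one-sided tangents are future-pointing causal, which is precisely what is allowed by the definition of a future causal curve given earlier in the paper. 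By additivity of the integral defining $L_g$,
\[
L_g(\gamma) = L_g(\alpha) + L_g(\beta) > d_g(x,y) + d_g(y,z) - \varepsilon.
\]
Since $\gamma$ is admissible in the supremum defining $d_g(x,z)$, we obtain $d_g(x,z) > d_g(x,y) + d_g(y,z) - \varepsilon$, and letting $\varepsilon \to 0$ gives the reverse triangle inequality.

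The only subtlety — and hence what I would treat as the main obstacle, though it is mild — is to check carefully that concatenating two piecewise smooth future causal curves yields another admissible curve for the supremum. This amounts to verifying that the corner at $y$ does not spoil future causality: each one-sided tangent inherits future-pointing causal character from $\alpha$ and $\beta$ respectively, so the join satisfies the definition of a future causal curve. Once this is noted, the rest of the argument is simply additivity of arc length and the definition of the supremum, and no further analytic input is needed.
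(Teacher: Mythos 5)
Your argument is correct and is the standard proof of the reverse triangle inequality: concatenate near-supremal future causal curves from $x$ to $y$ and from $y$ to $z$, note that the corner at $y$ is permitted by the paper's definition of a piecewise smooth future causal curve, and use additivity of $L_g$ together with the definition of the supremum (with the $+\infty$ case handled separately, as you do). The paper itself states this proposition without proof, as standard background from the cited references, so there is nothing to contrast; your write-up supplies exactly the expected argument.
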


\section{Null Distance}

For the remainder, $\tau$ will denote a generalized time function. Hence, we assume $\tau : M \to \field{R}$ increases strictly along future causal curves, but do not require $\tau$ to be continuous. 

\subsection{Definitions} \label{secDef}

In a spacetime, a `causal curve' is either future or past directed. We begin by extending this class of curves as follows:

\begin{Def} By a \emph{piecewise causal curve} $\b : [a, b] \to M$ we mean that there is a partition $a = s_0 < s_1 < ... < s_{k-1} < s_k = b$ such that the restriction of $\b$ to $[s_{i-1}, s_i]$ is either a smooth future or past causal curve. In particular, $\b$ is allowed to move backwards and forwards in time. We will sometimes use the notation $\b_i := \b|_{[s_{i-1}, s_i]}$, and $\b = \b_1 \cdot \b_2 \cdots \b_k$, where the dot denotes the natural concatenation of curves, (here from left to right). 
\end{Def}
\begin{figure} [h]
\begin{center}
\includegraphics[width=9cm]{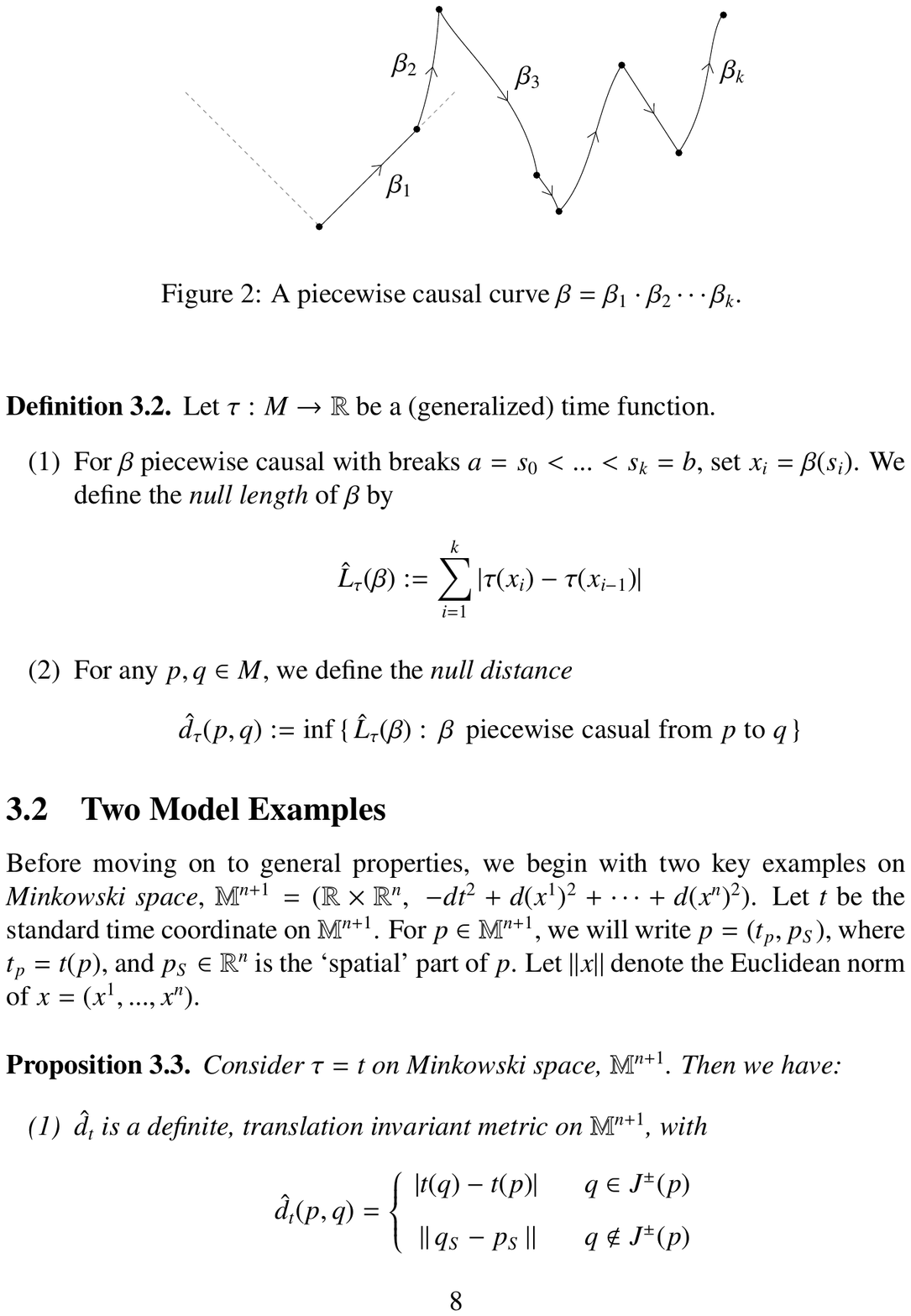}
\caption[...]{A piecewise causal curve $\b = \b_1 \cdot \b_2 \cdots \b_k$. \label{pwc}} 
\end{center}
\end{figure}

\begin{Def} \label{nulldistdef}Let $\tau : M \to \field{R}$ be a generalized time function. 
\ben
\item [(1)] For $\b$ piecewise causal with breaks $a = s_0 < ... < s_k = b$, set $x_i = \b(s_i)$. We define the \emph{null length} of $\b$ by
\be
\hat{L}_\tau(\b) := \sum_{i=1}^k |\tau(x_i) - \tau(x_{i-1})| \nonumber
\ee
\item [(2)] For any $p, q \in M$, we define the \emph{null distance}
\be
\dhat(p,q) := \inf \, \{ \, \hat{L}_\tau(\b) : \, \b \mathrm{\, \; piecewise \; causal \; from \;} p \mathrm{\; to\; } q \,\} \nonumber
\ee
\een
\end{Def}

\vspace{.5pc}
\subsection{Two Model Examples}

Before moving on to general properties, we begin with two key examples on \emph{Minkowski space}, $\field{M}^{n+1} = ( \field{R} \times \field{R}^n, \; -dt^2 + d(x^1)^2 + \cdots + d(x^n)^2 )$.
Let $t$ be the standard time coordinate on $\field{M}^{n+1}$. For $p \in \field{M}^{n+1}$, we will write $p = (t_p, p_S)$, where $t_p = t(p)$, and $p_S \in \field{R}^n$ is the `spatial' part of $p$. Let $||x||$ denote the Euclidean norm of $x = (x^1, ..., x^n)$.
\begin{prop} \label{Mink} Consider $\tau = t$ on Minkowski space, $\field{M}^{n+1}$. Then we have:

\ben
\item [(1)] $\hat{d}_t$ is a definite, translation invariant metric on $\field{M}^{n+1}$, with
\[
\hat{d}_t(p,q) = \left\{
        \begin{array}{ll}
            |t(q) - t(p)| & \; \; \; q \in J^\pm(p) \\ [.7em]
           \, || \, q_S - p_S\,|| & \; \; \; q \not \in J^\pm(p) \\
        \end{array}
    \right.
\]
\item [(2)] $\hat{d}_t$ encodes the causality of $\field{M}^{n+1}$ via:
$$p \le q \; \; \Longleftrightarrow \; \; \hat{d}_t(p,q) = t(q) - t(p)$$
\item [(3)] The $\hat{d}_t$-spheres are coordinate cylinders. For example, the null distance sphere of radius $r$ from the origin is the cylinder given by the equation: $\; \max\{|t|, ||x||\} = r$
\een
\begin{figure} [h]
\begin{center}
\includegraphics[width=5cm]{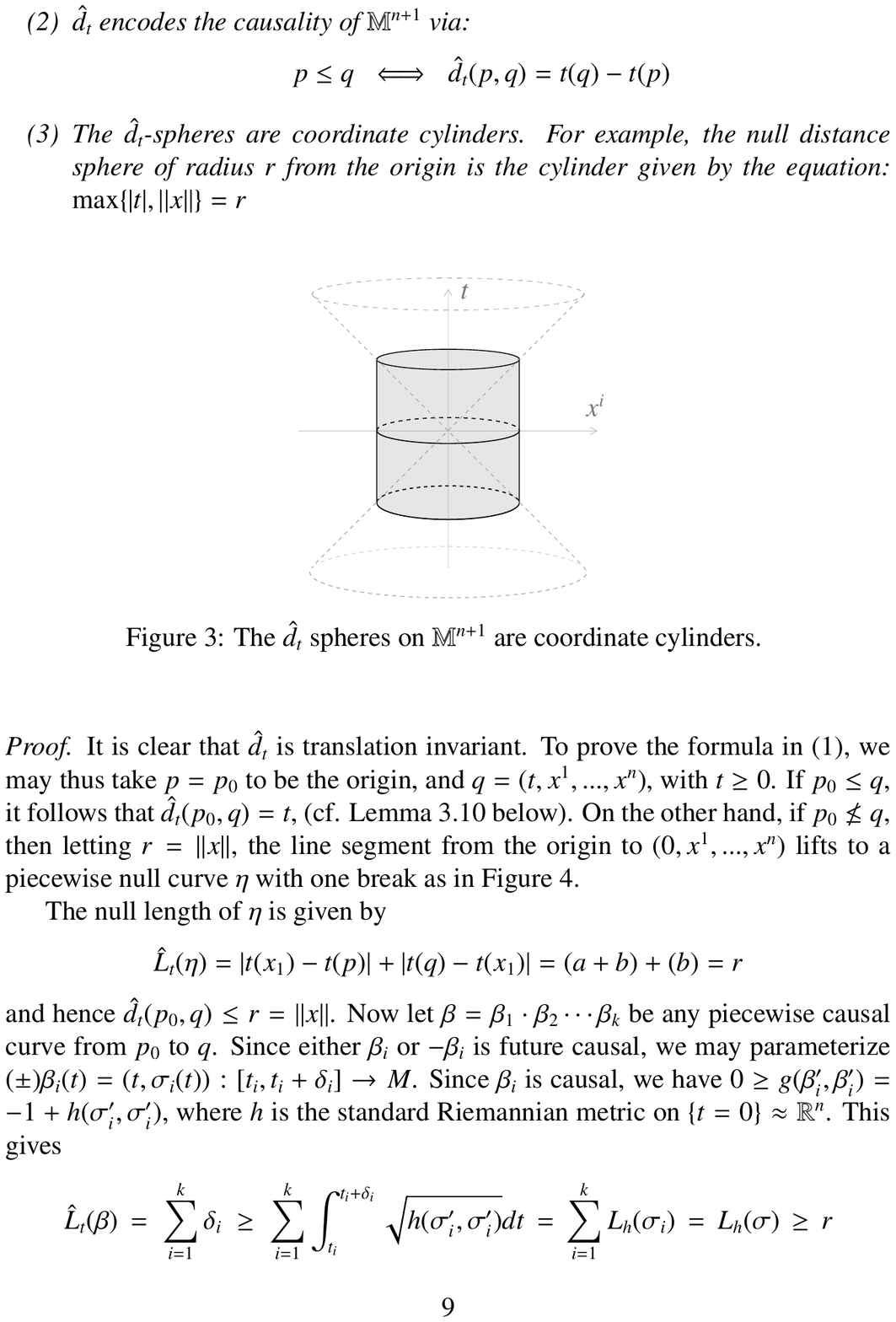}
\caption[...]{The $\hat{d}_t$ spheres on $\field{M}^{n+1}$ are coordinate cylinders. \label{nullball}} 
\end{center}
\end{figure}
\end{prop}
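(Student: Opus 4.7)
The plan is to establish matching lower and upper bounds on the null length of any piecewise causal curve from $p$ to $q$ in $\field{M}^{n+1}$, read off the closed form in (1), and then derive (2) and (3) as corollaries. Throughout, write $p = (t_p, p_S)$, $q = (t_q, q_S)$, $T := t_q - t_p$, and $D := \|q_S - p_S\|$.

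\emph{Lower bound.} Let $\beta = \beta_1 \cdots \beta_k$ be piecewise causal from $p$ to $q$ with breaks $p = x_0, x_1, \dots, x_k = q$. The triangle inequality on $\field{R}$ applied to the telescoping sum yields $\hat{L}_t(\beta) \geq |T|$. For the spatial bound, write $\beta_i(s) = (t(s), x(s))$ on each piece; since each $\beta_i$ is future- or past-causal, $\dot t$ has constant sign, and the causal condition gives $\|\dot x\| \leq |\dot t|$. Hence
\[
\|(x_i)_S - (x_{i-1})_S\| \;\leq\; \int \|\dot x\|\, ds \;\leq\; \int |\dot t|\, ds \;=\; |t(x_i) - t(x_{i-1})|.
\]
Summing and invoking the Euclidean triangle inequality gives $D \leq \hat{L}_t(\beta)$. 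Thus $\hat{d}_t(p,q) \geq \max(|T|, D)$.

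\emph{Upper bound.} If $q \in J^\pm(p)$, then $|T| \geq D$ and equation \eqref{causality} yields $\hat{d}_t(p,q) = |T|$ directly (the null length of any single future- or past-causal curve telescopes to $|T|$). If $q \notin J^\pm(p)$, then $D > |T|$; WLOG $T \geq 0$. Set $h := (D + T)/2 \geq T$ and choose the intermediate point $r := (t_p + h, \; p_S + (h/D)(q_S - p_S))$. A direct computation shows that $r - p$ is future null and $q - r$ is past null, so the concatenation $p \to r \to q$ is piecewise null with $\hat{L}_t(\beta) = h + (h - T) = D$. Hence $\hat{d}_t(p,q) \leq D$.

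Combining, $\hat{d}_t(p,q) = \max(|T|, D)$, which recovers (1) after splitting on whether $D \leq |T|$ or $D > |T|$. Symmetry, the triangle inequality, definiteness, and translation invariance are now immediate from the formula (translation invariance using that $T$ and $q_S - p_S$ are invariant under Minkowski translations; the triangle inequality also follows directly from concatenation of piecewise causal curves). For (2), the forward direction is \eqref{causality}; for the reverse, a short case check via (1) rules out both $q \leq p$ with $p \neq q$ and $q \notin J^\pm(p)$, leaving $p \leq q$. Part (3) is immediate from $\hat{d}_t(0, p) = \max(|t(p)|, \|p_S\|)$. The only genuinely creative step is producing the two-break minimizer in the spacelike case; once the intermediate point $r$ is written down as above, the rest of the verification is mechanical.
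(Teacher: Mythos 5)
Your proof is correct and follows essentially the same strategy as the paper's: a lower bound $\hat{L}_t(\beta)\ge\max(|T|,D)$ obtained by telescoping in $t$ and projecting each causal piece to the spatial slice via $\|\dot x\|\le|\dot t|$, and a matching upper bound from a two-segment piecewise null "tent" curve (the paper draws this curve as a lift of the spatial segment after translating $p$ to the origin, whereas you write the break point explicitly, but the construction is the same). The derivation of (2) and (3) from the resulting formula matches the paper as well.
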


\begin{proof} It is clear that $\hat{d}_t$ is translation invariant. To prove the formula in (1), we may thus take $p = p_0$ to be the origin, and $q = (t,x^1, ..., x^n)$, with $t \ge 0$. If $p_0 \le q$, it follows that $\hat{d}_t(p_0,q) = t$, (see Lemma \ref{dhatonacone} below). On the other hand, if $p_0 \not \le q$, then letting $r = ||x||$, the line segment from the origin to $(0, x^1, ..., x^n)$ lifts to a piecewise null curve $\eta$ with one break as in Figure \ref{minkproof}.

\vspace{.5pc}
\begin{figure} [h]
\begin{center}
\includegraphics[width=5.5cm]{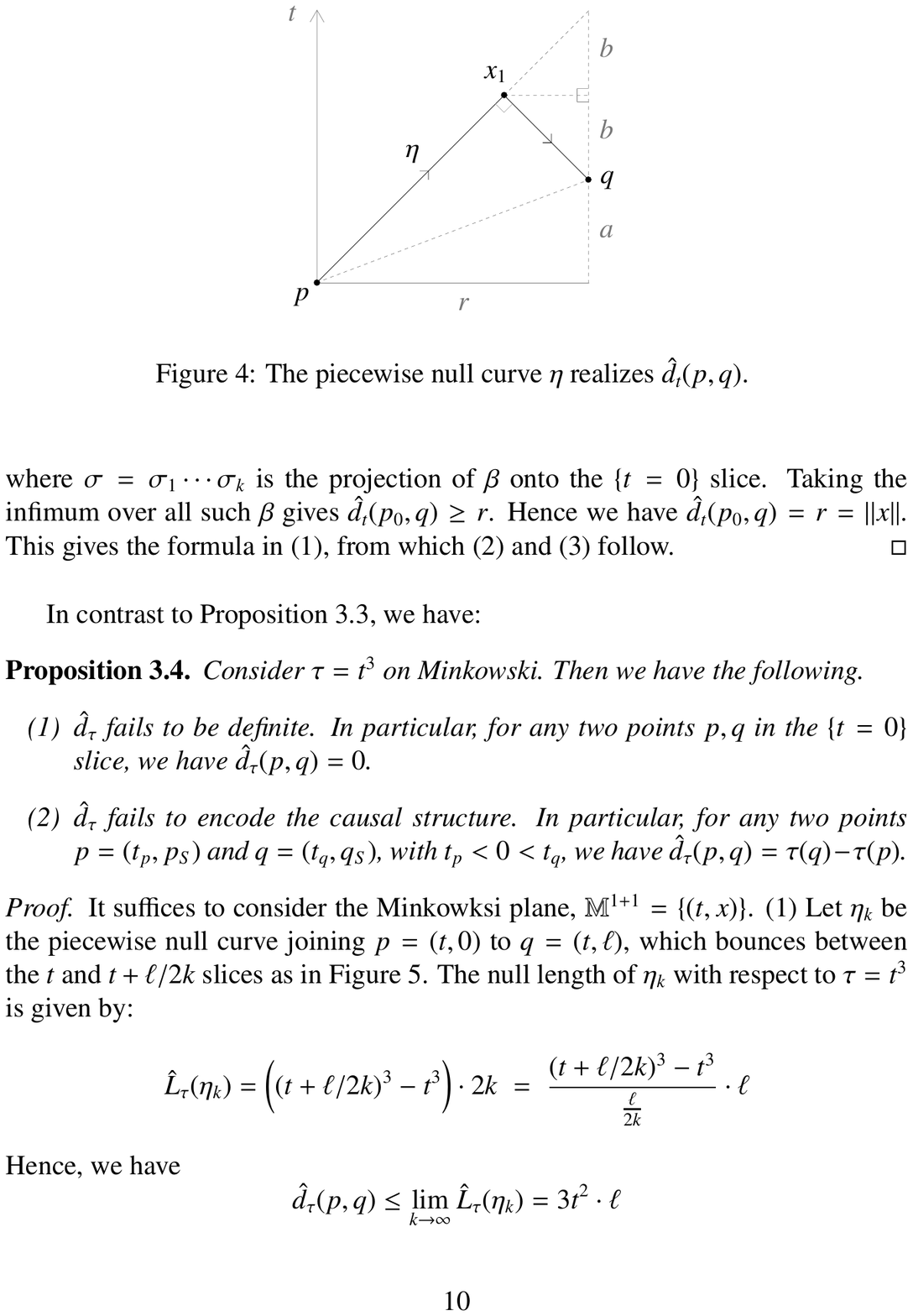}
\caption[...]{The piecewise null curve $\eta$ realizes $\hat{d}_t(p,q)$. \label{minkproof}} 
\end{center}
\end{figure}

The null length of $\eta$ is given by
\be
\hat{L}_t(\eta) = |t(x_1) - t(p)| + |t(q) - t(x_1)| = (a+b) + (b) = r \nonumber
\ee
and hence $\hat{d}_t(p_0,q) \le r = ||x||$. Now let $\b = \b_1 \cdot \b_2 \cdots \b_k$ be any piecewise causal curve from $p_0$ to $q$. Note that either $\b_i$ or $-\b_i$ is future causal. We may thus parameterize either $\b_i$ or its reverse as $(\pm) \b_i(t) = (t, \s_i(t)) : [t_i, t_i + \delta_i] \to M$, with $\delta_i \ge 0$, (where $(\pm) \b_i$ here denotes whichever is future-directed, either $\b_i$ or $-\b_i$). Since $\b_i$ is causal, we have $0 \ge g(\b_i', \b_i') = -1 + h(\s_i', \s_i')$, where $h$ is the standard Riemannian metric on $\{t = 0\} \approx \field{R}^n$. This gives
\be
\hat{L}_t(\b) \, = \; \sum_{i = 1}^k \delta_i \; \ge \;  \sum_{i = 1}^k \int_{t_i}^{t_i + \delta_i}\sqrt{h(\s_i', \s_i')}dt \, = \, \sum_{i = 1}^k L_h(\s_i) \, = \, L_h(\s) \, \ge \, r \nonumber
\ee
where $\s = \s_1 \cdots \s_k$ is the projection of $\b$ onto the $\{t=0\}$ slice. Taking the infimum over all such $\b$ gives $\hat{d}_t(p_0,q) \ge r$. Hence we have $\hat{d}_t(p_0,q) = r = ||x||$. This gives the formula in (1), from which (2) and (3) follow.
\end{proof}

\pagebreak
In contrast to Proposition \ref{Mink}, we have:

\begin{prop} \label{tcubed} Consider $\tau = t^3$ on Minkowski. Then we have the following. 
\ben
\item [(1)] $\hat{d}_\tau$ fails to be definite. In particular, for any two points $p, q$ in the $\{t = 0\}$ slice, we have $\hat{d}_\tau(p,q) = 0$.
\item [(2)] $\hat{d}_\tau$ fails to encode the causal structure. In particular, for any two points $p = (t_p, p_S)$ and $q = (t_q, q_S)$, with $t_p < 0 < t_q$, we have $\hat{d}_\tau(p,q) = \tau(q) - \tau(p)$.
\een
\end{prop}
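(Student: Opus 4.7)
The plan is to prove (1) via an explicit zigzag construction that exploits the vanishing of $\nabla \tau$ on the slice $\{t = 0\}$, and then to deduce (2) quickly from (1) together with the triangle inequality for $\hat{d}_\tau$ and the basic identity \eqref{causality}.

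For (1), fix $p = (0, p_S)$ and $q = (0, q_S)$, let $r = \|q_S - p_S\|$, and set $e = (q_S - p_S)/r$. For each small $\epsilon > 0$, I would construct a piecewise null curve $\b^\epsilon$ from $p$ to $q$ that zigzags between the levels $t = 0$ and $t = \epsilon$: each zigzag rises along a future null segment from $(0, p_S + 2i\epsilon e)$ to $(\epsilon, p_S + (2i+1)\epsilon e)$ (displacement $(\epsilon, \epsilon e)$, clearly null and future-directed), and then descends along a past null segment to $(0, p_S + (2i+2)\epsilon e)$. With $N = \lceil r/(2\epsilon)\rceil$ such zigzags the curve can be made to reach $q$ exactly, after a short adjustment in the last zigzag if $r$ is not an integer multiple of $2\epsilon$. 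The key point is that at every break $\tau = t^3$ takes only the value $0$ or $\epsilon^3$, so
\[
\hat{L}_\tau(\b^\epsilon) \;\le\; N \cdot \bigl(|\epsilon^3 - 0| + |0 - \epsilon^3|\bigr) \;\sim\; r\,\epsilon^2,
\]
which tends to $0$ as $\epsilon \to 0$; hence $\hat{d}_\tau(p,q) = 0$.

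For (2), with $t_p < 0 < t_q$, the lower bound $\hat{d}_\tau(p,q) \ge \tau(q) - \tau(p)$ is a general consequence of the definition of $\hat{L}_\tau$: for any piecewise causal $\b$ from $p$ to $q$ with breaks $x_0, \ldots, x_k$, the ordinary triangle inequality on $\field{R}$ gives $\hat{L}_\tau(\b) = \sum_i |\tau(x_i) - \tau(x_{i-1})| \ge |\tau(q) - \tau(p)| = \tau(q) - \tau(p)$. For the matching upper bound I would route through the intermediate points $p' := (0, p_S)$ and $q' := (0, q_S)$. Since $p \le p'$ and $q' \le q$ via vertical timelike segments, \eqref{causality} gives $\hat{d}_\tau(p, p') = \tau(p') - \tau(p) = |t_p|^3$ and $\hat{d}_\tau(q', q) = \tau(q) - \tau(q') = t_q^3$, while part (1) gives $\hat{d}_\tau(p', q') = 0$. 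Two applications of the triangle inequality for $\hat{d}_\tau$ then yield
\[
\hat{d}_\tau(p,q) \;\le\; \hat{d}_\tau(p, p') + \hat{d}_\tau(p', q') + \hat{d}_\tau(q', q) \;=\; |t_p|^3 + 0 + t_q^3 \;=\; \tau(q) - \tau(p).
\]

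The main obstacle is really confined to (1): one must verify that the zigzag segments are genuinely future/past null in Minkowski (automatic since each has $|\Delta t| = |\Delta x|$), and that the rounding error in the possibly shorter final zigzag contributes only $O(\epsilon^3)$, which is absorbed in the $\epsilon \to 0$ limit. Everything in (2) then reduces to bookkeeping with the triangle inequality for $\hat{d}_\tau$ and \eqref{causality}.
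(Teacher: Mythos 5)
Your proposal is correct and follows essentially the same route as the paper: part (1) via a piecewise null zigzag between the slices $t=0$ and $t=\epsilon$ whose null length is $O(r\epsilon^2)$ (the paper's $\eta_k$ with $\epsilon = \ell/2k$), and part (2) by routing through the points $(0,p_S)$, $(0,q_S)$ on the degenerate slice and applying the triangle inequality together with Lemma \ref{dhatonacone} and Lemma \ref{indefslice}. The only cosmetic difference is that the paper phrases the zigzag for a general slice $t$ (obtaining the bound $3t^2\ell$) before specializing to $t=0$.
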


\begin{proof} It suffices to consider the Minkowksi plane, $\field{M}^{1+1} = \{(t,x)\}$. (1) Let $\eta_k$ be the piecewise null curve joining $p = (t,0)$ to $q = (t,\ell)$, which bounces between the $t$ and $t + \ell/2k$ slices as in Figure \ref{tcubedproof}.
\begin{figure} [h]
\begin{center}
\includegraphics[width=10cm]{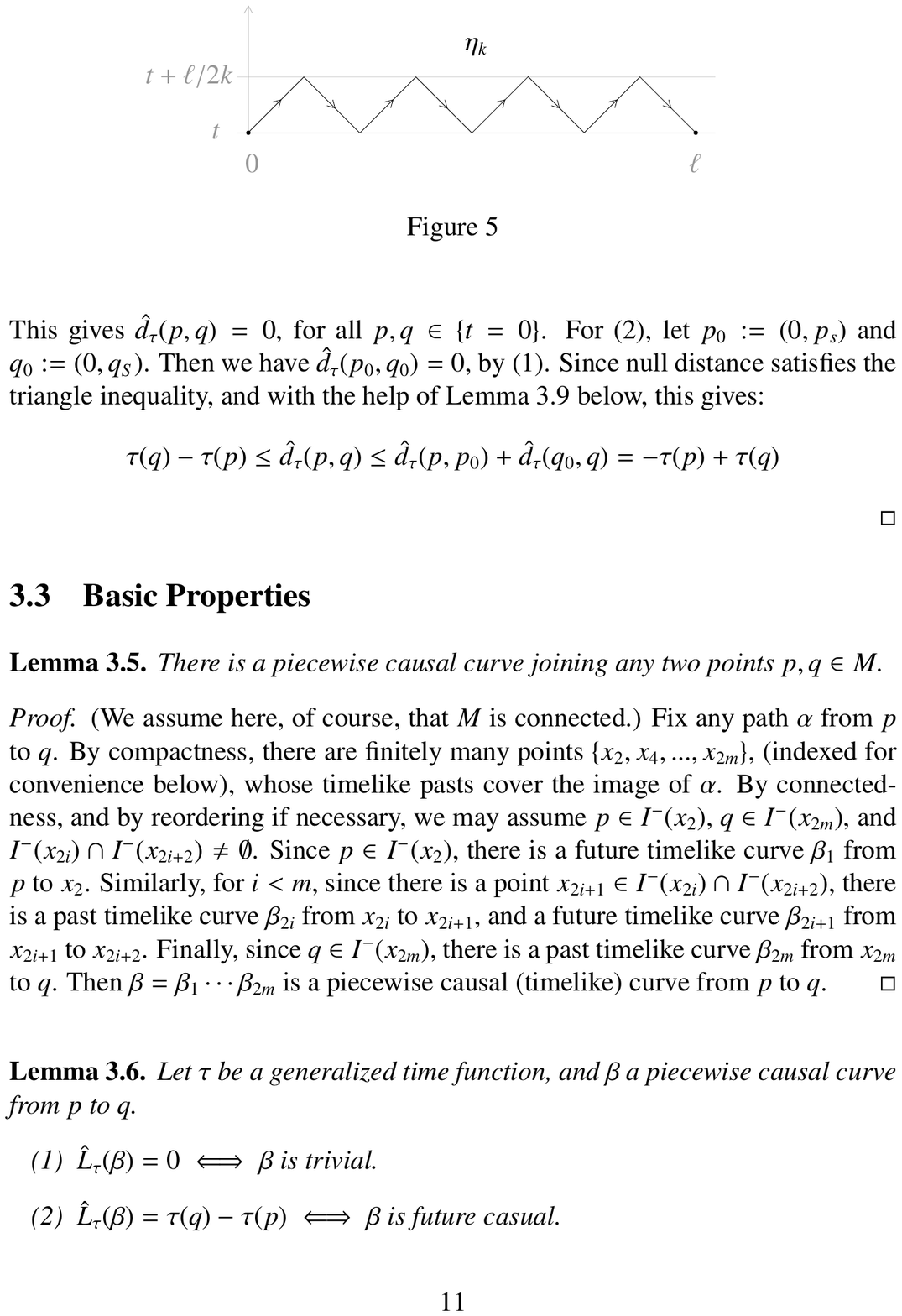}
\caption[...]{\label{tcubedproof}} 
\end{center}
\end{figure}
The null length of $\eta_k$ with respect to $\tau = t^3$ is given by:
\be
\hat{L}_\tau(\eta_k) =  \bigg((t+\ell/2k)^3 - t^3\bigg) \cdot 2k \; = \;  \dfrac{(t+\ell/2k)^3 - t^3}{\frac{\ell}{2k}}\cdot \ell \nonumber
\ee
Hence, we have 
\be
\hat{d}_\tau(p,q) \le \lim_{k \to \8} \hat{L}_\tau(\eta_k) = 3t^2 \cdot \ell \nonumber
\ee
This gives $\hat{d}_\tau(p,q) = 0$, for all $p, q \in \{t = 0\}$.
For (2), let $p_0 := (0, p_s)$ and $q_0 : = (0,q_S)$. Then we have $\hat{d}_\tau(p_0, q_0) = 0$, by (1). Since null distance satisfies the triangle inequality, and with the help of Lemma \ref{indefslice} below, this gives:
\be
\tau(q) - \tau(p) \le \hat{d}_\tau(p,q) \le \hat{d}_\tau(p,p_0) + \hat{d}_\tau(q_0,q) = - \tau(p) + \tau(q) \nonumber
\ee
\end{proof}

\subsection{Basic Properties}

\vspace{.5pc}
\begin{lem} \label{pwcdensity} There is a piecewise causal curve joining any two points $p, q \in M$. 
\end{lem}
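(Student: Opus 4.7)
The strategy is to fix $p \in M$ and show that the set
\[
A := \{\, q \in M : \text{there exists a piecewise causal curve from } p \text{ to } q\,\}
\]
is clopen and nonempty, so that by connectedness of $M$ it equals all of $M$.

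The key ingredient is a local claim: \emph{for every $q \in M$ there is an open neighborhood $V_q$ of $q$ such that every point of $V_q$ can be joined to $q$ by a piecewise causal curve (with at most one break)}. To prove this, I would choose a convex normal neighborhood $U$ of $q$ (which exists by standard Lorentzian local theory; cf.\ Proposition \ref{localcausality}), and pick a point $q^+ \in I^+(q) \cap U$ lying close enough to $q$ that the unique geodesic from $q$ to $q^+$ in $U$ is future timelike and stays in $U$. Since $q \ll q^+$ and $I^-(q^+)$ is open, the set $V_q := I^-(q^+) \cap U$ is an open neighborhood of $q$. For any $r \in V_q$, there is a future timelike curve $\alpha_1$ from $r$ to $q^+$ together with the past timelike segment $\alpha_2$ from $q^+$ to $q$; concatenating yields a piecewise causal curve from $r$ to $q$, and reversing gives one from $q$ to $r$.

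Given this local claim, $A$ is open: if $q \in A$ via some piecewise causal curve $\beta$ from $p$ to $q$, then for any $r \in V_q$, concatenating $\beta$ with a piecewise causal curve from $q$ to $r$ produces a piecewise causal curve from $p$ to $r$, so $V_q \subseteq A$. Dually, $A^c$ is open: if $q \notin A$ and some $r \in V_q$ belonged to $A$, concatenating a piecewise causal curve from $p$ to $r$ with one from $r$ to $q$ (obtained by reversing the corresponding curve from $q$ to $r$) would give $q \in A$, a contradiction; hence $V_q \subseteq A^c$. Since $p \in A$ (the constant curve is trivially piecewise causal) and $M$ is connected, $A = M$, completing the proof.

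The only real content is the local claim, and the main (minor) obstacle is making sure the piece of machinery invoked is honest: namely, that convex neighborhoods exist, that $I^\pm(q) \cap U$ is a nonempty open set near $q$, and that the operations of concatenation and reversal preserve the class of piecewise causal curves. All of these are immediate from the definition of piecewise causal given in the excerpt and from the standard local causal theory summarized in Section 2.4, so no substantial difficulty should arise.
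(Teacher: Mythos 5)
Your proof is correct, but it takes a different route from the paper's. You fix $p$ and show the set $A$ of points reachable from $p$ by a piecewise causal curve is nonempty, open, and closed, the local input being that each $q$ has a neighborhood $V_q = I^-(q^+)$ (for some $q^+ \gg q$) all of whose points connect to $q$ by a two-segment timelike zigzag through $q^+$; connectedness of $M$ then finishes it. The paper instead fixes an arbitrary continuous path $\a$ from $p$ to $q$, covers its compact image by finitely many timelike pasts $I^-(x_{2i})$, arranges these into a chain with consecutive overlaps, and explicitly concatenates future and past timelike segments through the $x_{2i}$ and points $x_{2i+1}$ in the overlaps. The two arguments rest on the same fact (openness of chronological pasts of nearby points), but the paper's version is more constructive --- it yields an explicit zigzag shadowing a given path, with a controlled number of breaks --- whereas yours is cleaner and sidesteps the mildly fiddly step of reordering the covering sets into a chain along the connected image of $\a$. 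Both correctly note (the paper explicitly, you implicitly via the trivial curve at $p$ and the connectedness hypothesis) that $M$ is assumed connected.
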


\begin{proof} (We assume here, of course, that $M$ is connected.) Fix any path $\a$ from $p$ to $q$. By compactness, there are finitely many points $\{x_2, x_4, ... , x_{2m}\}$, (indexed for convenience below), whose timelike pasts cover the image of $\a$. By connectedness, and by reordering if necessary, we may assume $p \in I^-(x_2)$, $q \in I^-(x_{2m})$, and $I^-(x_{2i}) \cap I^-(x_{2i+2}) \ne \emptyset$. Since $p \in I^-(x_2)$, there is a future timelike curve $\b_1$ from $p$ to $x_2$. Similarly, for $i < m$, since there is a point $x_{2i+1} \in I^-(x_{2i}) \cap I^-(x_{2i+2})$, there is a past timelike curve $\b_{2i}$ from $x_{2i}$ to $x_{2i+1}$, and a future timelike curve $\b_{2i+1}$ from $x_{2i+1}$ to $x_{2i+2}$. Finally, since $q \in I^-(x_{2m})$, there is a past timelike curve $\b_{2m}$ from $x_{2m}$ to $q$. Then $\b = \b_1 \cdots \b_{2m}$ is a piecewise causal (timelike) curve from $p$ to $q$. (See also Remark \ref{onlypwn} below.)
\end{proof}

\vspace{.5pc}
\begin{lem} \label{Lhat} Let $\tau$ be a generalized time function, and $\b$ a piecewise causal curve from $p$ to $q$. 
\ben
\item [(1)] $\hat{L}_\tau(\b) = 0 \; \Longleftrightarrow \; \b$ is trivial.
\item [(2)] $\hat{L}_\tau(\b) = \tau(q) - \tau(p) \; \Longleftrightarrow \; \b$ is future causal.
\item [(3)] In general, we have
\be
\hat{L}_\tau(\b) \ge \max_{y \in \b} \tau(y) - \min_{x \in \b} \tau(x) \ge |\tau(q) - \tau(p)| \nonumber
\ee
\item [(4)] If $\tau$ is differentiable along $\b : [a,b] \to M$, then $\hat{L}_\tau(\b) = \int_a^b |(\tau \circ \b)'| \;ds $.
\een
\end{lem}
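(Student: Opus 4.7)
My plan is to address the four parts separately, with the common thread being that the strict monotonicity of $\tau$ along nontrivial causal curves controls each term $|\tau(x_i) - \tau(x_{i-1})|$: its vanishing, its sign, and its interpretation as an integral.

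For part (1), I will argue that $\hat{L}_\tau(\b) = 0$ iff every term $|\tau(x_i) - \tau(x_{i-1})|$ is zero; since each $\b_i$ is a causal segment, the equality $\tau(x_{i-1}) = \tau(x_i)$ forces $\b_i$ to be trivial by the definition of generalized time function, so $\b$ is trivial. For part (2), the forward direction reduces to noting that a future causal $\b$ has each subsegment either future causal or trivial, so the absolute values drop and the sum telescopes to $\tau(q) - \tau(p)$. For the converse, I rewrite $\hat{L}_\tau(\b) = \tau(q) - \tau(p) = \sum_i (\tau(x_i) - \tau(x_{i-1}))$ and deduce that each term equals its own absolute value, so $\tau(x_i) \ge \tau(x_{i-1})$; if some $\b_i$ were a nontrivial past causal segment, strict monotonicity would give the strict reverse inequality, a contradiction, so every $\b_i$ is future causal (or trivial).

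For part (3), the preliminary observation is that, by strict monotonicity of $\tau$ along each smooth causal piece $\b_i$, intermediate values of $\tau$ on $\b_i$ lie between $\tau(x_{i-1})$ and $\tau(x_i)$. Consequently $\max_{\b}\tau$ and $\min_{\b}\tau$ are attained at break points, say at $x_j$ and $x_l$; the real-line triangle inequality applied to the partial sum between indices $\min(j,l)$ and $\max(j,l)$ then yields
\[
\hat{L}_\tau(\b) \;\ge\; \Bigl|\sum\nolimits_{i=\min(j,l)+1}^{\max(j,l)} (\tau(x_i) - \tau(x_{i-1}))\Bigr| \;=\; |\tau(x_j) - \tau(x_l)| \;=\; \max_{\b}\tau - \min_{\b}\tau.
\]
The second inequality is immediate, since $\tau(p)$ and $\tau(q)$ both lie in $[\min_{\b}\tau,\, \max_{\b}\tau]$.

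For part (4), assuming $\tau \circ \b$ is differentiable, on each subsegment $\b_i$ the composition $\tau \circ \b|_{[s_{i-1}, s_i]}$ is monotonic (nondecreasing if $\b_i$ is future causal, nonincreasing if past causal), so $(\tau \circ \b)'$ does not change sign there. Hence $\int_{s_{i-1}}^{s_i} |(\tau \circ \b)'|\,ds = |\tau(x_i) - \tau(x_{i-1})|$, and summing over $i$ reproduces $\hat{L}_\tau(\b)$. The only part that really required thought is (3), since the statement allows $\tau$ to be discontinuous; but the strict monotonicity along causal pieces pins down where the extrema are realized without any continuity assumption, so the obstacle dissolves once this observation is in hand.
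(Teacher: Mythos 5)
Your argument is correct and follows exactly the route the paper indicates: parts (1), (2), and (4) are read off from the definitions using strict monotonicity of $\tau$ along each causal piece, and part (3) rests on the observation that the extrema of $\tau\circ\b$ are attained at break points together with the triangle inequality on the intervening partial sum. You have simply written out the details the paper leaves as ``straightforward.''
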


\begin{proof} These are all straightforward from the definitions. (3) follows from the triangle inequality and the fact that the extrema of $\tau \circ \b$ must occur at break points of $\b$.
\end{proof}

\vspace{.5pc}
\begin{rem} \label{onlypwn} Fix any piecewise causal curve $\b$, from $p$ to $q$. It follows from Proposition 3.3 in \cite{FSwavetype}, that there is a broken null geodesic $\g$ from $p$ to $q$, which satisfies $\hat{L}_\tau(\g) = \hat{L}_\tau(\b)$, for any choice of generalized time function $\tau$. Consequently, we note that the null distance function $\hat{d}_\tau$ may just as well be constructed using only piecewise \emph{null} (geodesic) curves. Nonetheless, it is more convenient to use the full family of all piecewise causal curves. (But see Lemma \ref{minimizers} below.)
\end{rem}

\vspace{.5pc}
\begin{lem} \label{dhatpseudo} For any generalized time function $\tau$, the induced null distance $\hat{d}_\tau$ is a pseudometric on $M$, satisfying:
\ben
\item [(1)] $\hat{d}_\tau : M \times M \to [0, \8)$
\item [(2)] $\hat{d}_\tau(p,q) = \hat{d}_\tau(q,p)$
\item [(3)] $\hat{d}_\tau(p,q) \le \hat{d}_\tau(p,z) + \hat{d}_\tau(z,q)$
\item [(4)] $\hat{d}_\tau(p,p) =0$
\een
\end{lem}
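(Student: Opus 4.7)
The plan is to verify each of (1)--(4) directly from Definition \ref{nulldistdef}, using Lemma \ref{pwcdensity} to guarantee that the infimum defining $\hat{d}_\tau$ is always taken over a nonempty set, and observing that the class of piecewise causal curves is closed under reversal and concatenation. No deep input is needed; the only point requiring a moment's thought is that reversing a piecewise causal curve yields another piecewise causal curve with the same null length, and that finiteness of $\hat{d}_\tau$ is not automatic but follows from the existence result in Lemma \ref{pwcdensity}.

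For (1) and (4), first observe that $\hat{L}_\tau(\b) \geq 0$ for every piecewise causal $\b$, since it is a sum of absolute values, so $\hat{d}_\tau \geq 0$. By Lemma \ref{pwcdensity}, for any $p,q \in M$ there is at least one piecewise causal curve from $p$ to $q$, and any such curve has finite null length (being a finite sum of finite terms), so $\hat{d}_\tau(p,q) < \infty$. For (4), note that the trivial constant curve at $p$ is counted as (null, hence) causal and has $\hat{L}_\tau = 0$ directly from the definition, giving $\hat{d}_\tau(p,p) \leq 0$, and combined with non-negativity we obtain $\hat{d}_\tau(p,p) = 0$.

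For (2), suppose $\b = \b_1 \cdot \b_2 \cdots \b_k$ is piecewise causal from $p$ to $q$, with break points $x_0 = p, x_1, \ldots, x_k = q$. The reverse curve $\b^{-1}$, obtained by reversing the parameter on each segment, has each segment $(\b_i)^{-1}$ which is past causal if $\b_i$ was future causal and vice versa; either way $\b^{-1}$ is piecewise causal in the sense of Definition 3.1, now running from $q$ to $p$ with break points $x_k, x_{k-1}, \ldots, x_0$. Since the null length is defined purely in terms of $|\tau(x_i) - \tau(x_{i-1})|$, which is invariant under reversal of the ordering, we have $\hat{L}_\tau(\b^{-1}) = \hat{L}_\tau(\b)$. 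This sets up a bijection between piecewise causal curves from $p$ to $q$ and those from $q$ to $p$ that preserves null length, giving $\hat{d}_\tau(p,q) = \hat{d}_\tau(q,p)$.

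For (3), fix $p, z, q \in M$ and $\epsilon > 0$. Choose piecewise causal curves $\b$ from $p$ to $z$ and $\g$ from $z$ to $q$ with $\hat{L}_\tau(\b) < \hat{d}_\tau(p,z) + \epsilon/2$ and $\hat{L}_\tau(\g) < \hat{d}_\tau(z,q) + \epsilon/2$. Their concatenation $\b \cdot \g$ is piecewise causal from $p$ to $q$ (the partition for $\b \cdot \g$ is obtained by joining the partitions of $\b$ and $\g$ at $z$), and $\hat{L}_\tau(\b \cdot \g) = \hat{L}_\tau(\b) + \hat{L}_\tau(\g)$ directly from the definition, since the telescoping sum over break points of $\b \cdot \g$ splits into the sums for $\b$ and for $\g$. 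Hence $\hat{d}_\tau(p,q) \leq \hat{L}_\tau(\b \cdot \g) < \hat{d}_\tau(p,z) + \hat{d}_\tau(z,q) + \epsilon$, and letting $\epsilon \to 0$ gives the triangle inequality. The main (mild) obstacle is simply being careful that the piecewise causal class is preserved under the operations used; this is built into Definition 3.1.
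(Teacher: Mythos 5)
Your proof is correct and matches the paper's approach: the paper itself only remarks that finiteness follows from Lemma \ref{pwcdensity} and that the remaining properties ``follow easily from the definition,'' and your argument supplies exactly those routine verifications (non-negativity, reversal for symmetry, concatenation for the triangle inequality, and the trivial curve for $\hat{d}_\tau(p,p)=0$). No issues.
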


\vspace{.5pc}
Note that Lemma \ref{pwcdensity} ensures that $\hat{d}_\tau$ is finite. The rest of Lemma \ref{dhatpseudo} follows easily from the definition. Because the set of causal curves on a spacetime is a conformal invariant, the following is immediate:

\begin{prop} \label{conformal} Let $(M,g)$ be a spacetime. Fixing any generalized time function $\tau$ on $M$, the induced null distance $\hat{d}_\tau$ is a pseudometric on $M$, which is invariant under conformal changes of $g$.
\end{prop}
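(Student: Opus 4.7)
The pseudometric claim is just a restatement of Lemma \ref{dhatpseudo}, so I would simply invoke it and spend the proof on the conformal invariance. The plan is to observe that $\hat{L}_\tau(\b)$, as defined, depends only on $\tau$ evaluated at the break points $x_0, \ldots, x_k$ of $\b$, not on the Lorentzian metric $g$ at all. Thus the only way $g$ enters the definition of $\hat{d}_\tau$ is through the class of admissible curves, namely piecewise causal curves from $p$ to $q$.

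So the proof reduces to the standard conformal-invariance-of-causality fact: if $\tilde g = e^{2\f} g$ for some smooth $\f : M \to \bbR$, then $\tilde g(X,X) = e^{2\f}g(X,X)$, whence a tangent vector $X$ is timelike, null, or spacelike for $\tilde g$ iff it is so for $g$, and the future cone is preserved pointwise (time-orientation is unchanged because $e^{2\f} > 0$). Consequently, a piecewise smooth curve $\b$ is future (resp. past) causal for $g$ on each subinterval if and only if it is future (resp. past) causal for $\tilde g$ on each subinterval. Hence the set of piecewise causal curves from $p$ to $q$ is the same whether computed with $g$ or with $\tilde g$.

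Combining these two observations: for every piecewise causal $\b$ from $p$ to $q$ (a notion now unambiguous), $\hat{L}_\tau(\b)$ is literally the same number under either metric, so
\be
\hat{d}_\tau^{\,g}(p,q) \; = \; \inf_\b \hat{L}_\tau(\b) \; = \; \hat{d}_\tau^{\,\tilde g}(p,q), \nonumber
\ee
which is conformal invariance. I do not anticipate an obstacle here; the only small subtlety is making sure that the time-orientation (which is part of the spacetime structure) is also preserved by the conformal factor, but since $e^{2\f}$ is positive this is automatic, and the generalized time function $\tau$ itself does not depend on $g$ at all — being a generalized time function is a condition about increasing along future causal curves, and that class of curves is unchanged.
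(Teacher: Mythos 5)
Your proposal is correct and is exactly the argument the paper has in mind: the paper states the result as "immediate" from the fact that the set of causal curves is a conformal invariant, which is precisely the observation you spell out (together with deferring the pseudometric properties to Lemma \ref{dhatpseudo}). You have simply made explicit the details (null length depends only on $\tau$ at break points, positivity of the conformal factor preserves the cones and the time-orientation) that the paper leaves to the reader.
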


\vspace{.5pc}
Note that part (3) of Lemma \ref{Lhat} gives the following:

\begin{lem} \label{indefslice} For any generalized time function $\tau$ on $M$, and any $p, q \in M$,
\be
\hat{d}_\tau(p,q) \ge |\tau(q) - \tau(p)| \nonumber
\ee
Consequently, definiteness can only fail for points in the same  time slice:
\be
\hat{d}_\tau(p,q) =0 \implies \tau(p) = \tau(q) \nonumber
\ee
\end{lem}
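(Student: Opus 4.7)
The plan is to obtain the inequality by applying part (3) of Lemma \ref{Lhat} curve-by-curve and then passing to the infimum in the definition of $\hat{d}_\tau$. Specifically, for any piecewise causal curve $\b$ from $p$ to $q$, Lemma \ref{Lhat}(3) immediately gives
\be
\hat{L}_\tau(\b) \;\ge\; |\tau(q) - \tau(p)|. \nonumber
\ee
Since this lower bound depends only on the endpoints $p$ and $q$, and not on $\b$ itself, taking the infimum over all piecewise causal curves joining $p$ to $q$ (which exist by Lemma \ref{pwcdensity}) preserves the inequality, yielding $\hat{d}_\tau(p,q) \ge |\tau(q) - \tau(p)|$.

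For the consequence, suppose $\hat{d}_\tau(p,q) = 0$. Then by the inequality just established, $|\tau(q) - \tau(p)| \le 0$, which forces $\tau(p) = \tau(q)$. Equivalently, any two points with different $\tau$-values must have strictly positive null distance, so definiteness can only fail within a single level set of $\tau$.

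I expect no real obstacle here: the content of the lemma is a one-line consequence of Lemma \ref{Lhat}(3) together with the definition of $\hat{d}_\tau$ as an infimum. The only mild subtlety worth noting is that $\tau$ is not assumed continuous, so one cannot appeal to extremum arguments along arbitrary curves; however, Lemma \ref{Lhat}(3) already handles this by observing that along a piecewise causal $\b$, the max and min of $\tau \circ \b$ are attained at break points, where $\tau$ is evaluated directly and the absolute differences telescope via the triangle inequality to bound $|\tau(q) - \tau(p)|$ from below by $\hat{L}_\tau(\b)$.
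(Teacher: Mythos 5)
Your proof is correct and is exactly the paper's argument: the paper derives this lemma directly from Lemma \ref{Lhat}(3) by taking the infimum over piecewise causal curves. (One small slip in your closing remark: the triangle inequality bounds $\hat{L}_\tau(\b)$ from below by $|\tau(q)-\tau(p)|$, not the other way around, as you correctly state earlier.)
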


\vspace{.5pc}
\begin{lem} \label{dhatonacone} Null distance satisfies the following causality property:
\be
p \le q \; \Longrightarrow \; \hat{d}_\tau(p,q) = \tau(q) - \tau(p) \nonumber
\ee
\end{lem}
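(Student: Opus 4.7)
The plan is to sandwich $\hat{d}_\tau(p,q)$ between $\tau(q) - \tau(p)$ from above and from below, using results already established. Since $p \le q$, either $p = q$, in which case both sides vanish trivially, or there is a nontrivial future causal curve $\alpha$ from $p$ to $q$; in the latter case $\tau(q) > \tau(p)$ because $\tau$ is a generalized time function, so in either case $\tau(q) - \tau(p) \ge 0$ and $|\tau(q) - \tau(p)| = \tau(q) - \tau(p)$.

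First I would establish the upper bound. A future causal curve $\alpha$ from $p$ to $q$ is in particular a (one-piece) piecewise causal curve, so it is admissible in the infimum defining $\hat{d}_\tau(p,q)$. By Lemma \ref{Lhat}(2), $\hat{L}_\tau(\alpha) = \tau(q) - \tau(p)$, and hence
\[
\hat{d}_\tau(p,q) \;\le\; \hat{L}_\tau(\alpha) \;=\; \tau(q) - \tau(p).
\]

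For the lower bound, I would simply invoke Lemma \ref{indefslice}, which gives $\hat{d}_\tau(p,q) \ge |\tau(q) - \tau(p)|$ for any two points and any generalized time function; combined with the sign observation above, this yields $\hat{d}_\tau(p,q) \ge \tau(q) - \tau(p)$. Together with the upper bound, we conclude $\hat{d}_\tau(p,q) = \tau(q) - \tau(p)$.

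There is no real obstacle here: the entire statement is essentially a bookkeeping consequence of Lemma \ref{Lhat}(2) (which says future causal curves realize the signed $\tau$-difference) and Lemma \ref{indefslice} (which says no piecewise causal curve can do better than $|\Delta \tau|$). The only point worth a brief sanity check is the degenerate case $p = q$, where $\alpha$ may be taken to be the trivial constant curve (a null, hence causal, curve by our conventions) with $\hat{L}_\tau(\alpha) = 0 = \tau(q) - \tau(p)$.
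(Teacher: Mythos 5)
Your proof is correct and follows exactly the paper's argument: the upper bound comes from using a future causal curve from $p$ to $q$ as a competitor (via Lemma \ref{Lhat}(2)), and the lower bound is Lemma \ref{indefslice}. The extra care you take with the degenerate case $p=q$ and the sign of $\tau(q)-\tau(p)$ is harmless and consistent with the paper's conventions.
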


\begin{proof} Let $\b$ be any future causal curve from $p$ to $q$. Then $\hat{L}_\tau(\b) = \tau(q) - \tau(p)$. Hence, $\hat{d}_\tau(p,q) \le \tau(q) - \tau(p)$. Since the reverse inequality holds in general, by Lemma \ref{indefslice}, we have $\hat{d}_\tau(p,q) = \tau(q) - \tau(p)$.
\end{proof}

\vspace{.5pc}
Lemma \ref{dhatonacone} shows that null distance has a simple formula for causally related points. From the definition, it is tempting to expect naively that the converse of this should hold as well. Proposition \ref{tcubed} illustrates that this is not true in general. However, the converse does indeed hold for $\tau = t$ on Minkowski, as shown in Proposition \ref{Mink}, and this is further extended to more general warped product models in Theorem \ref{warpedthm} below. While the definiteness of null distance is addressed in Section \ref{secdefinite}, the ability of null distance to encode causality via some (local) form of \eqref{causalityencoded} is a question which remains to be explored in future work. For now, we note that this property is `stronger' than definiteness, as follows:

\begin{lem} \label{definiteifcausal} Let $M$ be a spacetime with generalized time function $\tau$ such that:
\be
p \le q \; \Longleftrightarrow \; \hat{d}_\tau(p,q) = \tau(q) - \tau(p) \nonumber
\ee
Then $\hat{d}_\tau$ is definite.
\end{lem}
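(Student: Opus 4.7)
\medskip

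\noindent\textbf{Proof plan.}
The plan is to show contrapositively, or directly, that any pair with vanishing null distance must coincide, by leveraging the hypothesized biconditional together with the defining property of a generalized time function.

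First I would suppose $\hat{d}_\tau(p,q) = 0$ and extract immediate consequences from earlier results. By Lemma \ref{indefslice}, we have $\tau(p) = \tau(q)$, so
\[
\hat{d}_\tau(p,q) \;=\; 0 \;=\; \tau(q) - \tau(p).
\]
The hypothesis then yields $p \le q$ via the ``$\Longleftarrow$'' direction of the assumed biconditional. Next, using the symmetry of the null distance (Lemma \ref{dhatpseudo}(2)), the same computation applied with the roles of $p$ and $q$ swapped gives $\hat{d}_\tau(q,p) = 0 = \tau(p) - \tau(q)$, and hence $q \le p$.

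The final step is to rule out the case $p \ne q$. If $p \ne q$, then any causal curve realizing $p \le q$ must be nontrivial, and so, since $\tau$ is a generalized time function, it increases strictly along this curve, forcing $\tau(p) < \tau(q)$; this contradicts $\tau(p) = \tau(q)$. Therefore $p = q$, as desired.

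The argument is essentially a direct unpacking of definitions once the symmetry of $\hat{d}_\tau$ and Lemma \ref{indefslice} are in hand, so I do not anticipate a genuine obstacle; the only subtle point is remembering to use both directions of the causal relation (via symmetry of $\hat{d}_\tau$) in order to invoke strict monotonicity of $\tau$ to conclude $p = q$, rather than being content with the weaker statement $p \le q$.
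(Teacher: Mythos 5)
Your proof is correct and follows essentially the same route as the paper: use Lemma \ref{indefslice} to get $\tau(p)=\tau(q)$, invoke the hypothesis to get $p\le q$, and then use strict monotonicity of $\tau$ along nontrivial causal curves to force $p=q$. The intermediate step deriving $q\le p$ via symmetry is harmless but unnecessary, since your final argument uses only $p\le q$ together with $\tau(p)=\tau(q)$.
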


\begin{proof} Let $p, q \in M$, with $\hat{d}_\tau(p,q) = 0$. By Lemma \ref{indefslice}, we have $\tau(p) = \tau(q)$, and hence by hypothesis, we have $p \le q$. But there can not be any nontrivial future causal curves from $p$ to $q$, otherwise $\tau(q) > \tau(p)$. Hence, $p = q$.
\end{proof}

\subsection{Topology}

\vspace{.5pc}
\begin{lem} \label{bdddiamonds} Let $\tau$ be a generalized time function. 
\ben
\item [(1)] $\tau$ is bounded on diamonds: $p \le x \le q \; \Longrightarrow \; \tau(p) \le \tau(x) \le \tau(q) $
\item [(2)] $\hat{d}_\tau$ is bounded on diamonds: $p \le x,y \le q \; \Longrightarrow \; \hat{d}_\tau(x,y) \le 2(\tau(q) - \tau(p)) $
\een
\end{lem}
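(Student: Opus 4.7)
The plan is to derive (1) directly from the monotonicity property defining a generalized time function, and then to prove (2) by exhibiting a specific piecewise causal curve from $x$ to $y$ that routes through $q$.

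For (1), I would argue as follows. If $p \le x$, then either $p = x$, in which case $\tau(p) = \tau(x)$ trivially, or there is a nontrivial future-directed causal curve from $p$ to $x$, along which $\tau$ is strictly increasing by the definition of a generalized time function, giving $\tau(p) < \tau(x)$. Either way $\tau(p) \le \tau(x)$. The same argument applied to $x \le q$ yields $\tau(x) \le \tau(q)$, completing part (1).

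For (2), I would form a piecewise causal curve $\b = \b_1 \cdot \b_2$ with a single break at $q$, where $\b_1$ is a future causal curve from $x$ to $q$ and $\b_2$ is the reverse of a future causal curve from $y$ to $q$ (i.e., a past causal curve from $q$ to $y$). Both exist by the hypotheses $x \le q$ and $y \le q$. Applying Definition \ref{nulldistdef} to this two-segment curve with breakpoints $x, q, y$,
\[
\hat{L}_\tau(\b) = |\tau(q) - \tau(x)| + |\tau(y) - \tau(q)|.
\]
By part (1), both $\tau(q) - \tau(x)$ and $\tau(q) - \tau(y)$ are nonnegative and bounded above by $\tau(q) - \tau(p)$, hence $\hat{L}_\tau(\b) \le 2(\tau(q) - \tau(p))$. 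Taking the infimum over all piecewise causal curves from $x$ to $y$ then yields the claimed inequality.

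No serious difficulty is anticipated: part (1) is immediate from the monotonicity built into the definition, and part (2) requires only constructing one admissible connector and estimating its null length using (1). The only minor check is that the concatenation $\b_1 \cdot \b_2$ is a legitimate piecewise causal curve with exactly the breakpoints $x, q, y$, which is transparent from the construction. (One could equally well route through $p$ instead of $q$, with the symmetric estimate.)
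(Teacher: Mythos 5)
Your proof is correct and follows the same route as the paper: part (1) directly from the strict monotonicity of $\tau$ along nontrivial future causal curves, and part (2) by concatenating a future causal curve from $x$ to $q$ with a past causal curve from $q$ to $y$ and bounding each term of the null length by $\tau(q)-\tau(p)$ via part (1).
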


\begin{proof} (1) follows directly from the definition. (2) Let $\b$ be a piecewise causal curve consisting of a future causal curve from $x$ to $q$, followed by a past causal curve from $q$ to $y$. Then as in (1) we have:
\be
\hat{L}_\tau(\b) = |\tau(q)-\tau(x)| + |\tau(y)-\tau(q)| \le 2(\tau(q) - \tau(p)) \nonumber
\ee
\end{proof}

\begin{prop} \label{dhatcont} $\hat{d}_\tau$ is continuous on $M \times M$ iff $\tau$ is continuous on $M$. 
\end{prop}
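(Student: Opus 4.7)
The plan is to handle the two implications separately. The forward direction falls out immediately from the indefiniteness bound in Lemma~\ref{indefslice}. For the reverse direction, the triangle inequality reduces continuity of $\hat{d}_\tau$ to a single one-point limit, which I would verify by building short piecewise causal detours through a nearby auxiliary point lying in $I^+(p_0)$.

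\emph{Forward direction.} Suppose $\hat{d}_\tau$ is continuous on $M \times M$. For any $q \in M$ and any sequence $p \to q$ in $M$, the pair $(p,q) \to (q,q)$, so $\hat{d}_\tau(p,q) \to \hat{d}_\tau(q,q) = 0$. By Lemma~\ref{indefslice},
\[
|\tau(p) - \tau(q)| \le \hat{d}_\tau(p,q) \longrightarrow 0,
\]
so $\tau$ is continuous at $q$.

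\emph{Reverse direction.} Suppose $\tau$ is continuous. The triangle inequality (Lemma~\ref{dhatpseudo}) yields the Lipschitz-type bound
\[
|\hat{d}_\tau(p,q) - \hat{d}_\tau(p_0,q_0)| \le \hat{d}_\tau(p,p_0) + \hat{d}_\tau(q,q_0),
\]
so continuity of $\hat{d}_\tau$ at an arbitrary $(p_0,q_0)$ reduces to proving $\hat{d}_\tau(p,p_0) \to 0$ whenever $p \to p_0$. To prove the latter, fix $\e > 0$ and pick $p_+ \in I^+(p_0)$ by flowing a short way along a future-directed timelike geodesic from $p_0$; by continuity of $\tau$ at $p_0$, this can be done so that $\tau(p_+) - \tau(p_0) < \e/3$. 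Since $I^-(p_+)$ is open and contains $p_0$, continuity of $\tau$ again lets us shrink to a neighborhood $V \subset I^-(p_+)$ of $p_0$ on which $|\tau - \tau(p_0)| < \e/3$. For any $p \in V$, concatenating a future timelike curve from $p$ to $p_+$ with a past timelike curve from $p_+$ to $p_0$ produces a piecewise causal curve whose null length is
\[
(\tau(p_+) - \tau(p)) + (\tau(p_+) - \tau(p_0)) < \tfrac{2\e}{3} + \tfrac{\e}{3} = \e,
\]
so $\hat{d}_\tau(p,p_0) < \e$.

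The step deserving the most care is the joint construction of $p_+$ and $V$: the single point $p_+$ must simultaneously lie in $I^+(p_0)$, be close enough to $p_0$ that $\tau(p_+) - \tau(p_0)$ is small, and admit an open past neighborhood of $p_0$ that sits inside a further neighborhood on which $|\tau - \tau(p_0)|$ is controlled. Once $p_+$ is chosen first and $V$ is then shrunk accordingly, everything else is a direct consequence of Lemma~\ref{indefslice}, the triangle inequality, and the openness of timelike pasts and futures, together with the fact that every point in a spacetime has a nonempty timelike future.
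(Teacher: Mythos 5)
Your proof is correct and follows essentially the same route as the paper: the reverse direction reduces to continuity at the diagonal via the triangle inequality and then bounds $\hat{d}_\tau(p,p_0)$ by an explicit two-segment timelike detour through a nearby future point, which is exactly the content of the paper's Lemma~\ref{bdddiamonds} applied to small diamonds. The only (harmless) variation is in the forward direction, where you invoke the inequality of Lemma~\ref{indefslice} rather than the paper's exact formula $\tau(x) = \hat{d}_\tau(p_0,x) + \tau(p_0)$ on $I^+(p_0)$ from Lemma~\ref{dhatonacone}; both are immediate.
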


\begin{proof} Suppose first that $\tau$ is continuous. Fix $x, y \in M$. Let $\a_x$ be a future timelike curve through $x = \a_x(0)$, and $\a_y$ a future timelike curve through $\a_y(0) = y$. Let $\delta > 0$ small enough so that $\a_x$ and $\a_y$ are defined on $[-\delta, \delta]$. Then for all $x' \in I^+(\a_x(-\delta)) \cap I^-(\a_x(\delta))$ and $y' \in I^+(\a_y(-\delta)) \cap I^-(\a_y(\delta))$, we have:
\begin{eqnarray}
|\hat{d}_\tau(x, y) - \hat{d}_\tau(x',y')| & \le & \hat{d}_\tau(x,x') + \hat{d}_\tau(y,y')     \nonumber \\[1em]
   & \le & 2 \bigg(\tau(\a_x(\delta)) - \tau(\a_x(-\delta))\bigg) + 2 \bigg(\tau(\a_y(\delta)) - \tau(\a_y(-\delta))\bigg) \nonumber
   \end{eqnarray}
\begin{figure} 
\begin{center}
\includegraphics[width=7.5cm]{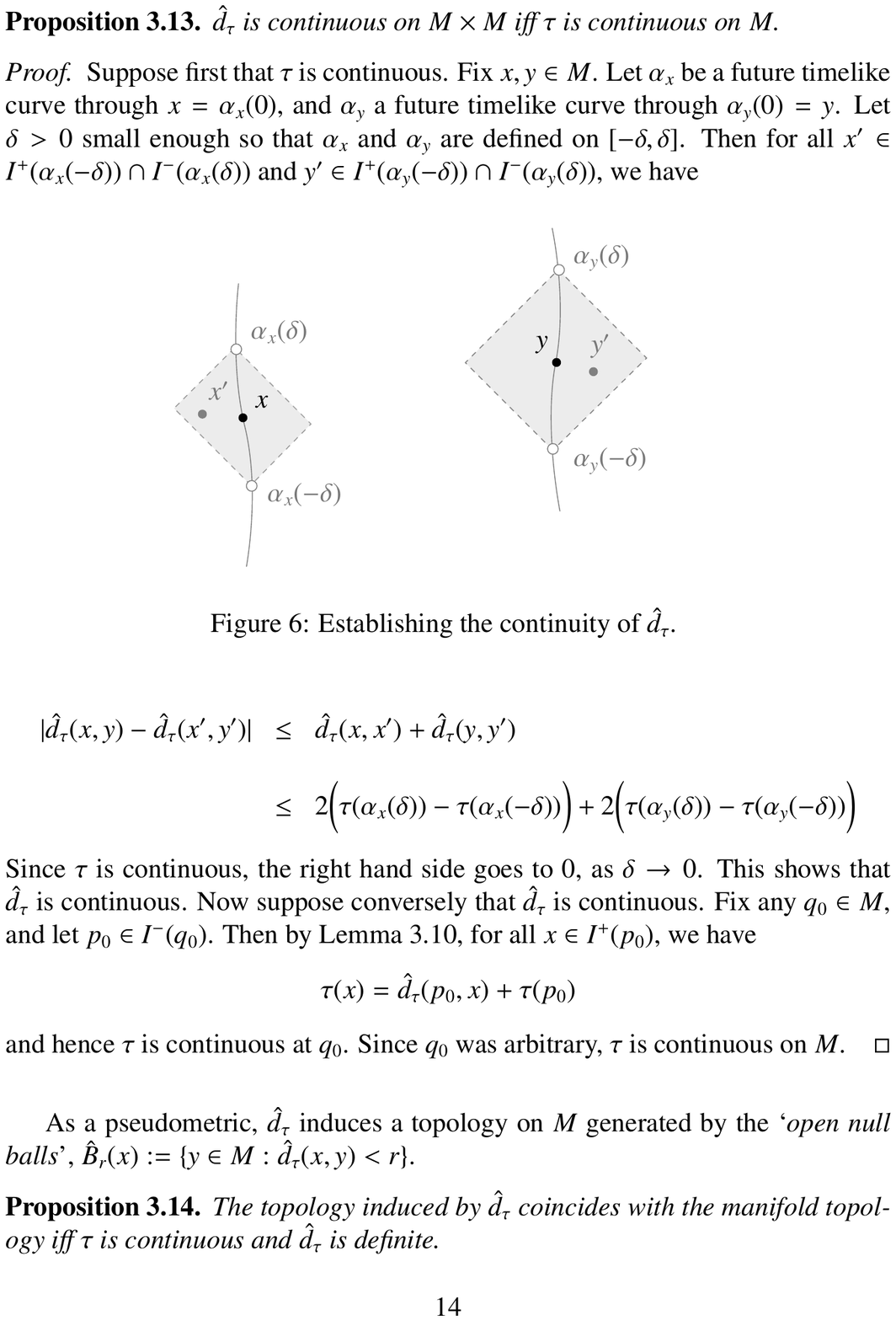}
\caption[...]{Establishing the continuity of $\hat{d}_\tau$. \label{continuity}} 
\end{center}
\end{figure}
Since $\tau$ is continuous, the right hand side goes to 0, as $\delta \to 0$. This shows that $\hat{d}_\tau$ is continuous. Now suppose conversely that $\hat{d}_\tau$ is continuous. Fix any $q_0 \in M$, and let $p_0 \in I^-(q_0)$. Then by Lemma \ref{dhatonacone}, for all $x \in I^+(p_0)$, we have
$$\tau(x)  = \hat{d}_\tau(p_0, x) + \tau(p_0)$$
and hence $\tau$ is continuous at $q_0$. Since $q_0$ was arbitrary, $\tau$ is continuous on $M$.
\end{proof}

\vspace{.5pc}
As a pseudometric, $\hat{d}_\tau$ induces a topology on $M$ generated by the `\emph{open null balls}', $\hat{B}_r(x) := \{ y \in M : \hat{d}_\tau(x,y) < r\}$.

\begin{prop} \label{topology} The topology induced by $\hat{d}_\tau$ coincides with the manifold topology iff $\tau$ is continuous and $\hat{d}_\tau$ is definite.
\end{prop}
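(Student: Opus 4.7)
The plan is to prove the two directions separately; the forward direction is essentially formal, while the reverse direction requires a nontrivial local argument built on the prior continuity result (Proposition \ref{dhatcont}).

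For the forward implication, suppose the null distance topology coincides with the manifold topology. Since any pseudometric is continuous with respect to its own induced topology, $\hat{d}_\tau$ is then continuous on $M \times M$ in the manifold topology, and Proposition \ref{dhatcont} yields continuity of $\tau$. Moreover, if $\hat{d}_\tau(x,y) = 0$ for some $x \neq y$, then $y$ would lie in every null ball $\hat{B}_r(x)$, hence in every manifold-open neighborhood of $x$, contradicting the Hausdorff property of $M$; so $\hat{d}_\tau$ must also be definite.

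For the reverse implication, assume $\tau$ is continuous and $\hat{d}_\tau$ is definite. Proposition \ref{dhatcont} gives that $\hat{d}_\tau$ is continuous in the manifold topology, so each null ball $\hat{B}_r(x)$ is manifold-open; this places the null distance topology inside the manifold topology. For the converse containment, fix $x \in M$ and a manifold-open neighborhood $V$ of $x$. By local compactness of the manifold, choose a compact $K$ with $x \in \text{int}(K) \subset K \subset V$. Then $\partial K$ is compact and does not contain $x$, so continuity of $\hat{d}_\tau(x, \cdot)$ and definiteness together imply that $c := \min_{z \in \partial K} \hat{d}_\tau(x,z)$ exists and is strictly positive. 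I claim $\hat{B}_c(x) \subset K \subset V$, which completes the proof.

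The technical step, which I expect to be the main obstacle, is verifying this claim. The idea is that if $y \in \hat{B}_c(x) \setminus K$ and $\b = \b_1 \cdots \b_k$ is any piecewise causal curve from $x$ to $y$ with $\hat{L}_\tau(\b) < c$, then by continuity of $\b$ there is a first exit point $z \in \partial K$, lying on some subsegment $\b_j$. The truncated curve from $x$ to $z$, obtained by restricting $\b_j$ at $z$, is again piecewise causal; and since $\tau$ is monotone along each causal piece, $|\tau(z) - \tau(x_{j-1})| \le |\tau(x_j) - \tau(x_{j-1})|$, so the null length of the truncation is at most $\hat{L}_\tau(\b) < c$. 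Infimizing over such $\b$ gives $\hat{d}_\tau(x,z) < c$, contradicting the choice of $c$. The real subtlety is in combining definiteness (to obtain a positive separation on compact $\partial K$) with the piecewise monotonicity of $\tau$ (to transfer a total null-length bound to any initial sub-path), so that the uniform obstruction on $\partial K$ genuinely prevents short piecewise causal escape from $K$.
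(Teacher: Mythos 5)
Your proposal is correct and follows essentially the same route as the paper: the forward direction via continuity of a pseudometric in its own topology plus the Hausdorff property, and the reverse direction via a compact boundary $\partial K$, the positive minimum $c$ of $\hat{d}_\tau(x,\cdot)$ there, and a first-exit-point truncation of any piecewise causal curve leaving $K$. Your explicit justification that the truncated final segment has null length at most that of the full segment (by monotonicity of $\tau$ along causal pieces) is a point the paper's proof passes over silently, so no issues.
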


\begin{proof} By Proposition \ref{dhatcont}, $\tau$ is continuous iff $\hat{d}_\tau$ is. Suppose first that the topology induced by $\hat{d}_\tau$ coincides with the manifold topology. Then $\hat{d}_\tau$, and hence $\tau$ is continuous. Moreover, since these topologies are Haudorff, $\hat{d}_\tau$ is necessarily definite. Conversely, suppose that $\tau$, and hence $\hat{d}_\tau$ are continuous, and that $\hat{d}_\tau$ is definite. Continuity of $\hat{d}_\tau$ is equivalent to its topology being coarser than the manifold topology. In other words, the `open null balls' are open in the manifold topology. Let $U$ be open in the manifold topology. Fix any $x_0 \in U$. Let $h$ be any Riemannian metric on $M$, $d_h$ its Riemannian distance function, and let $\e > 0$ so that $B = B^h_\e(x_0) := \{y \in M : d_h(x_0,y) < \e\} \subset \subset U$. Because $\hat{d}_\tau$ is definite and continuous, and $\d B$ compact, we have
\be
\e_0 : = \min_{z \in \d B} \hat{d}_\tau(x_0,z) > 0 \nonumber
\ee
Fix any $y_0 \not \in B$. Let $\b : [a, b] \to M$ be a piecewise causal curve from $x_0 = \b(a)$ to $y_0 = \b(b)$. Let $z_0 \in \d B$ be the first point at which $\b$ meets $\d B$. Let $\b_0 \subset \overline{B}$ denote the initial (shortest) portion of $\b$ which runs from $x_0$ to $z_0$. Then we have
\be
\hat{L}_\tau(\b) = \hat{L}_\tau(\b_0)\ge \e_0 \nonumber
\ee
Taking the infimum over all such curves $\b$, we have shown 
\be
y_0 \not \in B \; \; \Longrightarrow \; \; \hat{d}_\tau(x_0, y_0) \ge \e_0 \nonumber 
\ee
In other words, $\hat{B}_{\e_0}(x_0) \subset B \subset U$. Because $x_0 \in U$ was arbitrary, this shows $U$ is open in the null topology. Hence, the topologies coincide.
\end{proof}

\subsection{Scaling}

It follows from the definition that $\hat{d}_\tau$ scales with $\tau$:

\begin{lem} \label{scaling} Let $\tau$ be a generalized time function on a spacetime $(M,g)$. For any positive constant $\lambda >0$, we have $\hat{d}_{\lambda \tau} = \lambda \hat{d}_\tau$.
\end{lem}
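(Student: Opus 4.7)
The plan is essentially to chase the scaling factor through the two layers of the definition: first through $\hat{L}_\tau$ on a fixed piecewise causal curve, and then through the infimum defining $\hat{d}_\tau$.

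First I would note that $\lambda \tau$ is itself a generalized time function: since $\lambda > 0$, multiplication by $\lambda$ is order-preserving, so $\lambda \tau$ is still strictly increasing along nontrivial future-directed causal curves. Hence $\hat{d}_{\lambda \tau}$ is well-defined via Definition \ref{nulldistdef}, and the set of admissible curves $\b$ (piecewise causal from $p$ to $q$) is exactly the same as for $\hat{d}_\tau$, since this class depends only on the spacetime $(M,g)$ and not on the choice of time function.

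Next, for any piecewise causal curve $\b : [a,b] \to M$ with breaks $a = s_0 < \cdots < s_k = b$ and $x_i = \b(s_i)$, I would compute directly from Definition \ref{nulldistdef}(1),
\begin{equation}
\hat{L}_{\lambda \tau}(\b) = \sum_{i=1}^k |\lambda \tau(x_i) - \lambda \tau(x_{i-1})| = \lambda \sum_{i=1}^k |\tau(x_i) - \tau(x_{i-1})| = \lambda \hat{L}_\tau(\b), \nonumber
\end{equation}
where pulling $\lambda$ out of the absolute value uses $\lambda > 0$.

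Finally, taking infima over the (common) class of piecewise causal curves from $p$ to $q$, and using that positive scalar multiplication commutes with infimum, I would conclude
\begin{equation}
\hat{d}_{\lambda \tau}(p,q) = \inf_{\b} \hat{L}_{\lambda \tau}(\b) = \inf_{\b} \lambda \hat{L}_\tau(\b) = \lambda \inf_{\b} \hat{L}_\tau(\b) = \lambda \hat{d}_\tau(p,q). \nonumber
\end{equation}
There is no real obstacle here; the only point requiring any care is the preliminary remark that $\lambda \tau$ is a generalized time function, so that the statement $\hat{d}_{\lambda \tau} = \lambda \hat{d}_\tau$ is a comparison of two legitimately defined pseudometrics rather than a formal identity.
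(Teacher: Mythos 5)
Your proof is correct and follows exactly the route the paper intends: the paper states this lemma without proof, remarking only that it "follows from the definition," and your computation $\hat{L}_{\lambda\tau}(\b)=\lambda\hat{L}_\tau(\b)$ followed by passing to the infimum is precisely that argument, with the worthwhile preliminary check that $\lambda\tau$ is again a generalized time function.
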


Furthermore, we note:

\begin{lem} \label{unique} For any generalized time functions $\tau_1$, $\tau_2$ on $M$, we have:
\be
\hat{d}_{\tau_1} = \hat{d}_{\tau_2}  \; \Longleftrightarrow \; \tau_2 = \tau_1 + C \nonumber
\ee
\end{lem}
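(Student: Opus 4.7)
The plan is to prove the two directions separately, with the reverse implication following directly from the definition of $\hat{L}_\tau$, and the forward implication leveraging Lemma \ref{dhatonacone} together with Lemma \ref{pwcdensity}.

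For the ``$\Leftarrow$'' direction, suppose $\tau_2 = \tau_1 + C$ for some constant $C$. Then for any piecewise causal curve $\b$ with breaks $x_0, x_1, \ldots, x_k$, the constant $C$ cancels in each difference, giving
\be
\hat{L}_{\tau_2}(\b) = \sum_{i=1}^k |\tau_2(x_i) - \tau_2(x_{i-1})| = \sum_{i=1}^k |\tau_1(x_i) - \tau_1(x_{i-1})| = \hat{L}_{\tau_1}(\b). \nonumber
\ee
Taking the infimum over all piecewise causal $\b$ from $p$ to $q$ yields $\hat{d}_{\tau_2} = \hat{d}_{\tau_1}$.

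For the ``$\Rightarrow$'' direction, assume $\hat{d}_{\tau_1} = \hat{d}_{\tau_2}$. I first show that $f := \tau_2 - \tau_1$ is constant along each causal link. Indeed, for any $p \le q$, Lemma \ref{dhatonacone} applied to each $\tau_i$ gives $\tau_i(q) - \tau_i(p) = \hat{d}_{\tau_i}(p,q)$, and since these null distances agree by hypothesis, $\tau_1(q) - \tau_1(p) = \tau_2(q) - \tau_2(p)$, i.e., $f(p) = f(q)$.

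To pass from causally related pairs to arbitrary pairs, I invoke Lemma \ref{pwcdensity}: for any two points $p, q \in M$, there exists a piecewise causal curve $\b = \b_1 \cdots \b_k$ from $p$ to $q$ with breaks $x_0 = p, x_1, \ldots, x_k = q$. For each $i$, the consecutive breaks $x_{i-1}$ and $x_i$ are the endpoints of a single (future- or past-directed) causal subsegment $\b_i$, hence causally related, so the previous step gives $f(x_{i-1}) = f(x_i)$. Chaining these equalities yields $f(p) = f(q)$, and since $p, q$ were arbitrary, $f$ is constant on $M$, which is the desired conclusion.

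The main (and only nontrivial) step is recognizing that causal relation is enough to determine $f$ globally; this is immediate once Lemma \ref{pwcdensity} is in hand, so no real obstacle remains. The whole argument is a clean consequence of the two structural lemmas already established.
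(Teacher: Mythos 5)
Your proof is correct and follows essentially the same route as the paper: both directions are handled identically, with the forward implication obtained by applying Lemma \ref{dhatonacone} to each causal subsegment of a piecewise causal curve (whose existence is Lemma \ref{pwcdensity}) and chaining the resulting equalities of $\tau_2 - \tau_1$ across the breaks. No gaps.
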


\begin{proof} If $\tau_2 = \tau_1 + C$, then it is clear from the definition of null distance that $\hat{d}_{\tau_1} = \hat{d}_{\tau_2}$. Conversely, suppose that the null distances agree. Fix $p_1, p_2 \in M$, and let $\b : [a,b] \to M$ be a piecewise causal curve from $p_1 = \b(a)$ to $p_2 = \b(b)$, with breaks $a = s_0 < s_1 < \cdots s_k = b$. Let $x_i := \b(s_i)$ and $\b_i := \b|_{[s_{i-1}, s_i]}$. If $\b_i$ is future causal, then by Lemma \ref{dhatonacone}, we have
\be 
\tau_2(x_i) - \tau_2(x_{i -1}) = \hat{d}_{\tau_2}(x_{i -1},x_i) =  \hat{d}_{\tau_1}(x_{i -1},x_i) = \tau_1(x_i) - \tau_1(x_{i -1}) \nonumber
\ee
Hence, $(\tau_2 - \tau_1)(x_i) = (\tau_2 - \tau_1)(x_{i-1})$. Since the exact same equality holds when $\b_i$ is past causal, we get this equality for any pair of breaks $x_i$ and $x_j$, including $x_0 = p_1$ and $x_k = p_2$:
\be
(\tau_2 - \tau_1)(p_1) = (\tau_2 - \tau_1)(p_2) \nonumber
\ee
\end{proof}

\vspace{.5pc}
Combining Lemmas \ref{unique} and \ref{scaling}, we have the following.

\begin{prop} If $\tau_1, \tau_2$ are generalized time functions, and $\lambda \in (0,\infty)$, then:
$$\hat{d}_{\tau_2} = \lambda \hat{d}_{\tau_1}  \; \Longleftrightarrow \; \tau_2 = \lambda \tau_1 + C$$
\end{prop}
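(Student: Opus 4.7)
The plan is to treat both directions as straightforward consequences of Lemmas \ref{scaling} and \ref{unique}, noting first that for $\lambda > 0$ the function $\lambda \tau_1$ is itself a generalized time function (strict monotonicity along future causal curves is preserved by multiplication by a positive constant).

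For the forward direction, suppose $\tau_2 = \lambda \tau_1 + C$. Since $\lambda \tau_1$ and $\tau_2 = \lambda\tau_1 + C$ differ by a constant, Lemma \ref{unique} gives $\hat{d}_{\tau_2} = \hat{d}_{\lambda \tau_1}$, and then Lemma \ref{scaling} gives $\hat{d}_{\lambda \tau_1} = \lambda \hat{d}_{\tau_1}$. Chaining these yields $\hat{d}_{\tau_2} = \lambda \hat{d}_{\tau_1}$.

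For the converse, suppose $\hat{d}_{\tau_2} = \lambda \hat{d}_{\tau_1}$. Apply Lemma \ref{scaling} in reverse to rewrite the right side as $\hat{d}_{\lambda \tau_1}$, obtaining $\hat{d}_{\tau_2} = \hat{d}_{\lambda \tau_1}$. Since both $\tau_2$ and $\lambda \tau_1$ are generalized time functions, Lemma \ref{unique} applies and gives $\tau_2 = \lambda \tau_1 + C$ for some constant $C$.

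There is really no obstacle here; the statement is a formal packaging of the two preceding lemmas. The only point worth mentioning explicitly is the verification that $\lambda \tau_1$ is a generalized time function when $\lambda > 0$, which is needed to apply Lemma \ref{unique} in the converse direction.
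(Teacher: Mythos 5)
Your proof is correct and follows exactly the route the paper intends: the paper offers no separate argument beyond the remark that the proposition follows by ``combining Lemmas \ref{unique} and \ref{scaling},'' which is precisely the chaining you carry out in both directions. Your explicit note that $\lambda\tau_1$ is itself a generalized time function (so that Lemma \ref{unique} applies) is a small but worthwhile addition.
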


\subsection{Minimal Curves}

Let $\b$ be a piecewise causal curve from $p$ to $q$. We say $\b$ is \emph{minimal} if $\hat{d}_\tau(p,q) = \hat{L}_\tau(\b)$. The following is immediate from observations made above:

\begin{cor} \label{causalisminimal} Let $p \le q$. Then any causal curve $\a$ from $p$ to $q$ is minimal, with $\hat{d}_\tau(p,q) = \hat{L}_\tau(\a) = \tau(q) - \tau(p)$. In particular, causal curves are null distance-realizing.
\end{cor}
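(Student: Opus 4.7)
The corollary is essentially a bookkeeping consequence of two earlier results, so my plan is to simply assemble them. The main ingredient is Lemma \ref{dhatonacone}, which already gives the equality $\hat{d}_\tau(p,q) = \tau(q) - \tau(p)$ whenever $p \le q$. What remains is to verify that any given future causal curve $\a$ from $p$ to $q$ achieves this value as its null length, so that $\a$ is a minimizer rather than merely one of the competitors in the infimum.

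For that I would invoke Lemma \ref{Lhat}(2), which states precisely that a piecewise causal curve $\b$ from $p$ to $q$ satisfies $\hat{L}_\tau(\b) = \tau(q) - \tau(p)$ if and only if $\b$ is future causal. Since the given curve $\a$ is causal from $p$ to $q$ and $p \le q$, it is future causal, and thus $\hat{L}_\tau(\a) = \tau(q) - \tau(p)$. Chaining this with the formula from Lemma \ref{dhatonacone} gives the claimed equality
\[
\hat{L}_\tau(\a) = \tau(q) - \tau(p) = \hat{d}_\tau(p,q),
\]
which is exactly the definition of $\a$ being minimal.

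There is no real obstacle here; the only mild point to watch is the convention about what "causal curve from $p$ to $q$" means when $p \le q$ — it should be understood as future causal, which is consistent with the hypothesis $p \le q$ and with the way Lemma \ref{Lhat}(2) is stated. Given that the supporting lemmas have already been proved, the corollary follows in a line or two and need not be broken into further steps.
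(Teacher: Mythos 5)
Your proposal is correct and matches the paper exactly: the corollary is stated there as "immediate from observations made above," namely Lemma \ref{Lhat}(2) for $\hat{L}_\tau(\a) = \tau(q)-\tau(p)$ and Lemma \ref{dhatonacone} for $\hat{d}_\tau(p,q) = \tau(q)-\tau(p)$, which is precisely the assembly you give. Your side remark about the curve being future causal is the right convention check and causes no issue.
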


\vspace{.5pc}
The next result characterizes minimal piecewise causal curves, and helps explain the name `null distance'. Note that in general, a set $S \subset M$ is \emph{achronal} if no two points in $S$ are joined by a timelike curve. 

\begin{lem} \label{minimizers} Suppose that $\b$ is a minimal piecewise causal curve. Then either $\b$ is causal, or $\b$ is an achronal piecewise null geodesic, which changes direction in time at each break.
\end{lem}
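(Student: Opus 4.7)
The plan is to consider a minimal piecewise causal curve $\b$ and show that, after merging any adjacent same-direction pieces into a single piece (an operation preserving $\hat{L}_\tau$ and piecewise causality), either $\b$ consists of a single causal piece, or every break of $\b$ is a direction change and the two remaining properties hold: each piece is a null geodesic, and the image of $\b$ is achronal. Both properties will follow from a breakpoint-shortening principle, together with the basic observation, immediate from the definition of $\hat{d}_\tau$, that every sub-segment of a minimal curve is itself minimal between its endpoints.

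For the null-geodesic property, I would argue by contradiction. Suppose some piece $\b_i$, say future causal from $x_{i-1}$ to $x_i$, is not a null pregeodesic; then by a classical result (\cite{ON}, Prop. 10.46), $x_{i-1} \ll x_i$. In the multi-piece case, $x_i$ is a direction-change break, so the next piece $\b_{i+1}$ is past causal from $x_i$ to $x_{i+1}$. The goal is to produce a replacement $v'$ for the breakpoint $x_i$ with $\tau(v') < \tau(x_i)$, $v' \in I^+(x_{i-1})$, and $v' \in J^+(x_{i+1})$: if $\b_{i+1}$ is timelike at $x_i$, so $x_{i+1} \ll x_i$, take $v'$ in the open set $I^-(x_i) \cap I^+(x_{i-1}) \cap I^+(x_{i+1})$, which has $x_i$ on its boundary and is therefore nonempty arbitrarily close to $x_i$; if $\b_{i+1}$ is itself a null geodesic, push $v'$ slightly back along $\b_{i+1}$ toward $x_{i+1}$, which keeps $v'$ on the boundary of $J^+(x_{i+1})$ and inside the open set $I^+(x_{i-1})$ by continuity. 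In either case, the modified curve that agrees with $\b$ except for passing through $v'$ in place of $x_i$ is piecewise causal, with null length reduced by exactly $2(\tau(x_i) - \tau(v')) > 0$, contradicting minimality. An analogous argument at the other endpoint $x_{i-1}$ (or for past-causal $\b_i$) covers the remaining cases, so every piece is a null pregeodesic.

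For achronality, suppose $y = \b(s_y)$ and $z = \b(s_z)$ satisfy $y \ll z$. Since $\b|_{[s_y, s_z]}$ is a minimal piecewise causal curve from $y$ to $z$ and $y \le z$, Corollary \ref{causalisminimal} gives $\hat{L}_\tau(\b|_{[s_y, s_z]}) = \tau(z) - \tau(y)$; by Lemma \ref{Lhat}(2), $\b|_{[s_y, s_z]}$ must itself be future causal, which rules out any direction-change break between $s_y$ and $s_z$. Hence $y$ and $z$ lie on a single null-geodesic piece of $\b$, so they are null-related rather than timelike, contradicting $y \ll z$. The main obstacle is establishing existence of the replacement point $v'$ in the push argument when the adjacent piece $\b_{i+1}$ is itself null: the naive candidate inside $I^-(x_i)$ may fail to lie in $J^+(x_{i+1})$, and the resolution is to push along the null geodesic $\b_{i+1}$ rather than into the interior of the past light cone of $x_i$.
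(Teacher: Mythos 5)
Your argument for the null-geodesic property is essentially the paper's: shortcut a future-to-past break by sliding the breakpoint slightly down the adjacent past-causal piece $\b_{i+1}$ and joining to it with a timelike curve. (Your case division there is unnecessary and also not exhaustive --- $\b_{i+1}$ could be null at $x_i$ but timelike later, or a non-geodesic null curve --- but the ``push along $\b_{i+1}$'' device of your second case works verbatim for any past causal $\b_{i+1}$, so this is cosmetic.) The genuine gap is in the achronality step. Your reduction via Corollary \ref{causalisminimal} and Lemma \ref{Lhat}(2) correctly rules out $y \ll z$ when a direction change lies between them, but the final inference --- that $y$ and $z$ lying on a single null-geodesic piece forces them to be ``null-related rather than timelike'' --- is false in general: a null geodesic need not be achronal (beyond a conjugate point, or after wrapping around in a spacetime with nontrivial topology, a null geodesic reenters the chronological future of its earlier points). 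Only the converse is standard: $z \in J^+(y)\setminus I^+(y)$ forces every causal curve from $y$ to $z$ to be a null pregeodesic.

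The repair uses exactly the machinery you already set up, and is what the paper does: run the shortcut argument not merely at the endpoints $x_{i-1}, x_i$ of a putatively non-pregeodesic piece, but for an \emph{arbitrary} point $u_0$ of the piece adjacent to the turn, concluding that $\b(u_0) \le \b(s_i)$ but $\b(u_0) \not\ll \b(s_i)$ for every such $u_0$. This strengthened statement yields the null-pregeodesic conclusion by the same result of O'Neill you cite, \emph{and} gives achronality within each run: if $y \ll z$ on a future causal run ending in a future-to-past turn at $x_i$, then $y \ll z \le x_i$ gives $y \ll x_i$ by push-up, contradicting the strengthened statement. As written, your proof establishes that each piece is a null pregeodesic and that points on different runs are not chronologically related, but not that $\b$ itself is achronal.
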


\begin{proof} Note that by assumption, $\b : [a,b] \to M$ satisfies $\hat{L}_\tau(\b|_{[u,v]}) = \hat{d}_\tau(\b(u), \b(v))$, for all $[u,v] \subset [a,b]$. Let $a = s_0 < s_1 < ... < s_k = b$ be the breaks of $\b$. If $\b$ is not future or past causal, then we may assume there is a break point $s_i$ at which $\b$ changes from future to past causal. Hence, we have $\b_i = \b|_{[s_{i-1}, s_i]}$ future causal, and $\b_{i+1} = \b|_{[s_i, s_{i+1}]}$ past causal. Suppose that, for some $u_0 \in [s_{i-1}, s_i]$, we have $\b(u_0) \ll \b(s_i)$. Then, for some $v_0 \in (s_i, s_{i+1}]$, we have $\b(u_0) \ll \b(v_0)$. Letting $\a_0$ be a future timelike curve from $\b(u_0)$ to $\b(v_0)$, we have
\begin{eqnarray}
\hat{d}_\tau(\b(u_0), \b(v_0)) \; \le \; \hat{L}_\tau(\a_0)
   & = & \tau(\b(v_0)) - \tau(\b(u_0))  \nonumber\\[.5pc]
   & < & \tau(\b(s_i)) - \tau(\b(u_0)) + \tau(\b(s_i)) - \tau(\b(v_0)) \nonumber\\[.5pc]
   & = & \hat{L}_\tau(\b|_{[u_0,v_0]}) \nonumber
   \end{eqnarray}
But this contradicts our hypotheses. Hence, for all $u_0 \in [s_{i-1}, s_i]$, we have $\b(u_0) \le \b(s_i)$ but $\b(u_0) \not \ll \b(s_i)$. It follows from standard causal theory that $\b_i$ is a future null geodesic. By the same argument, $\b_{i+1}$ is a past null geodesic. To extend to the rest of $\b$, suppose, for example, that $a = s_0 < s_1 < s_i$. If $\b_{i-1}:= \b_{[s_{i-2},s_{i-1}]}$ is past causal, then the argument above shows that $\b_{i-1}$ is a past null geodesic. Suppose then that $\b_{i-1}$ is future causal. Then $\b(s_{i-2}) \le \b(s_i)$, but by an argument as above, $\b(s_{i-2}) \not \ll \b(s_i)$. Hence, $\b_{i-1} \cdot \b_i = \b|_{[s_{i-2}, s_i]}$ is a future null geodesic.  
\end{proof}

\vspace{.5pc}
Strictly speaking, we refer to null \emph{pregeodesics} above. Hence, Lemma \ref{minimizers} may be interpreted as a conformal statement. We note the following consequence:

\begin{cor} \label{timelikenotmin} Fix $p, q \in M$, with $p$ and $q$ not causally related, and let $\b$ be a piecewise causal curve from $p$ to $q$. If $\b$ has a timelike subsegment, then there is a shorter piecewise causal curve $\a$ from $p$ to $q$, i.e., $\hat{L}_\tau(\a) < \hat{L}_\tau(\b)$.
\end{cor}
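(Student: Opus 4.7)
The plan is to derive this directly from Lemma \ref{minimizers} by a quick proof by contradiction, using the definition of minimality together with the infimum characterization of $\hat{d}_\tau$.

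First I would observe that $\b$ cannot itself be a (future or past) causal curve from $p$ to $q$, since the existence of such a curve would force $p$ and $q$ to be causally related, contrary to our hypothesis. So $\b$ is piecewise causal but not causal.

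Next, suppose for contradiction that no shorter piecewise causal curve from $p$ to $q$ exists. Then $\b$ is minimal in the sense of the preceding definition, i.e.\ $\hat{L}_\tau(\b) = \hat{d}_\tau(p,q)$. Applying Lemma \ref{minimizers}, and using that $\b$ is not causal, $\b$ must be an achronal piecewise null geodesic. But $\b$ has a timelike subsegment by hypothesis, so the two endpoints of that subsegment are joined by a timelike curve lying on $\b$ itself; this directly contradicts achronality of the image of $\b$.

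Hence $\hat{L}_\tau(\b) > \hat{d}_\tau(p,q)$. By the very definition of $\hat{d}_\tau(p,q)$ as an infimum over all piecewise causal curves from $p$ to $q$ (Definition \ref{nulldistdef}), there exists some piecewise causal curve $\a$ from $p$ to $q$ with $\hat{L}_\tau(\a) < \hat{L}_\tau(\b)$, as required. I do not anticipate any obstacle: once Lemma \ref{minimizers} is in hand, the statement is essentially its contrapositive, packaged with the trivial fact that non-minimal length is strictly greater than the infimum.
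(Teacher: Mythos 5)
Your argument is correct and is exactly the derivation the paper intends: Corollary \ref{timelikenotmin} is stated as an immediate consequence of Lemma \ref{minimizers}, and your contrapositive reading (if $\b$ admitted no shorter competitor it would be minimal, hence causal or an achronal piecewise null geodesic, both of which are excluded by the hypotheses) is the intended proof. The final appeal to the infimum is slightly redundant, since refuting the contradiction hypothesis already yields the shorter curve directly, but it is harmless.
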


\subsection{Warped Products}

In this section, we consider warped product spacetimes of the form
$$(M^{n+1},g) = \bigg(I \times S^n, -dt^2 + f^2(t)h \bigg),$$
where $I \subset \field{R}$ is an open interval, $f : I \to (0,\8)$ is a smooth, positive function, and $(S^n,h)$ is a Riemannian manifold. Such spacetimes are also referred to as \emph{Generalized Robertson Walker (GRW) spacetimes}. For $p \in M$, we will write $p = (t_p,p_S)$. When considering $t$ as time function on $M$, we will write $t(p) = t_p$.

\vspace{1pc}
We begin by showing that the null distance induced by $\tau = t$ is definite. 

\begin{lem} \label{dhatwarp} Consider a warped product spacetime as above. Then the null distance function $\hat{d}_t$ induced by $\tau = t$ is definite.
\end{lem}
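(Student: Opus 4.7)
The plan is to mimic the argument used for Minkowski space in Proposition \ref{Mink}, with the extra step of localizing in time to handle the warping factor $f(t)$. Suppose $\hat{d}_t(p,q) = 0$. By Lemma \ref{indefslice}, $t(p) = t(q) = t_0$, so both points lie in the slice $\{t = t_0\}$ and may be written as $p = (t_0, p_S)$ and $q = (t_0, q_S)$. I want to show $p_S = q_S$.

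Assume toward contradiction that $p_S \ne q_S$, and let $d_h(p_S, q_S) > 0$ be the Riemannian distance in $(S^n, h)$. Fix a small $\epsilon > 0$ such that $[t_0 - \epsilon, t_0 + \epsilon] \subset I$ and set
\[
c := \min_{|t - t_0| \le \epsilon} f(t).
\]
Since $f$ is continuous and strictly positive, $c > 0$. Now pick any piecewise causal curve $\b$ from $p$ to $q$ with $\hat{L}_t(\b) < \epsilon$; by Lemma \ref{Lhat}(3), the image of $t \circ \b$ lies in $[t_0 - \epsilon, t_0 + \epsilon]$, so $f \ge c$ along $\b$.

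Break $\b = \b_1 \cdots \b_k$ into its causal pieces. On each piece, $t$ is strictly monotone, so I can reparameterize $\b_i$ (or its reversal) by $t$, writing it as $s \mapsto (s, \sigma_i(s))$ on some interval $[a_i, b_i]$. The causal condition $g(\b_i', \b_i') \le 0$ reads
\[
-1 + f^2(s)\, h(\sigma_i', \sigma_i') \le 0,
\]
which gives $f(s)\sqrt{h(\sigma_i', \sigma_i')} \le 1$. Integrating and using $f \ge c$,
\[
|\tau(\b(s_i)) - \tau(\b(s_{i-1}))| = b_i - a_i \ge \int_{a_i}^{b_i} f(s)\sqrt{h(\sigma_i', \sigma_i')}\, ds \ge c\, L_h(\sigma_i).
\]
Summing over $i$ and letting $\sigma = \sigma_1 \cdots \sigma_k$ be the projected path in $S^n$ from $p_S$ to $q_S$,
\[
\hat{L}_t(\b) \ge c \sum_{i=1}^k L_h(\sigma_i) = c\, L_h(\sigma) \ge c \cdot d_h(p_S, q_S).
\]
Thus every piecewise causal curve from $p$ to $q$ with null length less than $\epsilon$ satisfies $\hat{L}_t(\b) \ge c \cdot d_h(p_S, q_S) > 0$, so $\hat{d}_t(p,q) \ge \min\{\epsilon,\, c \cdot d_h(p_S, q_S)\} > 0$, contradicting $\hat{d}_t(p,q) = 0$.

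The main subtlety, and the only reason this is not a one-line consequence of Proposition \ref{Mink}, is that $f$ varies with $t$, so a global lower bound on $f$ need not exist. This is handled by using Lemma \ref{Lhat}(3) to a priori confine short curves to a compact $t$-interval around $t_0$, on which $f$ has a positive minimum. Everything else is the direct GRW analogue of the Minkowski projection argument.
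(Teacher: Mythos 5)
Your proof is correct and follows essentially the same route as the paper's: reduce to a common time slice via Lemma \ref{indefslice}, confine short curves to a compact $t$-slab where $f$ is bounded below, parameterize each causal piece by $t$, and compare null length to the $h$-length of the projected path in $S$. The only cosmetic difference is that you integrate $f\sqrt{h(\sigma_i',\sigma_i')}\le 1$ directly while the paper bounds $\sqrt{h(\sigma_i',\sigma_i')}\le 1/f\le 1/m_0$ first; these are equivalent.
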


\begin{proof} Let $p \ne q$. We want to show that $\hat{d}_t(p, q) > 0$. Since $\hat{d}_t(p,q) \ge |t(q) - t(p)|$ by Lemma \ref{indefslice}, it suffices to consider the case $t(p) = t(q) = t_0 \in I$. Fix $\delta > 0$ with $[t_0 - \delta, t_0 + \delta] \subset I$, and let $\b = \b_1 \cdot \b_2 \cdots \b_k$ be a piecewise causal curve from $p$ to $q$, with $\b \subset t^{-1}([t_0-\delta, t_0 + \delta])$. Note again that either $\b_i$ or $- \b_i$ is future causal, and hence we may parameterize either $\b_i$ or its reverse as a future causal curve, $(\pm) \b_i (t) = (t, \s_i(t)) : [t_i, t_i + \delta_i] \to M$, with $\s_i$ a smooth curve in $S$, and $[t_i, t_i + \delta_i] \subset [t_0 - \delta, t_0 + \delta]$. Since $\b_i$ is causal, we have 

$$g(\b_i'(t), \b_i'(t)) \le 0 \; \; \Longleftrightarrow \; \; h(\s_i'(t), \s_i'(t)) \le \frac{1}{f^2(t)}$$
Letting $m_0 := \min \{f(t) : t \in [t_0 + \delta, t_0 + \delta]\} > 0$, this gives
$$L_h(\s_i) = \int_{t_i}^{t_i + \delta_i} \sqrt{h(\s_i', \s_i')} \le \int_{t_i}^{t_i + \delta_i} \frac{1}{f(t)}dt \le  \dfrac{\delta_i}{m_0}$$
Since $\delta_i = \hat{L}_t(\b_i)$, this gives
$$\hat{L}_t(\b_i) \ge m_0 L_h(\s_i)$$
and hence,
$$\hat{L}_t(\b) = \sum_{i=1}^k \hat{L}_t(\b_i) \ge \sum_{i = 1}^k m_0 \, L_h(\s_i) = m_0 \, L_h(\s),$$
where $\s = \s_1 \cdot \s_1 \cdots \s_k$ is a piecewise smooth curve in $S$, joining $p_S$ to $q_S$. Since $p \ne q$, but $t(p) = t(q)$, we have $p_S \ne q_S$, and hence we have 
$$\hat{L}_t(\b) \ge  m_0 \, L_h(\s) \ge m_0 \, d_h(p_S, q_S) > 0$$
This shows that, for any piecewise causal curve $\a$ joining $p$ to $q$, we have:
\begin{displaymath}
   \hat{L}_t(\a) \ge \left\{
     \begin{array}{lr}
       \delta &  \a \not \subset t^{-1}([t_0 - \delta, t_0 + \delta])\\[.5pc]
       m_0 \, d_h(p_S, q_S) & \; \; \a \subset t^{-1}([t_0 - \delta, t_0 + \delta])
     \end{array}
   \right.
\end{displaymath} 
Consequently, we have $\hat{d}_t(p,q) \ge \min \{\delta, m_0 \, d_h(p_S, q_S)\} > 0$. 
\end{proof}

\vspace{.5pc}
We will need the following before advancing to Lemma \ref{dhatwarpiscausal}.

\begin{lem} \label{warpedcones} Consider a warped product spacetime $M = I \times_f S$ as above, with $(S,h)$ complete. We have the following:
$$p \le q \; \; \Longleftrightarrow \; \; d_h(p_S,q_S) \le \int_{t_p}^{t_q}\dfrac{1}{f(t)}dt$$
\end{lem}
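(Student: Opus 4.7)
The plan is to parameterize any future causal curve by its $t$-coordinate, thereby converting the causality condition into a pointwise speed bound on the $S$-part of the curve. Both directions of the equivalence then follow from this reduction, with the reverse direction additionally invoking Hopf--Rinow. The key algebraic observation is that a curve of the form $\beta(t) = (t,\sigma(t))$ in the warped product satisfies
$$g(\beta',\beta') \;=\; -1 + f^2(t)\,h(\sigma'(t),\sigma'(t)),$$
so $\beta$ is causal if and only if $|\sigma'(t)|_h \le 1/f(t)$.

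For the forward direction, let $\alpha$ be a future causal curve from $p$ to $q$. Since $t$ is a time function, $t\circ\alpha$ is strictly increasing on each smooth piece, so I reparameterize by $t$ to write $\alpha(t) = (t,\sigma(t))$ on $[t_p, t_q]$ piecewise smoothly. The causality bound $|\sigma'(t)|_h \le 1/f(t)$ then integrates to
$$d_h(p_S, q_S) \;\le\; L_h(\sigma) \;=\; \int_{t_p}^{t_q} |\sigma'(t)|_h\, dt \;\le\; \int_{t_p}^{t_q} \frac{dt}{f(t)}.$$

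For the reverse direction, assume $d_h(p_S, q_S) \le \int_{t_p}^{t_q} 1/f(t)\, dt$. Since $(S,h)$ is complete, Hopf--Rinow yields a unit-speed minimizing geodesic $\sigma\colon [0, D] \to S$ from $p_S$ to $q_S$, where $D := d_h(p_S, q_S)$. In the generic case $D > 0$, the hypothesis forces $t_p < t_q$; I set
$$c \;:=\; \frac{D}{\int_{t_p}^{t_q} du/f(u)} \;\in\; (0,1], \qquad s(t) \;:=\; c\int_{t_p}^{t} \frac{du}{f(u)},$$
and define $\beta(t) = (t, \sigma(s(t)))$ on $[t_p, t_q]$. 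Then $\beta(t_p) = p$ and $\beta(t_q) = q$, and a direct computation using unit-speed of $\sigma$ gives $g(\beta',\beta') = -1 + c^2 \le 0$, so $\beta$ is future causal and $p \le q$. The degenerate case $D = 0$ (so $p_S = q_S$) reduces either to $p = q$, when $t_p = t_q$, or to the vertical curve $t \mapsto (t, p_S)$, which is manifestly future timelike, when $t_p < t_q$.

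The argument presents no serious obstacle; the only step requiring care is the choice of the scaling constant $c$ in the reverse direction, where the hypothesis is used precisely to ensure $c \le 1$ so that the lifted curve is causal. Completeness of $(S,h)$ enters only to supply the minimizing geodesic $\sigma$ via Hopf--Rinow, and could in principle be weakened to mere existence of a minimizer between the specific points $p_S$ and $q_S$.
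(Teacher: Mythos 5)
Your proof is correct and takes essentially the same approach as the paper: reparameterize by $t$, reduce causality to the pointwise speed bound $|\sigma'|_h \le 1/f(t)$, and use completeness (Hopf--Rinow) to supply the minimizing geodesic in the reverse direction. The only difference is cosmetic: where the paper concatenates a null lift of $\sigma$ with a vertical timelike segment, you uniformly scale the lift by $c \le 1$ to obtain a single smooth causal curve --- both constructions rest on the same idea.
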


\begin{proof} The first part is similar to Lemma \ref{dhatwarp}. If $p \le q$, then there is a future causal curve $\b$ from $p$ to $q$ which can be parameterized with respect to $t$, with $\b(t) = (t, \s(t))$, for $t \in [t_p,t_q]$. That $\b$ is causal means 
$$0 \ge g(\b', \b') = -1 + f^2(t)h(\s',\s')$$
Hence, we have
$$d_h(p_S,q_S) \le L_h(\s) = \int_{t_p}^{t_q}\sqrt{h(\s', \s')}dt \le \int_{t_p}^{t_q} \dfrac{1}{f(t)}dt$$
Suppose on the other hand that $d:= d_h(p_S,q_S) \le \int_{t_p}^{t_q}\frac{1}{f(t)}dt$. Let $\s : [0, d] \to S$ be a minimal, unit-speed geodesic in $S$, from $p_S$ to $q_S$. Consider the function
$$\phi(t) := \int_{t_p}^t \dfrac{1}{f(s)}ds$$
Note that $\phi$ is increasing, and by assumption, there is a $t_d \in [t_p, t_q]$ such that $\phi(t_d) = d$.
Define a curve $\b : [t_p, t_q] \to M$ by
\begin{displaymath}
   \b(t) = \left\{
     \begin{array}{lr}
      (t \; , \; \s(\phi(t))) & \hspace{3pc}  t_p \le t \le t_d\\[.5pc]
        (t \; , \; q_S ) & t_d \le t \le t_q
     \end{array}
   \right.
\end{displaymath} 
It is straightforward to check that $\b$ is a causal curve from $p$ to $q$. 
\end{proof}

\vspace{.5pc}
\begin{lem} \label{dhatwarpiscausal} Consider a warped product spacetime $M = I \times_f S$ as above, with $(S,h)$ complete. Then the null distance induced by $\tau = t$ satisfies:
$$p \le q \; \Longleftrightarrow \; \hat{d}_t(p,q) = t(q) - t(p)$$
\end{lem}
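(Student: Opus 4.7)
The forward implication is immediate from Lemma \ref{dhatonacone}. For the converse, suppose $\hat{d}_t(p,q) = t(q)-t(p)$. Nonnegativity of $\hat{d}_t$ forces $t_p \le t_q$, and the equality case $t_p = t_q$ collapses to $p = q$ by the definiteness already proved in Lemma \ref{dhatwarp}. In the remaining case $t_p < t_q$, my plan is to verify the characterization provided by Lemma \ref{warpedcones}, namely
\[
d_h(p_S, q_S) \le \int_{t_p}^{t_q} \frac{1}{f(t)}\, dt.
\]

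I would pick a minimizing sequence $\b^k$ of piecewise causal curves from $p$ to $q$ with $\hat{L}_t(\b^k) \to t_q - t_p$, and split $\hat{L}_t(\b^k) = T^+_k + T^-_k$ according to whether each subsegment is future or past causal. Since $T^+_k - T^-_k = t_q - t_p$, the hypothesis forces $T^-_k \to 0$. Any excursion of $\b^k$ above $t_q$ or below $t_p$ must be matched by a backtrack contributing to $T^-_k$, so each $\b^k$ is confined to the slab $t^{-1}([t_p - T^-_k, t_q + T^-_k])$. For $k$ large the curves therefore lie in a fixed compact subinterval of $I$, on which $1/f$ is bounded by some constant $M$.

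Next, as in the proof of Lemma \ref{dhatwarp}, I would parameterize each subpiece $\b^k_i$ (or its reverse) by $t$ in the form $(t, \s^k_i(t))$ over an interval $[a_i, b_i]$, and use causality to obtain $L_h(\s^k_i) \le \int_{a_i}^{b_i} \frac{1}{f(t)}\, dt$. Concatenating, the projection $\s^k$ is a piecewise smooth curve in $S$ from $p_S$ to $q_S$ satisfying
\[
L_h(\s^k) \le \int_I \frac{N^k(t)}{f(t)}\, dt,
\]
where $N^k(t) := \sum_i \mathbf{1}_{[a_i, b_i]}(t)$ counts how many subpieces of $\b^k$ cover the level $t$. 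By the intermediate value theorem, $N^k \ge \mathbf{1}_{[t_p, t_q]}$, and since $\int_I N^k = \hat{L}_t(\b^k)$, the excess satisfies $\int_I(N^k - \mathbf{1}_{[t_p, t_q]})\, dt = 2T^-_k \to 0$. Splitting the integral over $[t_p, t_q]$ and its complement, and using $1/f \le M$ on the support of $N^k$, then yields $L_h(\s^k) \le \int_{t_p}^{t_q} 1/f(t)\, dt + 2M T^-_k$. Since $d_h(p_S, q_S) \le L_h(\s^k)$ for every $k$, letting $k \to \infty$ delivers the required inequality, and Lemma \ref{warpedcones} concludes $p \le q$.

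The main obstacle is handling piecewise causal curves that oscillate in time and excurse beyond $[t_p, t_q]$: the Banach-indicatrix accounting above, coupled with the vanishing backward variation $T^-_k \to 0$, is what tames those contributions. Everything else is a direct repackaging of the causality estimate already used in Lemma \ref{dhatwarp}.
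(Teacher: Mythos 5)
Your proposal is correct and follows essentially the same route as the paper: reduce to the inequality $d_h(p_S,q_S) \le \int_{t_p}^{t_q} f^{-1}\,dt$ via Lemma \ref{warpedcones}, project near-minimal piecewise causal curves to $S$, and bound the excess length by $(\hat{L}_t(\b)-(t_q-t_p))$ times a local bound on $1/f$. Your counting function $N^k$ is just a more formal version of the paper's ``extra time wiggles'' bookkeeping, and your $T^-_k$ confinement argument replaces the paper's appeal to Lemma \ref{Lhat}(3); both are sound.
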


\begin{proof} Recall that `$\Longrightarrow$' is true in general, as in Lemma \ref{dhatonacone}. Suppose then that $\hat{d}_t(p,q) = t(q) - t(p) = t_q - t_p$, for some $p, q \in M$. Fix $\delta > 0$ such that $[t_p - \delta, t_q + \delta] \subset I$, and let $m_0 : = \min \{f(t) : t \in [t_p-\delta, t_q + \delta]\}$. For $\e \in (0,\delta)$, let $\b$ be a piecewise causal curve from $p$ to $q$ with $\hat{L}_t(\b) \le t_q - t_p + \e$. Note that by Lemma \ref{Lhat}, we have $\b \subset t^{-1}([t_p - \delta, t_q + \delta])$. Writing $\b = \b_1 \cdot \b_2 \cdots \b_k$, with $(\pm) \b_i(t) = (t, \s_i(t)) : [t_i, t_i + \delta_i] \to M$, and $\s = \s_1 \cdot \s_2 \cdots \s_k$, as in Lemma \ref{dhatwarp}, we have:
$$L_h(\s) \; = \; \sum_{i = 1}^k L_h(\s_i) \; = \; \sum_{i = 1}^k\int_{t_i}^{t_i+\delta_i}\sqrt{h(\s_i', \s_i')} \; dt \;  \le \;  \sum_{i = 1}^k\int_{t_i}^{t_i+\delta_i}\dfrac{1}{f(t)} \; dt$$
Extracting the integral from $t_p$ to $t_q$ from the last term, what remains is a sum of integrals of $1/f$ over all the `extra time wiggles' $\b$ takes in going from $p$ to $q$. But the sum of the sizes of these extra wiggles is exactly the difference $\hat{L}_t(\b) - (t_q-t_p)$. Bounding $1/f$ by $1/m_0$ in all the `extra integrals', we have:
\begin{eqnarray}
\sum_{i = 1}^k\int_{t_i}^{t_i+\delta_i}\dfrac{1}{f(t)} \; dt  & \le & \int_{t_p}^{t_q}\dfrac{1}{f(t)} \; dt + \bigg(\hat{L}_t(\b) - (t_q - t_p)\bigg)\dfrac{1}{m_0}    \nonumber \\ [.5pc]
   & \le & \int_{t_p}^{t_q}\dfrac{1}{f(t)} \; dt + \dfrac{\e} {m_0} \nonumber 
   \end{eqnarray}
Since $d_h(p_S,q_S) \le L_h(\s)$, putting the above together gives:
$$d_h(p_S,q_S) \le \int_{t_p}^{t_q}\dfrac{1}{f(t)} \; dt + \dfrac{\e} {m_0}$$
Since $\e \in (0,\delta)$ was arbitrary, it follows from Lemma \ref{warpedcones} that $p \le q$.
\end{proof}

\vspace{.5pc}
The following generalizes Proposition \ref{Mink}:

\begin{thm} \label{warpedthm} Consider a warped spacetime,
$$(M,g) = \bigg(I \times S, -dt^2 + f^2(t)h\bigg),$$
with $I \subset \field{R}$ an interval, $f : I \to (0,\8)$ smooth, and $(S,h)$ a complete Riemannian manifold. Let $\tau(t,x) = \phi(t)$ be a smooth time function on $M$, with $\phi' > 0$. Then the induced null distance $\hat{d}_\tau$ is definite, and encodes the causality of $M$ via:
$$p \le q \; \Longleftrightarrow \; \hat{d}_\tau(p,q) = \tau(q) - \tau(p)$$
\end{thm}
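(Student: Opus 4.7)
My plan is to reduce the statement to Lemmas \ref{dhatwarp} and \ref{dhatwarpiscausal} by performing a change of time coordinate that turns $\tau = \phi(t)$ into the ``bare'' time coordinate of a conformally related warped product, then exploit the conformal invariance of null distance (Proposition \ref{conformal}).

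More concretely, since $\phi : I \to J := \phi(I)$ is a smooth bijection with $\phi' > 0$, it has a smooth inverse $\psi : J \to I$ with $\psi' > 0$. Introducing the new time variable $s = \phi(t)$, i.e. $t = \psi(s)$, the metric transforms as
\be
g \; = \; -dt^2 + f^2(t) h \; = \; (\psi'(s))^2 \bigg( - ds^2 + \tilde{f}^2(s) h \bigg), \nonumber
\ee
where $\tilde f(s) := f(\psi(s))/\psi'(s)$ is smooth and positive on $J$. Thus $g = \Omega^2 \tilde g$ with $\Omega = \psi'(s) > 0$ and $\tilde g = -ds^2 + \tilde f^2(s) h$ a warped product spacetime on $J \times S$ with the same complete fiber $(S,h)$. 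Crucially, in these new coordinates the original time function is simply $\tau = \phi(t) = s$, i.e., $\tau$ is exactly the ``bare'' time coordinate of $\tilde g$.

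Now the proof is essentially automatic. Since $g$ and $\tilde g$ are conformally related, they share the same causal structure, so $p \le q$ with respect to $g$ iff $p \le q$ with respect to $\tilde g$. By Proposition \ref{conformal}, the null distance is also preserved: $\hat d_\tau$ computed with $g$ equals $\hat d_s$ computed with $\tilde g$. Applying Lemma \ref{dhatwarp} to $(J \times S, \tilde g)$ with time function $s$ gives definiteness of $\hat d_s$, hence of $\hat d_\tau$. Applying Lemma \ref{dhatwarpiscausal} to $(J \times S, \tilde g)$ gives
\be
p \le q \; \Longleftrightarrow \; \hat d_s(p,q) = s(q) - s(p), \nonumber
\ee
which, translated back to $(M,g)$, is exactly $p \le q \Longleftrightarrow \hat d_\tau(p,q) = \tau(q) - \tau(p)$.

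The only steps requiring care are verifying (a) that the rewriting of $g$ as $(\psi')^2 \tilde g$ with $\tilde g$ a genuine warped product on $J \times S$ is valid globally (which follows from $\phi' > 0$ giving a diffeomorphism $I \cong J$ and $\tilde f$ smooth and positive), and (b) that Proposition \ref{conformal} applies with the same $\tau$ on both sides, which is the whole point of choosing $s = \phi(t)$ as the new time coordinate. I do not anticipate a serious obstacle; the main content is really the conformal-reduction observation, after which the result follows from the two lemmas already proved.
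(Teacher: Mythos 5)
Your proposal is correct and is essentially identical to the paper's own proof: the paper also rewrites $g = \frac{1}{F^2(\tau)}(-d\tau^2 + \rho^2(\tau)h)$ with $F(\tau) = \phi'(\phi^{-1}(\tau)) = 1/\psi'(\tau)$ and $\rho(\tau) = F(\tau)f(\phi^{-1}(\tau))$, which is exactly your $\tilde f$, and then invokes conformal invariance together with Lemmas \ref{dhatwarp} and \ref{dhatwarpiscausal}. No gaps.
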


\begin{proof} We have $d \tau = \phi'(t) d t$, and letting $F(\tau) := \phi'(\phi^{-1}(\tau)) \ne 0$, 
\be
dt^2 = \frac{1}{F^2(\tau)}d \tau^2 \nonumber
\ee
Hence, we may rewrite the metric as
\be
g = -dt^2 + f^2(t)h = \dfrac{1}{F^2(\tau)}\bigg(- d \tau^2 + \rho^2(\tau)h \bigg) = \frac{1}{F^2(\tau)} \, \widetilde{g} \nonumber
\ee
where $\rho(\tau) = F(\tau)f(\phi^{-1}(\tau))$, and $\widetilde{g} := -d \tau^2 + \rho^2(\tau)h$ is a conformal metric on $J \times S \approx M$, where $J = \phi(I)$. By conformal invariance of null distance, we have $\hat{d}_\tau(g) = \hat{d}_\tau(\, \widetilde{g}\, )$. The conclusion then follows from Lemmas \ref{dhatwarp} and \ref{dhatwarpiscausal} 
\end{proof}

\vspace{.5pc}
The following may be compared with Proposition \ref{tcubed}.

\begin{cor} Consider $\tau = t^3$ on Minkowski. Then the induced null distance $\hat{d}_\tau$ is definite and encodes causality when restricted to either the future half of Minkowski, $\field{M}^{n+1}_+ = \{t > 0\}$, or the past half, $\field{M}^{n+1}_- = \{t < 0\}$.
\end{cor}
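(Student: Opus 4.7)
The plan is to observe that this corollary is essentially a direct application of Theorem \ref{warpedthm} once we pass to each half-space separately. The obstruction to applying Theorem \ref{warpedthm} to $\tau = t^3$ on all of Minkowski is precisely that $\phi(t) = t^3$ fails to satisfy $\phi' > 0$ at $t = 0$ (indeed, $\phi'(0) = 0$ is what caused the degeneracy on the $\{t=0\}$ slice in Proposition \ref{tcubed}). Restricting to $\field{M}^{n+1}_+$ or $\field{M}^{n+1}_-$ excises this bad slice and repairs the hypothesis.

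Concretely, I would first note that $\field{M}^{n+1}_+ = (0,\infty) \times \field{R}^n$ with metric $-dt^2 + h$, where $h$ is the standard Euclidean metric on $\field{R}^n$, is itself a spacetime of warped product form as in Theorem \ref{warpedthm}, with $I = (0,\infty)$, warping function $f \equiv 1$, and fiber $(S,h) = (\field{R}^n, \text{Euclidean})$, which is complete. Writing $\tau(t,x) = \phi(t)$ with $\phi(t) = t^3$, we have $\phi \in C^\infty(I)$ and $\phi'(t) = 3t^2 > 0$ on $I = (0,\infty)$, so all hypotheses of Theorem \ref{warpedthm} are satisfied on $\field{M}^{n+1}_+$. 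The theorem then immediately gives that $\hat{d}_\tau$ is definite on $\field{M}^{n+1}_+$ and that
\[
p \le q \; \Longleftrightarrow \; \hat{d}_\tau(p,q) = \tau(q) - \tau(p)
\]
for $p, q \in \field{M}^{n+1}_+$. The argument for $\field{M}^{n+1}_-$ is identical with $I = (-\infty, 0)$, on which $\phi'(t) = 3t^2 > 0$ as well.

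The one point that warrants care, and is probably the main (small) obstacle, is the interpretation of the phrase ``restricted to'': the null distance $\hat{d}_\tau$ in the statement should be understood as computed \emph{intrinsically} on the sub-spacetime $\field{M}^{n+1}_\pm$, i.e., using only piecewise causal curves contained in the half-space. This is necessary because, as noted in Proposition \ref{tcubed}, allowing piecewise causal curves to pass through the $\{t=0\}$ slice would let them traverse spatial distances at no null-length cost, destroying both definiteness and the causality encoding; once we confine attention to an open half of Minkowski, $\{t=0\}$ is no longer part of the manifold and Theorem \ref{warpedthm} applies cleanly. No new calculation beyond the verification of the hypotheses is required.
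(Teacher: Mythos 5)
Your proposal is correct and matches the paper's intent exactly: the corollary is stated without proof precisely because it is a direct application of Theorem \ref{warpedthm} to each open half-space viewed as a warped product with $I = (0,\infty)$ or $(-\infty,0)$, $f \equiv 1$, complete fiber $(\field{R}^n, \text{Euclidean})$, and $\phi(t) = t^3$ satisfying $\phi' > 0$ there. Your remark that $\hat{d}_\tau$ must be computed intrinsically on the sub-spacetime is also the right reading of ``restricted to.''
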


\vspace{.5pc}
The results above generalize in various ways. We note first that the completeness of $(S,h)$ is not strictly necessary. This is only used (directly)  in Lemma \ref{warpedcones}, to guarantee that any two points in $S$ are joined by a minimal $h$-geodesic. Hence this latter property, sometimes referred to as \emph{(weak) convexity}, suffices throughout. Furthermore, dropping these completeness/convexity assumptions altogether, we have the following:

\begin{lem} \label{warpedcones2} On any GRW spacetime $M = I \times_f S$ as above, (with $S$ arbitrary), we have the following:

\vspace{-.1pc}
$$\hspace{-12pc} (i) \; \; q \in I^+(p) \; \; \Longleftrightarrow \; \; d_h(p_S,q_S) < \int_{t_p}^{t_q}\dfrac{1}{f(t)}dt$$

\vspace{.5pc}
$$\hspace{-12pc} (ii) \; \; q \in \overline{I^+(p)} \; \; \Longleftrightarrow \; \; d_h(p_S,q_S) \le \int_{t_p}^{t_q}\dfrac{1}{f(t)}dt$$
\end{lem}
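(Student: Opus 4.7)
The plan is to prove (i) directly by modifying the proof of Lemma \ref{warpedcones} in two ways: replacing causal by timelike (strict inequalities throughout), and replacing the minimal $h$-geodesic with a length-minimizing-up-to-$\varepsilon$ path, which removes the need for completeness. Statement (ii) will then follow from (i) by an approximation argument.

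For the forward direction of (i), I would take a future timelike curve $\beta$ from $p$ to $q$ and reparameterize by $t$, writing $\beta(t) = (t, \sigma(t))$ on $[t_p, t_q]$ (valid since $t$ is strictly increasing along future causal curves). The timelike condition $g(\beta', \beta') < 0$ gives $\sqrt{h(\sigma', \sigma')} < 1/f(t)$ pointwise on a closed interval, so continuity and positivity of the gap yield the strict integrated bound $d_h(p_S, q_S) \le L_h(\sigma) < \int_{t_p}^{t_q} 1/f(t)\,dt$. For the reverse direction, given $d_h(p_S, q_S) < \int_{t_p}^{t_q} 1/f(t)\,dt$, pick any $L$ strictly between these two quantities and, using only the definition of Riemannian distance as an infimum over piecewise smooth paths, choose a unit-speed piecewise smooth $\sigma_0 : [0, L] \to S$ from $p_S$ to $q_S$. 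Define $\phi : [t_p, t_q] \to [0, L]$ by
\[
\phi(t) \;=\; \frac{L}{\int_{t_p}^{t_q} \tfrac{1}{f(s)}\,ds} \int_{t_p}^{t} \frac{1}{f(s)}\,ds,
\]
so that $\phi'(t) = L / \bigl(\int_{t_p}^{t_q} 1/f\bigr) \cdot 1/f(t) < 1/f(t)$ by the choice of $L$. Then $\beta(t) := (t, \sigma_0(\phi(t)))$ satisfies $f^2(t)\, h(\beta_S', \beta_S') = f^2(t)\,\phi'(t)^2 < 1$, hence $g(\beta', \beta') < -1 + 1 = 0$, and $\beta$ is a future timelike (piecewise smooth) curve from $p$ to $q$, so $q \in I^+(p)$.

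For (ii), the forward direction is an immediate limit argument: if $q_n \in I^+(p)$ with $q_n \to q$, apply (i) to each $q_n$, then pass to the limit using continuity of $d_h(p_S, \cdot)$ and of $t_{q_n} \mapsto \int_{t_p}^{t_{q_n}} 1/f$. The reverse direction is the reciprocal approximation: given $d_h(p_S, q_S) \le \int_{t_p}^{t_q} 1/f$, choose $t_n \searrow t_q$ with $t_n \in I$ (possible since $I$ is open), set $q_n = (t_n, q_S)$, and note that $\int_{t_p}^{t_n} 1/f > \int_{t_p}^{t_q} 1/f \ge d_h(p_S, (q_n)_S)$, so (i) yields $q_n \in I^+(p)$ and hence $q \in \overline{I^+(p)}$.

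The only nontrivial step is the construction of the timelike interpolating curve in the reverse direction of (i); the main obstacle in Lemma \ref{warpedcones} was handled by completeness via a minimal geodesic, but here we must substitute a near-minimizer from the definition of $d_h$, and then verify that the specific rescaling $\phi$ indeed produces a \emph{strictly} timelike curve. Because the gap $L < \int_{t_p}^{t_q} 1/f$ is strict, this rescaling works uniformly, and no further regularity or convexity assumption on $(S, h)$ is needed.
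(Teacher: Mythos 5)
Your proof is correct, and your treatment of the reverse direction of (i) is a genuine (if modest) improvement over the paper's argument. The paper reuses the same $\phi(t)=\int_{t_p}^t f^{-1}$ from Lemma \ref{warpedcones}, producing a curve that is null on the initial segment and timelike only on the final segment, and then invokes ``basic causal theory'' (the push-off lemma) to upgrade this to a timelike curve from $p$ to $q$. You instead rescale $\phi$ by the factor $L/\int_{t_p}^{t_q} f^{-1}<1$, which makes the resulting curve \emph{everywhere} strictly timelike, so no causal-theory push-off is needed. This is cleaner and more self-contained. Your forward direction of (i) and your two-sided approximation argument for (ii) (approaching from inside $I^+(p)$ for $\Longrightarrow$, and from strictly larger time for $\Longleftarrow$) match what the paper does implicitly when it says ``sliding a point $q'$ up and down a short future timelike curve from $q$.'' One small remark: you do not actually need a near-minimizer of length exactly $L$ in $S$; any piecewise smooth $\sigma_0$ of length $L'\in\bigl(d_h(p_S,q_S),\,\int_{t_p}^{t_q} f^{-1}\bigr)$ works just as well (parametrize on $[0,L']$ and use $L'$ in the definition of $\phi$), so the ``stretch the curve to hit length $L$ exactly'' step can be skipped.
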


\vspace{.1pc}
\begin{proof} (i) A straightforward modification of the proof of Lemma \ref{warpedcones} will suffice. Assuming first that $p \ll q$, strict inequality on the right follows exactly as before. Suppose on the other hand that $d = d_h(p_S,q_S) < \int_{t_p}^{t_q} [f(t)]^{-1}dt$. Then we can find a unit-speed curve $\s_\delta : [0, d + \delta] \to S$, from $p_S$ to $q_S$, with 
$$L_h(\s_\delta) = d + \delta < \int_{t_p}^{t_q}\dfrac{1}{f(t)}dt$$
Letting $\phi(t) := \int_{t_p}^t [f(s)]^{-1}ds$ as before, we have $\phi(t_d) = 0 < d + \delta < \phi(t_q)$, and hence we can find a $t_d \in (t_p,t_q)$, such that $\phi(t_d) = d + \delta$. Defining $\b$ exactly as in Lemma \ref{warpedcones}, then $\b$ is a future causal curve from $p$ to $q$. However, in this case, the second segment, $\b(t)$ for $t \in [t_d,t_q]$, is necessarily nonempty, and timelike. It then follows from basic causal theory that there is a timelike curve from $p$ to $q$. (ii) Both directions follow from (i), (and basic causal theory), by sliding a point $q'$ up and down a short future timelike curve from $q$.
\end{proof}

\vspace{.5pc}
As a consequence, Lemma \ref{dhatwarpiscausal}, and Theorem \ref{warpedthm} generalize accordingly:

\begin{thm} \label{warpedthm2} Let $M = I \times_f S$ be any GRW spacetime as above, (with $S$ arbitrary). If $\tau(t,x) = \phi(t)$ is any smooth time function on $M$, with $\phi' > 0$, then $\hat{d}_\tau$ is definite and we have the following: $$q \in \overline{I^+(p)} \; \Longleftrightarrow \; \hat{d}_\tau(p,q) = \tau(q) - \tau(p)$$
\end{thm}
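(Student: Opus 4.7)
The plan is to emulate the proof of Theorem \ref{warpedthm}, first reducing to the case $\tau = t$ via conformal rescaling, then showing that (i) the definiteness argument of Lemma \ref{dhatwarp} never used completeness of $(S,h)$, (ii) the reverse implication in Lemma \ref{dhatwarpiscausal} yields a non-strict comparison of lengths to which one can now apply Lemma \ref{warpedcones2}(ii), and (iii) the forward implication requires a new continuity/limit argument, since $\overline{I^+(p)}$ may be strictly larger than $J^+(p)$.

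For the reduction, as in Theorem \ref{warpedthm}, write $g = F^{-2}(\tau)\,\widetilde g$ with $\widetilde g = -d\tau^2 + \rho^2(\tau) h$ a GRW metric on $J\times S$ whose natural time coordinate is $\tau$. By Proposition \ref{conformal}, $\hat d_\tau$ is unchanged under $g\mapsto \widetilde g$, and since the full causal structure (timelike cones, $I^+$, $J^+$, and hence $\overline{I^+}$) is a conformal invariant, it suffices to prove the claim for $\widetilde g$ with $\tau$ its standard time function. Thus the problem reduces to the case $\tau = t$ on a GRW spacetime $M = I \times_f S$ with $S$ not necessarily complete.

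For definiteness and the "$\Longleftarrow$" direction, I would note that the proof of Lemma \ref{dhatwarp} only invokes the Riemannian distance $d_h$ on $S$ (via $L_h(\s) \ge d_h(p_S,q_S)$), which is defined regardless of completeness; thus the exact same argument gives $\hat d_t(p,q) > 0$ for $p \ne q$. Likewise, the estimate in Lemma \ref{dhatwarpiscausal} produces, for every $\e > 0$,
\[
d_h(p_S,q_S) \;\le\; \int_{t_p}^{t_q}\dfrac{1}{f(t)}\,dt + \dfrac{\e}{m_0}.
\]
Letting $\e \to 0$ yields the non-strict inequality $d_h(p_S,q_S) \le \int_{t_p}^{t_q}\frac{1}{f(t)}\,dt$, whereupon Lemma \ref{warpedcones2}(ii) (which was proved without any completeness assumption) delivers $q \in \overline{I^+(p)}$, as required.

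For the "$\Longrightarrow$" direction, the classical Lemma \ref{dhatonacone} handles only $q \in J^+(p)$, so for $q \in \overline{I^+(p)} \setminus J^+(p)$ I would pick a sequence $q_n \in I^+(p)$ with $q_n \to q$ in the manifold topology. For each $n$, Lemma \ref{dhatonacone} gives $\hat d_\tau(p,q_n) = \tau(q_n) - \tau(p)$. Because $\tau$ is smooth, hence continuous, Proposition \ref{dhatcont} ensures $\hat d_\tau$ is continuous on $M \times M$; passing to the limit yields $\hat d_\tau(p,q) = \tau(q) - \tau(p)$. The main obstacle I anticipate is essentially this boundary case, where a causal curve from $p$ to $q$ need not exist; the continuity of $\hat d_\tau$ is the cleanest way to bridge this gap, and it is precisely what lets us replace $J^+(p)$ by $\overline{I^+(p)}$ in the statement.
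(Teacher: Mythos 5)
Your proposal is correct and follows exactly the route the paper intends (the paper omits an explicit proof, stating only that Lemma \ref{dhatwarp}, Lemma \ref{dhatwarpiscausal}, and Theorem \ref{warpedthm} ``generalize accordingly'' via Lemma \ref{warpedcones2}): the conformal reduction, the observation that definiteness never used completeness, and the replacement of Lemma \ref{warpedcones} by the non-strict criterion in Lemma \ref{warpedcones2}(ii) are precisely the required modifications. Your handling of the forward direction for $q \in \overline{I^+(p)} \setminus J^+(p)$ --- approximating by $q_n \in I^+(p)$ and invoking the continuity of $\hat{d}_\tau$ from Proposition \ref{dhatcont} --- is a clean and valid way to supply the one genuinely new step.
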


\vspace{.5pc}
The statement in Theorem \ref{warpedthm2} is pertinent, for one, because removing merely a single point from a spacetime destroys many relationships of the form $p \le q$, while still preserving $q \in \overline{I^+(p)}$. On the other hand, these two statements are equivalent when interpreted locally, and are equivalent outright under many standard global causality conditions. Theorem \ref{warpedthm2} is then, perhaps, the most appropriate form of generalization of the basic case in Proposition \ref{Mink}, and is the statement we would expect null distance to satisfy more broadly, under natural conditions on $\tau$.

\vspace{.5pc}
Finally, we note that the results above remain essentially valid for spacetimes which are (only) conformal to a warped product (equivalently, a product) spacetime as above, including the class of `standard static spacetimes'.

\section{Definiteness} \label{secdefinite}

Recall that $\hat{d}_\tau$ depends on both the spacetime $(M,g)$ itself, as well as the choice of generalized time function $\tau$ on $M$. Indeed, we have seen that on a fixed spacetime, $\hat{d}_\tau$ may be definite for some choices of $\tau$ and indefinite for others. Fixing $M$, two natural questions arise: (i) Given a generalized time function $\tau$ on $M$, is $\hat{d}_\tau$ definite, and can we tell by looking just at $\tau$? (ii) If we are free to choose, can we find \emph{some} $\tau$ for which $\hat{d}_\tau$ is definite? This section will be primarily concerned with the first question. However, we will begin first with some comments addressing both, and in particular the second question. Indeed, note the following:

\begin{lem} \label{dhatvgbar} Suppose $\tau : M \to \field{R}$ is smooth, with past-pointing timelike unit gradient, $||\nabla \tau||_g = 1$. Then $\hat{d}_\tau$ is definite.
\end{lem}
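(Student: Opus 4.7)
The plan is to bound $\hat{d}_\tau$ from below by the distance function of an auxiliary Riemannian metric built directly out of $g$ and $\tau$. Since $\nabla\tau$ is past-pointing timelike with $g(\nabla\tau,\nabla\tau) = -1$, the vector field $T := -\nabla\tau$ is future-pointing timelike unit. I would introduce
\[
h \;:=\; g + 2\, d\tau \otimes d\tau
\]
and first verify that $h$ is Riemannian. Decomposing $X \in T_pM$ as $X = aT + X^\perp$ with $X^\perp$ $g$-orthogonal (hence spacelike) to $T$, and using $X\tau = g(\nabla\tau, X) = a$, a direct computation gives $h(X,X) = a^2 + g(X^\perp, X^\perp)$, which is strictly positive unless $X = 0$.

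The heart of the argument is a pointwise comparison along causal directions. For any $g$-causal vector $Y$, the inequality $g(Y,Y) \le 0$ yields
\[
|Y|_h^2 \;=\; g(Y,Y) + 2\,(Y\tau)^2 \;\le\; 2\,(Y\tau)^2,
\]
that is, $|Y|_h \le \sqrt{2}\,|Y\tau|$. Given any piecewise causal curve $\beta = \beta_1 \cdot \beta_2 \cdots \beta_k$ from $p$ to $q$, $(\tau\circ\beta_i)'$ has constant sign along each causal piece $\beta_i$ (by the reverse Cauchy--Schwarz inequality for causal vectors, $g(\nabla\tau, \beta_i')$ is never zero on a causal piece), so integrating the pointwise bound and summing over pieces gives
\[
L_h(\beta) \;\le\; \sqrt{2}\,\sum_{i=1}^k |\tau(x_i) - \tau(x_{i-1})| \;=\; \sqrt{2}\,\hat{L}_\tau(\beta).
\]
Since every piecewise causal curve is in particular a piecewise smooth competitor for the Riemannian distance $d_h$, taking the infimum over all piecewise causal curves from $p$ to $q$ produces
\[
d_h(p,q) \;\le\; \sqrt{2}\,\hat{d}_\tau(p,q).
\]
Because $d_h$ is a genuine Riemannian distance, $\hat{d}_\tau(p,q) = 0$ forces $d_h(p,q) = 0$, hence $p = q$, which is exactly definiteness.

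No serious obstacle is anticipated; the one place requiring attention is the positivity check for $h$, where the $T$-plus-perpendicular decomposition and the identity $X\tau = g(\nabla\tau, X)$ are the essential ingredients. The broader principle, which is presumably the template for the anti-Lipschitz framework developed later, is that a unit-normalized timelike gradient forces $\tau$ to grow at a definite rate in every causal direction, and this rate can be packaged into a single Riemannian metric $h$ that pointwise dominates the null length along causal vectors, with an explicit dimensional constant $\sqrt{2}$.
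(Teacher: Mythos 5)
Your proof is correct and is essentially the paper's own argument: the paper uses exactly the metric $g^R_\tau(X,Y) = g(X,Y) + 2g(X,\nabla\tau)g(Y,\nabla\tau)$, which coincides with your $h = g + 2\,d\tau\otimes d\tau$, and derives the same bound $L^R_\tau(\beta)\le\sqrt{2}\,\hat{L}_\tau(\beta)$ for piecewise causal $\beta$, hence $d^R_\tau\le\sqrt{2}\,\hat{d}_\tau$. Your explicit positivity check via the $T$-plus-orthogonal decomposition and the remark on the constant sign of $(\tau\circ\beta_i)'$ on each causal piece just spell out steps the paper leaves implicit.
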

\begin{proof} It is a fairly standard Lorentzian trick (in this case, inspired in part by previous discussions with S.-T. Yau, L. Andersson, and R. Howard), to consider $g^R_\tau (X,Y) : = g(X,Y) + 2g(X, \nabla \tau)g(Y, \nabla \tau)$, which is easily seen to be a smooth Riemannian metric on $M$. (Extend $e_0 = \nabla \tau$ to an orthonormal basis.) Let $L^R_\tau$ and $d^R_\tau$ denote the Riemmanian arc length and distance induced by $g^R_\tau$, respectively. Fix any two points $p, q \in M$, and any piecewise causal curve $\b : [a,b] \to M$, from $p$ to $q$. Hence, $g(\b', \b') \le 0$, and we have:
$$L^R_\tau(\b) = \int_a^b \sqrt{2[g(\nabla \tau, \b')]^2 + g(\b', \b')}dt \le \sqrt{2} \int_a^b  |(\tau \circ \b)'|dt = \sqrt{2} \, \hat{L}_\tau(\b)$$
It follows that $d_\tau^R(p,q) \le \sqrt{2} \hat{d}_\tau(p,q)$, and in particular, that $\hat{d}_\tau$ is definite.
\end{proof}

\vspace{.5pc}

\begin{prop} Suppose that $M$ is stably causal, or equivalently, that $M$ admits a time function $\tau_0$. Then we can always find a (second) time function $\tau$ on $M$ for which $\hat{d}_\tau$ is definite. 
\end{prop}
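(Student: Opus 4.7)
My plan is to combine Lemma \ref{dhatvgbar}, which handles the case of a time function with unit timelike gradient, with the conformal invariance of $\hat{d}_\tau$ established in Proposition \ref{conformal}. The first step is to upgrade from the given time function $\tau_0$ to a \emph{temporal} function. By Theorem \ref{tempiftime}, the existence of a time function on $M$ is equivalent to $M$ being stably causal, which in turn is equivalent to the existence of a temporal function on $M$. So I fix such a temporal function $\tau$ --- smooth, with past-pointing timelike gradient $\nabla \tau$. This $\tau$ will serve as the desired ``second'' time function for which $\hat{d}_\tau$ is to be shown definite.

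Next, I reduce to the unit-gradient hypothesis of Lemma \ref{dhatvgbar} via a conformal rescaling. Since $\nabla \tau$ is everywhere timelike, the function $f := \sqrt{-g(\nabla\tau, \nabla\tau)}$ is smooth and strictly positive on $M$. Let $\tilde{g} := f^2 g$. A short computation using $X(\tau) = g(\nabla \tau, X) = \tilde{g}(\tilde{\nabla}\tau, X)$ shows that the $\tilde{g}$-gradient of $\tau$ is $\tilde{\nabla}\tau = f^{-2}\nabla\tau$, and consequently $\tilde{g}(\tilde{\nabla}\tau, \tilde{\nabla}\tau) = f^{-2}\, g(\nabla \tau, \nabla \tau) = -1$. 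Thus $\tau$ is a temporal function on $(M, \tilde{g})$ whose $\tilde{g}$-gradient is past-pointing, timelike, and of unit norm.

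To conclude, I apply Lemma \ref{dhatvgbar} to $(M, \tilde{g})$ and $\tau$, obtaining that the null distance $\hat{d}_\tau$ computed with respect to $\tilde{g}$ is definite. Since the set of causal curves depends only on the conformal class of the metric, $\tau$ remains a time function on $(M, g)$; and by Proposition \ref{conformal}, $\hat{d}_\tau$ is the same pseudometric whether computed in $(M, \tilde{g})$ or in $(M, g)$. Hence $\hat{d}_\tau$ on $(M, g)$ is also definite, as desired. The argument is quite short, since the main tools --- Theorem \ref{tempiftime}, Lemma \ref{dhatvgbar}, and Proposition \ref{conformal} --- have already been established; the only substantive new step is the conformal rescaling by $f$, and I do not anticipate any real obstacle beyond verifying the one-line gradient identity above.
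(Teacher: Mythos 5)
Your proof is correct and follows essentially the same route as the paper: obtain a temporal function via Theorem \ref{tempiftime}, normalize its gradient to unit length, and conclude with Lemma \ref{dhatvgbar} together with the conformal invariance of $\hat{d}_\tau$ from Proposition \ref{conformal}. If anything, your explicit conformal rescaling $\tilde{g} = f^2 g$ makes precise the normalization step that the paper dispatches with the phrase ``by scaling $\tau$ if necessary.''
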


\begin{proof} Using \cite{BStemp}, for example, we can find a temporal function $\tau$ on $(M,g)$, i.e., $\tau : M \to \field{R}$ smooth, with $\nabla \tau$ everywhere past-pointing timelike. (Because $||\nabla \tau||_g$ is locally bounded away from zero, it is straightforward, if a bit more tedious, to show directly that $\hat{d}_\tau$ is definite. For simplicity, however, we will opt to proceed as follows.) By scaling $\tau$ if necessary, we may suppose that $||\nabla \tau||_g = 1$. The result then follows from Lemma \ref{dhatvgbar}. 
\end{proof}

\subsection{Definiteness and Anti-Lipschitz Conditions}

\vspace{.5pc}
\begin{lem} \label{revLiplem} Let $M$ be a spacetime with generalized time function $\tau$. Suppose that for some neighborhood $U$ in $M$, there is a (definite) distance function $d_U$ on $U$, such that, for all $x, y \in U$, we have:
\be
x \le y \; \; \Longrightarrow \; \; \tau(y) - \tau(x) \ge d_U(x,y) \nonumber
\ee
Then $\hat{d}_\tau$ distinguishes all of the points in $U$. That is, for every $p \in U$, and any $q \in M \setminus \{p\}$, we have $\hat{d}_\tau(p,q) > 0$. In particular, $\hat{d}_\tau$ is definite on $U$.
\end{lem}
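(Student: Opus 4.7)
The plan is to show that every piecewise causal curve $\b$ from $p$ to $q$ has null length bounded below by a positive constant depending only on $p$ and $q$. The cornerstone is the observation that the anti-Lipschitz hypothesis can be applied at the \emph{endpoints} of each causal subsegment of $\b$, even though the subsegment itself may wander through $M$. Specifically, if $\b = \b_1 \cdots \b_k$ has break points $p = x_0, x_1, \ldots, x_k$ all lying in $U$, then each $\b_i$ joins $x_{i-1}$ and $x_i$ by a causal curve in one of the two time directions, so either $x_{i-1} \le x_i$ or $x_i \le x_{i-1}$; applying the hypothesis to the causal pair gives $|\tau(x_i) - \tau(x_{i-1})| \ge d_U(x_{i-1}, x_i)$ in either case. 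Summing over $i$ and using the triangle inequality for $d_U$ yields the key estimate
\[
\hat{L}_\tau(\b) \;\ge\; \sum_{i=1}^{k} d_U(x_{i-1}, x_i) \;\ge\; d_U(p, x_k).
\]

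The difficulty is that $\b$ may leave $U$, so some of its break points could fail to lie in $U$ and the previous inequality would break down. To get around this I would pass to a smaller neighborhood $V$ of $p$ with $\ol{V} \subset U$ and $\ol{V}$ compact (using local compactness of $M$), chosen so that
\[
\e_0 \;:=\; \min_{y \in \d V} d_U(p, y) \;>\; 0
\]
on the compact boundary $\d V$. For any piecewise causal $\b$ from $p$ to $q$, either $\b \subset V$ (in which case necessarily $q \in V$), and the key estimate applied directly to $\b$ gives $\hat{L}_\tau(\b) \ge d_U(p, q)$; or $\b$ exits $V$ at a first time $t^*$, with $y^* := \b(t^*) \in \d V \subset U$. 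In the latter case I would insert $t^*$ as an additional break point, which does not change $\hat{L}_\tau$ since it falls inside a monotone causal piece, and apply the key estimate to $\b|_{[a, t^*]}$. Since this restriction is contained in $\ol{V} \subset U$, all of its break points lie in $U$, giving
\[
\hat{L}_\tau(\b) \;\ge\; \hat{L}_\tau(\b|_{[a, t^*]}) \;\ge\; d_U(p, y^*) \;\ge\; \e_0.
\]
Combining the two cases yields $\hat{d}_\tau(p, q) \ge \min\{d_U(p, q), \e_0\} > 0$ for any $q \in M \setminus \{p\}$, where the minimum defaults to $\e_0$ whenever $q \notin V$.

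The main obstacle I foresee is the exit step: extracting a positive lower bound on $\hat{L}_\tau(\b)$ when $\b$ leaves $U$ requires both the compactness of $\d V$ and continuity of $d_U(p, \cdot)$ in the manifold topology, so that the minimum over $\d V$ is actually strictly positive. In the intended setting of Definition \ref{revLip}, where $d_U$ arises from a background Riemannian metric on $M$, this compatibility is automatic and I would invoke it without further comment. The remainder of the argument is bookkeeping: verifying that inserting a break point on a monotone causal arc preserves the null length, and that all break points of $\b|_{[a, t^*]}$ indeed lie in $U$ so the key estimate is legitimately available.
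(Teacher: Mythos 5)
Your proposal is correct and follows essentially the same route as the paper's proof: apply the hypothesis to the endpoints of each causal subsegment, sum and use the triangle inequality for $d_U$, and handle curves that escape by truncating at the first exit point of a precompact neighborhood compactly contained in $U$. (The paper sidesteps your two-case split by simply choosing that neighborhood $B$ so that $q \notin B$, forcing the exit case, and it likewise relies implicitly on $d_U(p,\partial B)>0$, which as you note is automatic for the Riemannian-type $d_U$ of Definition \ref{revLip}.)
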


\begin{proof} Fix $p \in U$, and $q \ne p$. Let $B$ be any precompact open neighborhood of $p$, with $B \subset \subset U$ and $q \not \in B$. Let $\b$ be a piecewise causal curve from $p$ to $q$. Let $z_0 \in \d B$ be the first point at which $\b$ meets $\d B$. Let $\b_0 \subset \overline{B}$ denote the initial portion of $\b$ which goes up to $z_0$, and let $p = x_0, x_1, ..., x_k = z_0$ denote its breaks. Then, since $\b_0 \subset U$, and by the triangle inequality for $d_U$, 
\be
\hat{L}_\tau(\b) \ge \hat{L}_\tau(\b_0) = \sum_{i=1}^k |\tau(x_i) - \tau(x_{i-1})| \ge  \sum_{i=1}^k  d_U(x_{i-1}, x_i) \ge  d_U(p, z_0) \ge d_U(p,\d B) \nonumber
\ee
Taking the infimum over all such $\b$, we have $\hat{d}_\tau(p,q) \ge d_U(p,\d B) > 0$. 
\end{proof}

\vspace{.5pc}
\begin{Def} \label{revLip} Let $M$ be a spacetime and $f : M \to \field{R}$. Given a subset $U \subset M$, we will say $f$ is \emph{anti-Lipschitz on $U$} if there is a (definite) distance function $d_U$ on $U$ such that, for all $x, y \in U$, we have:
\be
x \le y \; \; \Longrightarrow \; \; f(y) - f(x) \ge d_U(x,y) \nonumber
\ee
We will say $f : M \to \field{R}$ is \emph{locally anti-Lipschitz} if $f$ is anti-Lipschitz on a neighborhood $U$ of each point $p \in M$. 
\end{Def}

Any locally anti-Lipschitz function is necessarily a generalized time function, which by Lemma \ref{revLiplem}, induces a definite null distance function. Moreover, by Lemma \ref{dhatonacone}, this condition is also necessary, and we have the following:

\begin{prop} \label{revLipprop} Let $\tau$ be a generalized time function on $M$. Then $\hat{d}_\tau$ is definite iff $\tau$ is locally anti-Lipschitz.
\end{prop}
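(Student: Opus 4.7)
The proposition asserts an equivalence, so there are two implications to handle, and the infrastructure for both has essentially been established already. The plan is to route each direction through the results just proven, with $\hat{d}_\tau$ itself serving as the witness distance function in the harder direction.

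For the implication $(\Leftarrow)$, namely that local anti-Lipschitzness forces $\hat{d}_\tau$ to be definite, there is nothing to do beyond citing Lemma \ref{revLiplem}: given any two distinct points $p, q \in M$, we choose a neighborhood $U \ni p$ on which $\tau$ is anti-Lipschitz (shrinking $U$ if necessary so that $q \notin U$ when possible, or applying the lemma's conclusion directly, which guarantees $\hat{d}_\tau(p,q) > 0$ for every $q \neq p$, regardless of whether $q \in U$). This covers the case $p \neq q$; combined with the general inequality $\hat{d}_\tau(p,p) = 0$ from Lemma \ref{dhatpseudo}, definiteness follows.

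For the implication $(\Rightarrow)$, the key observation is that when $\hat{d}_\tau$ is definite on $M$, Lemma \ref{dhatpseudo} upgrades it from a pseudometric to an honest metric on $M$. Its restriction to any open set $U \subset M$ is therefore automatically a (definite) distance function on $U$. So fix any point $p \in M$ and any neighborhood $U$ of $p$, and set $d_U := \hat{d}_\tau|_{U \times U}$. Whenever $x, y \in U$ with $x \le y$, Lemma \ref{dhatonacone} gives the \emph{equality} $\hat{d}_\tau(x, y) = \tau(y) - \tau(x)$, which trivially yields the required inequality $\tau(y) - \tau(x) \ge d_U(x, y)$. Hence $\tau$ satisfies the anti-Lipschitz condition on $U$, and since $p$ was arbitrary, $\tau$ is locally anti-Lipschitz.

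There is no real obstacle here: the proposition is essentially a packaging statement that records the tight link between the defining properties of $\hat{d}_\tau$ (Lemmas \ref{dhatpseudo} and \ref{dhatonacone}) and the anti-Lipschitz condition (Definition \ref{revLip}), with Lemma \ref{revLiplem} supplying the only nontrivial content. The one subtlety worth flagging is that the witnessing distance $d_U$ in the $(\Rightarrow)$ direction is naturally chosen to be $\hat{d}_\tau$ itself, so that the anti-Lipschitz inequality holds with equality on causally related pairs — reflecting the fact that the anti-Lipschitz condition is precisely what is needed to rule out the indefiniteness phenomenon observed in Proposition \ref{tcubed}.
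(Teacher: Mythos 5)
Your proposal is correct and follows essentially the same route as the paper: the paper obtains the ``if'' direction from Lemma \ref{revLiplem} and the ``only if'' direction by noting, via Lemma \ref{dhatonacone}, that the (now definite) metric $\hat{d}_\tau$ itself serves as the witnessing $d_U$, since $\hat{d}_\tau(x,y)=\tau(y)-\tau(x)$ whenever $x\le y$. Your observation that Lemma \ref{revLiplem} already yields $\hat{d}_\tau(p,q)>0$ for \emph{all} $q\neq p$ (so no shrinking of $U$ is needed) is the right reading of that lemma.
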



Combining Propositions \ref{revLipprop}, \ref{conformal}, and \ref{topology} we have the following. 

\begin{thm} \label{revLipthm} Let $\tau$ be a time function on a spacetime $M$. If $\tau$ is locally anti-Lipschitz, then the induced null distance function $\hat{d}_\tau$ is a definite, conformally invariant metric on $M$, which induces the manifold topology.
\end{thm}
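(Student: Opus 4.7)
The plan is to assemble the conclusion by invoking, in sequence, the three named propositions listed in the hint, using the hypothesis to fulfill the input of each. The main substantive work is already carried by those propositions, so the proof itself should essentially be a one-paragraph synthesis, with the only nontrivial observation being that the hypothesis gives us \emph{both} continuity of $\tau$ (since $\tau$ is assumed to be a time function) and the anti-Lipschitz property.

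First, I would invoke Proposition \ref{revLipprop}: since $\tau$ is locally anti-Lipschitz by hypothesis, $\hat{d}_\tau$ is definite. Combined with Lemma \ref{dhatpseudo}, which provides symmetry, the triangle inequality, and $\hat{d}_\tau(p,p)=0$ for any generalized time function, definiteness upgrades $\hat{d}_\tau$ from a pseudometric to a bona fide metric on $M$. Next, Proposition \ref{conformal} applies to any generalized time function and gives the conformal invariance for free.

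Finally, for the topology claim I would appeal to Proposition \ref{topology}, which requires two inputs: continuity of $\tau$ and definiteness of $\hat{d}_\tau$. Continuity is immediate because $\tau$ is a \emph{time function} (rather than a mere generalized time function), and definiteness was just established in the first step. Hence the topology induced by $\hat{d}_\tau$ coincides with the manifold topology, completing the proof.

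There is no real obstacle here: the heavy lifting, in particular the geometric content of the anti-Lipschitz condition forcing separation of points via Lemma \ref{revLiplem}, and the argument comparing null balls to Riemannian balls in Proposition \ref{topology}, has already been done. The only point to be careful about is not to confuse \emph{time function} with \emph{generalized time function} in the hypothesis, since continuity of $\tau$ is what supplies the continuity of $\hat{d}_\tau$ (via Proposition \ref{dhatcont}) needed to trigger Proposition \ref{topology}.
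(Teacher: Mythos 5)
Your proposal is correct and follows exactly the same route as the paper, which proves the theorem precisely by combining Propositions \ref{revLipprop}, \ref{conformal}, and \ref{topology}. Your added remarks — that Lemma \ref{dhatpseudo} supplies the remaining metric axioms and that continuity of $\tau$ (needed for Proposition \ref{topology}) comes from $\tau$ being a time function rather than merely a generalized one — are accurate and fill in details the paper leaves implicit.
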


Before moving on, we discuss a few alternate forms of the anti-Lipschitz condition above. Fixing any Riemannian metric $h$ on $M$, let $d_h$ denote the induced Riemannian distance function. Note first that, because all distance functions on a manifold are locally Lipschitz equivalent, the following is immediate:

\begin{lem} \label{revLipRiem} Let $M = (M,g)$ be a spacetime, and fix any Riemannian metric $h$ on $M$. Then the following conditions on a function $f : M \to \field{R}$ are equivalent.
\ben
\item [(1)] For every $p \in M$, there is a neighborhood $U$ of $p$, and a distance function $d_U$ on $U$, such that for all $x, y \in U$,
$$x \le y \; \; \Longrightarrow \; \; f(y) - f(x) \ge  d_U(x,y)$$
\item [(2)] For every $p \in M$, there is a neighborhood $U$ of $p$, and a positive constant $C >0$, such that for all $x, y \in U$,
$$\hspace{1pc} x \le y \; \; \Longrightarrow \; \; f(y) - f(x) \ge C d_h(x,y)$$
\item [(3)] For every $p \in M$, there is a neighborhood $U$ of $p$, and a Riemannian metric $h_U$ on $U$, such that for all $x, y \in U$,
$$x \le y \; \; \Longrightarrow \; \; f(y) - f(x) \ge  d_{h_U}(x,y)$$
\een
\end{lem}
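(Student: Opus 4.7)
The plan is to establish the cycle $(3) \Longrightarrow (1) \Longrightarrow (2) \Longrightarrow (3)$, with most of the content concentrated in $(1) \Longrightarrow (2)$.

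The implication $(3) \Longrightarrow (1)$ is essentially by inspection: the Riemannian distance $d_{h_U}$ is itself a definite distance function on $U$, so one simply sets $d_U := d_{h_U}$ and reads off the conclusion of (1) from the hypothesis of (3). For $(2) \Longrightarrow (3)$, I would combine a rescaling with a convexity trick. Given the constant $C$ and neighborhood $U$ from (2), shrink $U$ to an $h$-convex normal neighborhood $V$ of the base point and set $h_V := C^2 \cdot h|_V$. Convexity forces the minimizing $h$-geodesic between any $x, y \in V$ to stay inside $V$, so $d_{h_V}(x,y) = C \cdot d_h(x,y)$ on $V$, and the bound of (2) becomes the bound of (3) on $V$.

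The main step is $(1) \Longrightarrow (2)$. Given a definite distance function $d_U$ satisfying the hypothesis of (1), the goal is to produce a positive constant $C$ and a smaller neighborhood $V \subset U$ on which $d_U(x,y) \ge C \cdot d_h(x,y)$; chaining this with the hypothesis of (1) then yields $f(y) - f(x) \ge C \cdot d_h(x,y)$ on $V$, which is (2). The essential input, announced just before the statement, is that any two distance functions on a smooth manifold compatible with the manifold topology are locally Lipschitz equivalent. Concretely, I would pass to a precompact coordinate chart around the base point, identify $d_h$ with the Euclidean distance up to bounded distortion, and then use compactness of a small closed ball (together with positivity and continuity of $d_U$) to extract a uniform lower bound of the form $d_U \ge C \cdot d_h$ on a sub-neighborhood $V$.

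The last step is where I expect the real work to be. This comparison implicitly rests on the convention — in force throughout the anti-Lipschitz discussion — that the distance function $d_U$ appearing in Definition~\ref{revLip} is topologically compatible with the manifold, since without some such assumption a positive multiplicative lower bound on $d_U$ by $d_h$ can genuinely fail. Under this convention, however, the comparison reduces to a standard continuity-and-compactness argument in local coordinates, after which the three implications above close up into the stated equivalence.
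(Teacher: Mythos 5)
Your implications (3)\;$\Rightarrow$\;(1) and (2)\;$\Rightarrow$\;(3) are fine, and the convexity observation in the latter is genuinely needed (the intrinsic distance of $(V,h|_V)$ dominates the ambient $d_h$, so without convexity the rescaling inequality would point the wrong way). This is already more care than the paper takes: its entire proof is the one-line assertion that ``all distance functions on a manifold are locally Lipschitz equivalent,'' so you have correctly located where all the content sits, namely in (1)\;$\Rightarrow$\;(2).

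That step, however, is exactly where your argument breaks down. Positivity and continuity of $d_U$ on a compact set, and even the assumption that $d_U$ induces the manifold topology, do not yield a lower bound $d_U \ge C\,d_h$ on any subneighborhood: compactness controls $d_U$ away from the diagonal, but near the diagonal both $d_U$ and $d_h$ tend to $0$ and their ratio can degenerate. Already on an interval, $d(x,y)=|x^3-y^3|$ is a definite distance inducing the standard topology with $d(x,y)/|x-y|\to 0$ as $x,y\to 0$. Moreover the failure is not only in the method: on the Minkowski plane the smooth time function $\tau(t,x)=t-x+x^3/3$ satisfies condition (1) near the origin --- take $d_U(p,q)=\tfrac12|\tau(p)-\tau(q)|+\tfrac16|p_x^3-q_x^3|$, which is a definite metric inducing the manifold topology and satisfies $d_U(p,q)\le \tau(q)-\tau(p)$ whenever $p\le q$ (the key inequality being $\tfrac13(b^3-a^3)\le \Delta t-(\psi(b)-\psi(a))$ for $\psi(x)=x-x^3/3$ and $\Delta t\ge |b-a|$) --- yet condition (2) fails there, since the null pair $(0,0)\le(s,s)$ gives $\tau(s,s)-\tau(0,0)=s^3/3$, which is $o(s)$. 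So (1)\;$\Rightarrow$\;(2) requires a hypothesis on $d_U$ stronger than definiteness or topological compatibility (e.g.\ that $d_U$ be locally bounded below by a Riemannian distance, which makes the implication immediate). Your proposed fix via ``topological compatibility'' does not close the gap; it reproduces, in expanded form, the same false comparison principle on which the paper's own one-line proof rests.
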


A closely related `anti-Lipschitz' condition is used by Chru{\'s}ciel, Grant, and Minguzzi in \cite{difftime}, and is given roughly by condition (a) in Lemma \ref{antiiffrev} below. That this anti-Lipschitz condition implies the one above is essentially trivial, (modulo basic causal theory). We thank G. Galloway for noting that the less obvious converse should hold as well, and include his argument here. This is based on the following general fact that, in the eyes of a Riemannian metric, a causal curve between two points `can only be so long':

\begin{lem}  \label{bddwiggles} Let $M$ be a spacetime and fix a Riemannian metric $h$ on $M$. Then for each $p \in M$, there is a neighborhood $U$ of $p$, and a positive constant $C > 0$, such that, for each $x, y \in U$, and any causal curve $\a : I \to U$ from $x$ to $y$, we have $L_h(\a) \le C d_h(x,y)$.
\end{lem}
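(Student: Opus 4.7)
The plan is to reduce the bound $L_h(\alpha) \le C\, d_h(x,y)$ to two more elementary facts: first, the $h$-length of a causal curve in $U$ is controlled by the variation of a local time coordinate $t$ along it; and second, that variation of $t$ is in turn controlled by $d_h$ via Lipschitzness of a smooth function. The content lies in the first of these.

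To set up the local time coordinate, I would pick coordinates $(x^0, \ldots, x^n)$ centered at $p$ in which $g_{ij}(p) = \eta_{ij}$, and set $t := x^0$. Then $\nabla t$ is past-pointing timelike at $p$, and by continuity remains timelike on some open neighborhood $V$ of $p$. Fix a compact sub-neighborhood $\overline{V'} \subset V$, and let $\mathcal{K}$ denote the set of $h$-unit causal tangent vectors with basepoint in $\overline{V'}$. Then $\mathcal{K}$ is a closed subset of the compact $h$-unit sphere bundle over $\overline{V'}$, hence compact. The continuous function $\Phi(q,v) := |v(t)|$ is strictly positive on $\mathcal{K}$, since no nonzero causal vector can be $g$-orthogonal to a timelike vector (such as $\nabla t$). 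By compactness, $\Phi \ge c$ on $\mathcal{K}$ for some $c > 0$; equivalently, $\|v\|_h \le c^{-1}\,|v(t)|$ for every causal tangent vector $v$ with basepoint in $V'$.

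Next I would take $U \subset V'$ to be any relatively compact, $h$-convex neighborhood of $p$. For a nontrivial future causal curve $\alpha : [a,b] \to U$ from $x$ to $y$, monotonicity of $t\circ\alpha$ gives
\begin{equation*}
L_h(\alpha) \;=\; \int_a^b \|\alpha'(s)\|_h\, ds \;\le\; \frac{1}{c}\int_a^b (t\circ\alpha)'(s)\, ds \;=\; \frac{1}{c}\bigl(t(y)-t(x)\bigr),
\end{equation*}
and the past-directed case follows by reversing orientation (trivial causal curves are trivial on both sides). Since $t$ is smooth on $\overline{U}$ and $U$ is $h$-convex, we have $|t(y)-t(x)| \le K\, d_h(x,y)$ with $K := \sup_{\overline{U}}\|\nabla^h t\|_h$, and setting $C := K/c$ completes the argument.

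The main obstacle is the uniform cone-bound in the second step: we must rule out the possibility that causal cones approach the level sets of $t$ as the basepoint varies in $U$. This is handled by the combination of two ingredients -- timelikeness of $\nabla t$ is an open condition, so it persists on an open $V \ni p$, and compactness on a subneighborhood $\overline{V'} \subset V$ then yields the uniform constant $c > 0$. The Lipschitz step is routine once we have a neighborhood small enough for these transverse cone bounds to hold.
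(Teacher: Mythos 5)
Your proof is correct and follows essentially the same route as the paper's: both bound the $h$-length of a causal curve by a uniform constant times the elapsed value of a local time coordinate, and then compare that elapsed time to $d_h$ via a routine Lipschitz estimate. The only real difference is that you make explicit, via the compactness argument on the bundle of $h$-unit causal vectors, the uniform cone-trapping that the paper compresses into reducing ``by standard considerations'' to Minkowski space with the Euclidean metric.
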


\begin{proof} Because the result is local, it suffices by standard considerations to prove it for Minkowski space, with $h$ equal to the standard Euclidean metric. Let $\a$ be any future causal curve from the $\{t = 0\}$ slice to the $\{t = b\}$ slice. Then we have a parameterization $\a(t) = (t, \vec{x}(t))$, for $0 \le t \le b$. Since $\a$ is causal, we have $||\vec{x} \, '(t)||_h \le 1$. Hence, $L_h(\a) \le 2b \le 2d_h(\a(0), \a(b))$.
\end{proof}

\vspace{.5pc}
\begin{prop}  \label{antiiffrev} Let $M$ be a spacetime, and fix a Riemannian metric $h$ on $M$. The following conditions on a function $f : M \to \field{R}$ are equivalent. 
\ben
\item [(a)] For every $p \in M$, there is a neighborhood $U$ of $p$, and a positive constant $C > 0$, such that for all future causal curves $\a : [a,b] \to U$, we have
\be
f(\a(b)) - f(\a(a)) \ge C L_h(\a) \nonumber
\ee
\item [(b)] For every $p \in M$, there is a neighborhood $U$ of $p$, and a positive constant $C >0$, such that for all $x, y \in U$, with $x \le y$, we have
\be
f(y) - f(x) \ge C d_h(x,y) \nonumber
\ee
\een
\end{prop}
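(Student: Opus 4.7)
The plan is to prove the two implications separately. The nontrivial content is in (b) $\Rightarrow$ (a) and uses Lemma \ref{bddwiggles}; the reverse implication follows from basic local causal theory together with the elementary inequality $L_h(\a) \ge d_h(x, y)$.

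For (b) $\Rightarrow$ (a), I would fix $p \in M$ and choose a single neighborhood $U$ of $p$ on which both (b) (with constant $C_b$) and Lemma \ref{bddwiggles} (with constant $C_{bw}$) hold, intersecting the two given neighborhoods if necessary. For any future causal curve $\a : [a, b] \to U$ with endpoints $x := \a(a)$ and $y := \a(b)$, the existence of $\a$ shows $x \le y$, so (b) yields $f(y) - f(x) \ge C_b \, d_h(x, y)$, while Lemma \ref{bddwiggles} gives $L_h(\a) \le C_{bw} \, d_h(x, y)$. Combining these two bounds produces
\[
f(\a(b)) - f(\a(a)) \;\ge\; \frac{C_b}{C_{bw}} \, L_h(\a),
\]
which is (a) with $C = C_b / C_{bw}$.

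For the reverse direction (a) $\Rightarrow$ (b), I would let $U_a$ be the neighborhood from (a) with constant $C$, and take $U \subset U_a$ to be a convex normal neighborhood of $p$, small enough that for any $x, y \in U$ with $x \le y$ in $M$, local causal theory produces a future causal curve $\a$ from $x$ to $y$ lying in $U_a$. Then (a) applied to $\a$, together with the general fact $L_h(\a) \ge d_h(x, y)$, yields $f(y) - f(x) \ge C \, L_h(\a) \ge C \, d_h(x, y)$, which is (b) with the same constant.

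The main obstacle is the (a) $\Rightarrow$ (b) step of locating a causal curve within the prescribed neighborhood when $x, y \in U$ are causally related in $M$. In a sufficiently small convex normal neighborhood of $p$ this is handled by standard local causal considerations in the spirit of Proposition \ref{localcausality}, but it is the place where care is required. By contrast, the (b) $\Rightarrow$ (a) half is essentially mechanical once Lemma \ref{bddwiggles} is in hand: that lemma is precisely the ingredient needed to convert a bound phrased in terms of Riemannian distance into one phrased in terms of Riemannian arc length along a causal curve.
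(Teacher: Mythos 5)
Your (b) $\Rightarrow$ (a) half is exactly the paper's argument: intersect the neighborhoods from (b) and from Lemma \ref{bddwiggles}, note that a future causal curve $\a$ in $U$ itself witnesses $\a(a) \le \a(b)$, and chain the two inequalities to get the constant $C_b/C_{bw}$. That part is fine.

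The (a) $\Rightarrow$ (b) half has a genuine gap, and it sits precisely at the step you flagged. You propose to take a \emph{geodesically convex} normal neighborhood $U \subset U_a$ and invoke ``local causal theory in the spirit of Proposition \ref{localcausality}'' to produce a future causal curve from $x$ to $y$ inside $U_a$ whenever $x,y \in U$ satisfy $x \le y$ \emph{in $M$}. But Proposition \ref{localcausality}(1) only converts a causal curve \emph{already lying in $U$} into a causal geodesic of $U$; it says nothing when the only causal curves from $x$ to $y$ leave $U_a$. Geodesic convexity is not causal convexity: in a spacetime where strong causality fails near $p$ (or simply on the Lorentzian cylinder $S^1_t \times \field{R}$), one has points $x \le y$ arbitrarily close to $p$ whose connecting causal curves all exit every small neighborhood, and no amount of shrinking a convex normal neighborhood repairs this. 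The paper's proof handles exactly this issue: it first reduces to the strongly causal case, and then chooses not a convex neighborhood but a \emph{timelike diamond} $U_0 = I^+(p_-)\cap I^-(p_+)$ with $U_0 \subset\subset U$. The diamond is causally convex --- for $x,y \in U_0$ with $x \le y$ one has $J^+(x)\cap J^-(y) \subset U_0$ --- so every causal curve witnessing $x \le y$ automatically stays in $U$, and (a) applies to it. Replacing your convex normal neighborhood by such a diamond (and adding the reduction to strong causality) closes the gap; as written, the mechanism you cite does not.
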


\begin{proof} Because the statements are local, it suffices to assume that $M$ is strongly causal. For (a) $\implies$ (b), fix $p \in M$, and let $U$ and $C$ as in (a). Let $U_0 = I^+(p_-) \cap I^-(p_+)$ be a timelike diamond neighborhood of $p$, with $U_0 \subset \subset U$. Fix any $x, y \in U_0$, with $x \le y$. Since $U_0$ is a diamond, we have $J^+(x) \cap J^-(y) \subset U_0 \subset U$, and hence there is a future causal curve $\a : [a, b] \to U$, from $x = \a(a)$ to $y = \a(b)$. Then we have $f(y) - f(x) \ge CL_h(\a) \ge Cd_h(x,y)$. For (b) $\Longrightarrow$ (a), fix $p \in M$, and let $U$ and $C$ as in (b). Let $U_0 \subset U$ be a neighborhood of $p$, and $C_0 > 0$ a positive constant, both as in Lemma \ref{bddwiggles}. Then for any future causal curve $\a : [a,b] \to U_0$, we have $f(\a(b)) - f(\a(a)) \ge Cd_h(\a(a), \a(b)) \ge CC_0L_h(\a)$.
\end{proof}

\vspace{.5pc}
We note finally that Seifert considered similar `anti-Lipschitz' conditions in \cite{Seifertcosmic}. However, we will not explore these further here.

\subsection{Differentiable Functions}

In this section, we rephrase the anti-Lipschitz condition for differentiable (but not necessarily $C^1$) functions. We begin with a few Lorentzian basics. The following characterizations of past-pointing timelike vectors are easily established.
 
\begin{lem} \label{pasttimelike} Fix $p \in M$, and $T \in T_pM$. The following are equivalent.
\ben
\item [(a)] $T$ is past-pointing timelike.
\item [(b)] For all nontrivial future causal vectors $X \in T_pM$, $g(T,X) >0$.
\item [(c)] Given any Euclidean inner product $h_p$ on $T_pM$, there is a positive constant $C >0$, such that, for all future causal $X \in T_pM$, we have
$$g(T,X) \ge C||X||_{h_p}$$
\een
\end{lem}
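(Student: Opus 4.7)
My plan is to prove the equivalences via the two loops (a) $\Longleftrightarrow$ (b) and (b) $\Longleftrightarrow$ (c). The simplest of the four implications is (c) $\Longrightarrow$ (b): for any nontrivial future causal $X$, $\|X\|_{h_p} > 0$, so $g(T,X) \ge C\|X\|_{h_p} > 0$.

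For (b) $\Longrightarrow$ (c) I would use a compactness argument. Fix any Euclidean inner product $h_p$ on $T_pM$ and set $\mathscr{C} := \{X \in T_pM : X \text{ is future causal and } \|X\|_{h_p} = 1\}$. The future causal cone is closed and $\mathscr{C}$ is $h_p$-bounded, so $\mathscr{C}$ is compact; the continuous function $X \mapsto g(T,X)$ is positive on $\mathscr{C}$ by (b), hence attains a positive minimum $C > 0$. For arbitrary nontrivial future causal $X$, applying this bound to $X/\|X\|_{h_p}$ and rescaling gives the inequality in (c).

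For (a) $\Longrightarrow$ (b) I would compute in an orthonormal frame. Since $T$ is past timelike, the vector $e_0 := T/\sqrt{-g(T,T)}$ is past-pointing unit timelike, so extending to a $g$-orthonormal basis $e_0, e_1, \dots, e_n$, any future causal $X$ decomposes as $X = -x^0 e_0 + \sum_{i=1}^n x^i e_i$ with $x^0 > 0$ and $(x^0)^2 \ge \sum_i (x^i)^2$. Using $g(e_0,e_0) = -1$ then yields $g(T,X) = \sqrt{-g(T,T)}\, x^0 > 0$.

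The main obstacle is (b) $\Longrightarrow$ (a), which I would handle by ruling out every other possibility for $T$ via a specific witness $X$ violating (b). If $T = 0$ then $g(T,X) = 0$ for every $X$. If $T$ is itself nontrivial future causal, apply (b) with $X = T$ to obtain $g(T,T) > 0$, contradicting $g(T,T) \le 0$. If $T$ is past null, take $X = -T$ (future null); then $g(T,-T) = -g(T,T) = 0$. The most delicate subcase is $T$ spacelike: here $g(T,T) > 0$, so the orthogonal complement $T^\perp$ inherits a Lorentzian metric of signature $(-,+,\dots,+)$ and therefore contains a future-pointing timelike vector $X$ with $g(T,X) = 0$, again violating (b). The only remaining possibility is that $T$ is past timelike, which is (a). The spacelike subcase is the step I expect to require the most care, as it relies on the standard algebraic fact that $g|_{T^\perp}$ is Lorentzian of one lower dimension whenever $T$ itself is spacelike.
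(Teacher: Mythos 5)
Your proof is correct and complete; the paper itself offers no argument here (it merely remarks that these characterizations are ``easily established''), and what you supply is exactly the standard one: the orthonormal-frame computation for (a)$\Rightarrow$(b), the compactness/homogeneity argument on the unit future causal cone for (b)$\Rightarrow$(c), and the exhaustive case analysis (zero, future causal, past null, spacelike) for (b)$\Rightarrow$(a), including the correct use of the fact that $g|_{T^\perp}$ is Lorentzian when $T$ is spacelike. Nothing to add.
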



\vspace{.5pc}
A smooth function whose gradient is past-pointing timelike is necessarily a time function. Indeed, note that this is true with no regularity assumptions on the gradient:

\begin{lem} \label{timelikefunc} Let $f : M \to \field{R}$ be any function such that $\nabla f$ exists and is past-pointing timelike on all of $M$. (We do not assume $\nabla f$ is continuous.) Then $f$ is a time function.
\end{lem}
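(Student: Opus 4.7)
The plan is to verify the two defining properties of a time function: continuity of $f$, together with strict increase along future-directed causal curves. Continuity comes for free, since the hypothesis that $\nabla f$ exists at every point of $M$ already forces $f$ to be differentiable, and hence continuous, on all of $M$.

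For monotonicity, let $\alpha : [a,b] \to M$ be any nontrivial future-directed causal curve, and first consider the case in which $\alpha$ is smooth on $[a,b]$. Set $h(s) := f(\alpha(s))$. Since $f$ is differentiable at $\alpha(s)$ and $\alpha$ is smooth, the pointwise chain rule (which requires no regularity on $\nabla f$ beyond existence) gives
$$h'(s) \;=\; df_{\alpha(s)}(\alpha'(s)) \;=\; g\bigl(\nabla f(\alpha(s)),\, \alpha'(s)\bigr).$$
Because $\nabla f(\alpha(s))$ is past-pointing timelike and $\alpha'(s)$ is a nontrivial future causal vector, part (b) of Lemma \ref{pasttimelike} yields $h'(s) > 0$ for every $s \in [a,b]$. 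The mean value theorem, which applies to any differentiable real-valued function on $[a,b]$ regardless of continuity of its derivative, then gives $h(b) > h(a)$, i.e., $f(\alpha(b)) > f(\alpha(a))$.

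For a general piecewise smooth $\alpha = \alpha_1 \cdot \alpha_2 \cdots \alpha_k$, each segment $\alpha_i$ is smooth and future causal on its closed subinterval (the one-sided tangents at break points are future causal by the standing conventions on causal curves), so applying the smooth case to each $\alpha_i$ and chaining the resulting strict inequalities across the breaks produces $f(\alpha(b)) > f(\alpha(a))$. The only mildly subtle point, and hence the main potential obstacle, is that $\nabla f$ is not assumed to be continuous, so neither the chain rule nor the MVT can be invoked in their $C^1$ forms; however, both facts hold equally well in the purely differentiable category, so the argument goes through without modification.
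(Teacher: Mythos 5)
Your argument is correct and is essentially the paper's own proof: compute $(f\circ\a)' = g(\nabla f, \a') > 0$ via Lemma \ref{pasttimelike}, conclude strict monotonicity by the mean value theorem (valid for differentiable functions without continuity of the derivative), and chain over the smooth segments of a piecewise causal curve. Your explicit remarks on continuity of $f$ and on the extension to break points are details the paper leaves implicit, but the route is the same.
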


\begin{proof} Let $\a : [a, b] \to M$ be a smooth, (nontrivial), future-directed causal curve. (The argument extends easily to $\a$ piecewise smooth.) Since $\nabla f$ is past timelike, and $\a'$ is future causal, with $\a' \ne 0$, (since we assume a regular parametrization), we have $(f \circ \a)' = g(\nabla f, \a') > 0$, by Lemma \ref{pasttimelike}. It follows that $f$ is strictly increasing along $\a$, (e.g., by the Mean Value Theorem).
\end{proof}

\vspace{.5pc}
The following result further reformulates the anti-Lipschitz condition for differentiable functions.

\begin{prop} \label{revLipdiffble} Let $M$ be a spacetime, and let $\tau : M \to \field{R}$ be any function with a well-defined (but possibly discontinuous) gradient $\nabla \tau$ on all of $M$. Letting $h$ be any Riemannian metric on $M$, the following conditions are equivalent:
\ben
\item [(a)] For each $p \in M$, there is a neighborhood $U$ of $p$, and a positive constant $C > 0$, such that for all future causal vectors $X \in TU$, 
\be
g(\nabla \tau, X) \ge C||X||_h \nonumber 
\ee
\item [(b)] For each $p \in M$, there is a neighborhood $U$ of $p$, and a positive constant $C > 0$, such that for all $x, y \in U$, with $x \le y$, we have 
\be
\tau(y) - \tau(x) \ge C d_h(x,y) \nonumber
\ee
\een
If either of the above conditions hold, then $\tau$ is necessarily a (locally anti-Lipschitz) time function, with $\nabla \tau$ past-pointing timelike, and $||\nabla \tau||_g$ locally bounded away from zero.
\end{prop}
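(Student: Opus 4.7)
The plan is to pass between the pointwise condition (a) and the integrated condition (b) by way of Proposition \ref{antiiffrev}, and then to extract the concluding properties directly from (a).

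First, I would observe that condition (b) of the present proposition is precisely condition (b) of Proposition \ref{antiiffrev} (with $f = \tau$), so by that proposition it is equivalent to the curve-based statement, call it $(\mathrm{a}')$, that on some neighborhood $U$ of each point,
\[
\tau(\alpha(b)) - \tau(\alpha(a)) \ge C L_h(\alpha)
\]
for every future causal $\alpha : [a,b] \to U$. It therefore suffices to prove $(\mathrm{a}) \Leftrightarrow (\mathrm{a}')$. For $(\mathrm{a}) \Rightarrow (\mathrm{a}')$, the chain rule gives $(\tau \circ \alpha)'(t) = g(\nabla \tau, \alpha'(t))$ at every $t$ (since $\nabla \tau$ exists everywhere), and (a) bounds this below by $C \Vert \alpha'(t) \Vert_h \ge 0$. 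So $\tau \circ \alpha$ is monotone increasing, and the classical fact that a monotone increasing $F$ satisfies $F(b) - F(a) \ge \int_a^b F'(t)\, dt$ then yields $\tau(\alpha(b)) - \tau(\alpha(a)) \ge C L_h(\alpha)$. For $(\mathrm{a}') \Rightarrow (\mathrm{a})$, I would fix $q \in U$ and a future causal $X \in T_q M$, take a short smooth future causal curve $\alpha : [0,\epsilon] \to U$ with $\alpha(0) = q$ and $\alpha'(0) = X$, apply $(\mathrm{a}')$ to $\alpha\vert_{[0,t]}$, divide by $t$, and let $t \to 0^+$, using the standard Riemannian fact $\lim_{t \to 0^+} L_h(\alpha\vert_{[0,t]})/t = \Vert X \Vert_h$.

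For the concluding assertions: condition (b) with the choice $d_U := C\, d_h$ is exactly the definition of locally anti-Lipschitz (Definition \ref{revLip}). Condition (a) says $g(\nabla \tau, X) > 0$ for every nonzero future causal $X$ at every point, which is equivalent via Lemma \ref{pasttimelike} to $\nabla \tau$ being past-pointing timelike; then Lemma \ref{timelikefunc} gives that $\tau$ is a time function. Finally, to bound $\Vert \nabla \tau \Vert_g$ below on a precompact neighborhood, I would apply (a) with $X = -\nabla \tau$ (which is future timelike) to obtain $\Vert \nabla \tau \Vert_g^2 \ge C \Vert \nabla \tau \Vert_h$, and then work in a local $g$-orthonormal frame $\{e_0, \ldots, e_n\}$ with $e_0$ future timelike. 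There, (a) applied to $X = e_0$ yields $g(\nabla \tau, e_0) \ge C \Vert e_0 \Vert_h$, bounding below the $e_0$-component $T$ of $\nabla \tau$; uniform positive-definiteness of the matrix of $h$ in this basis over a precompact neighborhood then forces $\Vert \nabla \tau \Vert_h \ge \sqrt{\lambda}\, T$ for a positive constant $\lambda$, giving the desired lower bound.

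The main anticipated obstacle is the $(\mathrm{a}) \Rightarrow (\mathrm{a}')$ step, since $\nabla \tau$ is only assumed to exist pointwise, so $\tau \circ \alpha$ need not be absolutely continuous and one cannot invoke the fundamental theorem of calculus directly. The workaround is that the sign $(\tau \circ \alpha)' \ge 0$ \emph{everywhere} (not merely almost everywhere) forces $\tau \circ \alpha$ to be monotone, and for monotone functions the inequality $F(b) - F(a) \ge \int_a^b F'\, dt$ holds unconditionally.
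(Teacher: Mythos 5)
Your proposal is correct and follows essentially the same route as the paper: for (a) $\Rightarrow$ (b) both arguments use the everywhere-defined chain rule to get monotonicity of $\tau\circ\a$ and then the one-sided inequality $F(b)-F(a)\ge\int_a^b F'$ for monotone functions, feeding the result into Proposition \ref{antiiffrev}; for (b) $\Rightarrow$ (a) both compute the difference quotient of $\tau$ along a short curve with prescribed initial velocity (the paper uses an $h$-geodesic, restricts to timelike $X$, and extends to causal $X$ by continuity, while you route through Proposition \ref{antiiffrev} again and take a causal curve directly, which is a cosmetic difference). Your frame computation for the local lower bound on $\Vert\nabla\tau\Vert_g$ is a valid substitute for the paper's appeal to Lemma \ref{circVhyper} via Lemma \ref{locsteepT}.
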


\begin{proof} For (a) $\implies$ (b), fix $p \in M$. Let $U$ and $C$ as in (a). Let $\a : [a, b] \to U$ be nontrivial and future causal. Note that (a) implies that $\nabla \tau$ is past-pointing timelike on $M$ by Lemma \ref{pasttimelike}. Hence, $\tau$ is a time function by Lemma \ref{timelikefunc}, and $(\tau \circ \a)$ is strictly increasing on $[a,b]$. Standard analysis for monotonic functions then gives that $(\tau \circ \a)$ is differentiable almost everywhere, and the first inequality below:
$$\tau(\a(b)) - \tau(\a(a)) \ge \int_a^b (\tau \circ \a)'(s)ds = \int_a^b g(\nabla \tau, \a')ds \ge C L_h(\a)$$
This shows that condition `(a)' in Proposition \ref{antiiffrev} holds, and hence gives (b). For (b) $\implies$ (a), fix $p \in M$. Let $U$ and $C >0$ as in (b). Fix any $q \in U$, and any future-pointing timelike vector $X \in T_qU$. For sufficiently small $\e >0$, let $\a : (-\e, \e) \to U$ be an $h$-geodesic through $\a(0) = q$, with $\a'(0) = X/||X||_h$. Since $g(\a'(0), \a'(0)) = g(X,X)/||X||_h^2 < 0$, this remains true on a small interval, and hence $\a$ is timelike near $t = 0$. Furthermore, because $\a$ is an $h$-geodesic, it is $h$-minimal near $t = 0$. In particular, for all sufficiently small $t > 0$, we have $\a(0) \le \a(t)$, and $d_h(\a(0), \a(t)) = t$. Then using the estimate in (b), we get 
\be
g\bigg(\nabla \tau \, , \, \frac{X\,}{\,||X||_h}\bigg) = (\tau \circ \a)'(0) = \lim_{\; t \to 0^+}\dfrac{\tau(\a(t)) - \tau(\a(0))}{t} \ge \lim_{\;t \to 0^+}\dfrac{Ct}{t} = C \nonumber
\ee
Hence, we have shown that, for any future timelike $X \in TU$, we have
$$g(\nabla \tau, X) \ge C||X||_h$$
But by continuity of $g$ and $h$, this extends to all future causal $X \in TU$. Finally, note that either of these conditions imply that $\tau$ is a time function, and that by Lemma \ref{pasttimelike}, (a) implies that $\nabla \tau$ is past-pointing timelike. That $||\nabla \tau||_g$ is locally bounded away from zero follows as in the proof of Lemma \ref{locsteepT} below.
\end{proof}

\subsection{Almost Everywhere Differentiable Functions}

Our main goal in this section is to prove that, if $\tau$ is a time function with gradient almost everywhere, then $\tau$ is locally anti-Lipschitz provided that its gradient (where defined) remains `bounded away from the light cones'. We begin by making this condition precise as follows.

\begin{Def} \label{bddaway} Let $\mathcal{T}$ be a set of timelike vectors. Fix any Riemannian metric $h$ on $M$. We will say $\mathcal{T}$ is \emph{locally bounded away from the light cones} if, for all $p \in M$, there is a neighborhood $U $ of $p$, and a positive constant $C > 0$, such that for every $T \in \mathcal{T} \cap TU$, we have $||T||_g \ge C \max \{1, ||T||_h\}$.
\end{Def}

\vspace{.5pc}
The condition in Definition \ref{bddaway}, in some form or other, is standard throughout Lorentzian geometry. Often $\mathcal{T}$ is a set of unit vectors, in which case the condition can be simplified. Above, we allow for timelike vectors of arbitrary length. Note that the inequality $||T||_g \ge C||T||_h$ keeps $T$ bounded away from the `walls' of the light cone, while $||T||_g \ge C$ keeps $T$ bounded away from the `vertex' of the  cone. (See Figure \ref{bounded}.) We note also that this condition is independent of the choice of Riemannian metric.

\begin{figure} 
\begin{center}
\includegraphics[width=12cm]{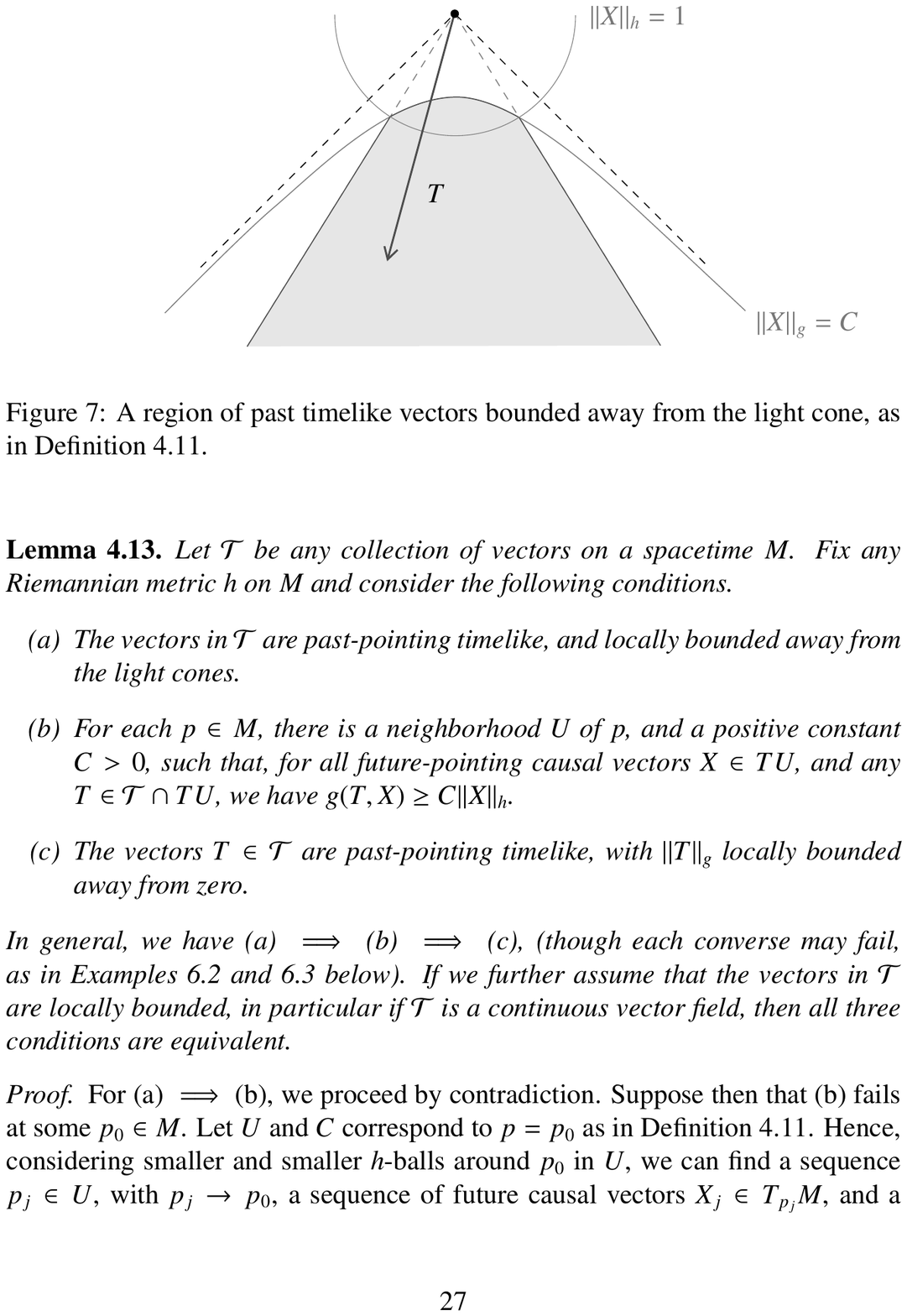}
\caption[...]{A region of past timelike vectors bounded away from the light cone, as in Definition \ref{bddaway}.  \label{bounded}} 
\end{center}
\end{figure}

\vspace{.5pc}
The following basic fact will be used in the proof of Lemma \ref{locsteepT}:

\begin{lem} \label{circVhyper} Let $(M,g)$ be a Lorentzian manifold. Then for any precompact neighborhood $U_0 \subset M$, and any Riemannian metric $h$ on $U_0$, there is a positive constant $C_0 > 0$ such that, for all  vectors $X \in TU_0$, we have $||X||_h \ge C_0||X||_g$.
\end{lem}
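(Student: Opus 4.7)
The plan is a short homogeneity-plus-compactness argument; no clever ingredient is required.

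First, I would rewrite the desired inequality $\|X\|_h \ge C_0 \|X\|_g$ as $|g_p(X,X)| \le (1/C_0^2)\, h_p(X,X)$. This is homogeneous of degree two in $X$ and trivially true at $X=0$. It therefore suffices to produce a single constant $K>0$ such that $|g_p(X,X)| \le K^2$ for every pair $(p,X)$ in the $h$-unit sphere bundle
\[
S_h TU_0 \;=\; \{ (p,X) : p \in U_0,\ h_p(X,X)=1 \};
\]
the lemma then follows with $C_0 = 1/K$.

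Second, I would extract such a $K$ from compactness. The map $\Phi(p,X) := |g_p(X,X)|$ is continuous on $S_h TU_0$, since $g$ and $h$ are smooth tensor fields. In the natural setting (where $h$ is the restriction to $U_0$ of a Riemannian metric defined on a neighborhood of $\overline{U_0}$, or globally on $M$), the closed $h$-unit sphere bundle fibered over the compact set $\overline{U_0}$ is itself compact, so $\Phi$ attains a finite maximum $K^2$ on it. Homogeneity then delivers the estimate on all of $TU_0$.

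I do not foresee a genuine obstacle; the one point worth flagging is the behavior of $h$ near $\partial U_0$. If $h$ were permitted to degenerate there, one could send $\|X\|_h \to 0$ while keeping $|g(X,X)|$ uniformly positive, and the stated inequality would fail. So the lemma is implicitly to be read in the standard situation (compatible with its uses in Definition \ref{bddaway} and Lemma \ref{locsteepT}) where $h$ is smoothly controlled up to $\overline{U_0}$; once that is granted, the compactness step above is immediate and finishes the proof.
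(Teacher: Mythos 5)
Your proof is correct; the paper in fact states Lemma \ref{circVhyper} without proof, and your homogeneity-plus-compactness argument (bounding $|g_p(X,X)|$ on the $h$-unit sphere bundle over $\overline{U_0}$) is precisely the standard argument the authors intend. Your caveat about $h$ possibly degenerating at $\partial U_0$ is a fair point about the literal statement, and you resolve it the right way: in every application in the paper (e.g.\ the proof of Lemma \ref{locsteepT}, and the analogous Lemma \ref{compRmetrics}) $h$ is the restriction of a metric defined on a neighborhood of $\overline{U_0}$, so the compactness step goes through.
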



\vspace{.5pc}
\begin{lem} \label{locsteepT} Let $\mathcal{T}$ be any collection of vectors on a spacetime $M$. Fix any Riemannian metric $h$ on $M$ and consider the following conditions.
\ben
\item [(a)] The vectors in $\mathcal{T}$ are past-pointing timelike, and locally bounded away from the light cones.
\item [(b)] For each $p \in M$, there is a neighborhood $U$ of $p$, and a positive constant $C > 0$, such that, for all future-pointing causal vectors $X \in TU$, and any $T \in \mathcal{T} \cap TU$, we have $g(T, X) \ge C ||X||_h$.
\item [(c)] The vectors $T \in \mathcal{T}$ are past-pointing timelike, with $||T||_g$ locally bounded away from zero.
\een
In general, we have (a) $\implies$ (b) $\implies$ (c), (though each converse may fail, as in Examples \ref{steepspills} and \ref{tipsnospill} below). If we further assume that the vectors in $\mathcal{T}$ are locally bounded, in particular if $\mathcal{T}$ is a continuous vector field, then all three conditions are equivalent.
\end{lem}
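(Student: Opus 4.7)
The plan is to reduce all three implications to local computations in an adapted Lorentz-orthonormal frame, leaning on Lemma \ref{circVhyper} to swap the reference Riemannian metric $h$ for a convenient one. Around any fixed $p \in M$ I would work on a precompact convex neighborhood $U_0$ carrying a smooth Lorentz-orthonormal frame $(e_0, e_1, \dots, e_n)$ with $e_0$ future timelike, and pass to the Euclidean reference metric $h_0$ defined by this frame. Since any two Riemannian metrics on $U_0$ are Lipschitz-equivalent, it is enough to prove each implication with $h$ replaced by $h_0$, at the cost of readjusting the constants.

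For (a)$\Rightarrow$(b), I would write $T = -T^0 e_0 + T_\perp$ with $T^0 > 0$ and observe that the hypothesis $||T||_g \ge C \max\{1, ||T||_{h_0}\}$ encodes two geometric facts: a uniform cone-angle bound $T^0 \ge k |T_\perp|$ with some $k > 1$ depending only on $C$, coming from the $C\,||T||_{h_0}$ half of the inequality, and a lower bound $T^0 \ge C$ on the timelike component coming from the $C$ half. For any future causal $X = X^0 e_0 + X_\perp$ the causal inequality reads $X^0 \ge |X_\perp|$, and combining these with Cauchy--Schwarz on the spatial part of $g(T,X)$ produces a lower bound of the form $g(T,X) \ge C(1 - k^{-1}) X^0$, which is readily compared to $||X||_{h_0} \le \sqrt{2}\, X^0$. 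I expect this to be the main step, since it is the one place where the Lorentzian structure is actually being exploited.

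For (b)$\Rightarrow$(c), Lemma \ref{pasttimelike} immediately forces every $T \in \mathcal{T}$ to be past-pointing timelike, because $g(T,X) \ge C\,||X||_h > 0$ on every nontrivial future causal $X$. The trick for the norm bound is to test condition (b) against the specific future timelike vector $X := -T$; this yields $||T||_g^{\,2} = g(T,-T) \ge C\,||T||_h$, and Lemma \ref{circVhyper}, applied on a slightly smaller precompact neighborhood, then gives $||T||_h \ge C_0 ||T||_g$, so $||T||_g \ge C C_0$ locally.

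Finally, the converse direction (c)$\Rightarrow$(a) under the additional hypothesis that the vectors of $\mathcal{T}$ are locally $h$-bounded is essentially bookkeeping: an upper bound $||T||_h \le K$ from local boundedness combined with a lower bound $||T||_g \ge c$ from (c) collapses to $||T||_g \ge \min\{c,\, c/K\}\,\max\{1, ||T||_h\}$, which is (a). The hardest and most informative step of the lemma is therefore (a)$\Rightarrow$(b); the other two implications are formal consequences once Lemma \ref{pasttimelike} and the Riemannian--Lorentzian norm comparison in Lemma \ref{circVhyper} are in hand.
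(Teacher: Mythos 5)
Your argument is correct, but for the main implication (a)~$\Rightarrow$~(b) it takes a genuinely different route from the paper. The paper argues by contradiction: assuming (b) fails, it extracts sequences $p_j \to p_0$, $h$-unit future causal $X_j \to X_0$, and $T_j \in \mathcal{T}$ with $g(T_j,X_j) < j^{-1}$, then splits into the cases $||T_j||_h$ bounded and $||T_j||_h \to \infty$ (rescaling to $W_j = T_j/||T_j||_h$ in the latter), and in each case produces a null limit vector, contradicting the lower bound $||T||_g \ge C\max\{1,||T||_h\}$. Your proof instead works directly in a local Lorentz-orthonormal frame: the two halves of the bound in Definition \ref{bddaway} translate exactly as you say into $T^0 \ge C$ and $T^0 \ge k|T_\perp|$ with $k>1$ (after reducing without loss of generality to $C<1$; for $C\ge 1$ the cone inequality forces $T_\perp = 0$ and the estimate is immediate), and Cauchy--Schwarz together with $X^0 \ge |X_\perp|$ gives $g(T,X) \ge (1-k^{-1})C\,X^0 \ge \tfrac{(1-k^{-1})C}{\sqrt{2}}||X||_{h_0}$. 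This yields explicit constants and avoids any compactness argument, at the price of fixing a frame; note that swapping between two Riemannian metrics on a precompact set is the content of Lemma \ref{compRmetrics} rather than Lemma \ref{circVhyper}, which compares a Riemannian norm with the Lorentzian one. Your (b)~$\Rightarrow$~(c) step (testing against $X=-T$ and invoking Lemmas \ref{pasttimelike} and \ref{circVhyper}) is identical to the paper's, and your direct bookkeeping for (c)~$\Rightarrow$~(a) under local boundedness replaces another short contradiction argument in the paper with the same content.
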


\begin{proof} For (a) $\implies$ (b), we proceed by contradiction. Suppose then that (b) fails at some $p_0 \in M$. Let $U$ and $C$ correspond to $p=p_0$ as in Definition \ref{bddaway}. Hence, considering smaller and smaller $h$-balls around $p_0$ in $U$, we can find a sequence $p_j \in U$, with $p_j \to p_0$, a sequence of future causal vectors $X_j \in T_{p_j}M$, and a sequence $T_j \in \mathcal{T} \cap T_{p_j}M$ such that
$$0 \le g(T_j,X_j) < j^{-1}||X_j||_h$$
Dividing by $||X_j||_h$, we may suppose that $||X_j||_h = 1$, and hence that $X_j \to X_0 \in T_{p_0}M$. Note that $X_0$ is necessarily future causal, and nontrivial, since $||X_0||_h = 1$. Suppose first that $||T_j||_h$ has a bounded subsequence. Then we may suppose that $T_j \to V_0 \in T_{p_0}M$, with $V_0$ past causal, and taking the limit above gives $g(V_0, X_0) = 0$. But this implies that $V_0$ is null, by Lemma \ref{pasttimelike}, and hence that $||T_j||_g \to ||V_0||_g = 0$, contradicting $||T_j||_g \ge C > 0$ from part (a). Suppose then that $||T_j||_h \to \8$. Letting $W_j := T_j/||T_j||_h$, then for all sufficiently large $j$, we have $||T_j||_h \ge 1$, and hence
$$0 \le g(W_j, X_j) < j^{-1}||T_j||^{-1}_h \le j^{-1}$$
Since $||W_j||_h = 1$, we may suppose that $W_j \to W_0 \in T_{p_0}M$. But then again, taking the limit shows that $W_0$ is null, which again contradicts the hypothesis in (a), 
$$0 < C \le ||T_j||_g/||T_j||_h = ||W_j||_g \to ||W_0||_g = 0$$
For (b) $\implies$ (c), first note that Lemma \ref{pasttimelike} implies that each $T \in \mathcal{T}$ is past-pointing timelike. Fix $p \in M$, and let $U$ and $C$ as in (b). Let $U_0$ be any precompact neighborhood of $p$, with $U_0 \subset \subset U$. Then, as in Lemma \ref{circVhyper}, there is a positive constant $C_0 > 0$ such that $||X||_h \ge C_0 ||X||_g$ for all $X \in TU_0$. Then for $q \in U_0$, and any $T \in \mathcal{T} \cap T_qM$, since $X = -T$ is future causal, we have 
$$||T||^2_g = g(T, -T) \ge C||T||_h \ge CC_0||T||_g$$
Hence $||T||_g \ge CC_0 > 0$. Finally, we show (c) $\implies$ (a) under the additional assumption that $T$ is locally bounded. Fix any $p_0 \in M$. Let $U$ be a neighborhood of $p_0$, such that $||T||_g \ge C > 0$, for all $T \in \mathcal{T} \cap TU$. Suppose there is a sequence $p_j \to p_0$, and vectors $T_j \in \mathcal{T} \cap T_{p_j}M$, for which $C \le ||T_j||_g < j^{-1}\max\{1, ||T_j||_h\}$. But since $\mathcal{T}$ is bounded near $p_0$, we have $||T_j||_h \le C_1$ for all sufficiently large $j$, and taking a limit of $0 < C \le ||T_j||_g \le j^{-1}(C_1+1)$ gives a contradiction. 
\end{proof}

\vspace{.5pc}
\begin{cor} \label{revLipsmooth} Let $f : M \to \field{R}$ be any function with continuous gradient $\nabla f$ on all of $M$. Then $f$ is locally anti-Lipschitz iff $\nabla f$ is past-pointing timelike.
\end{cor}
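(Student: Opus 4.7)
The plan is to combine Proposition \ref{revLipdiffble} with Lemma \ref{locsteepT}. Proposition \ref{revLipdiffble} already characterizes the local anti-Lipschitz property of a function $f$ with well-defined gradient in terms of a pointwise lower bound of the form $g(\nabla f, X) \ge C\|X\|_h$ for future causal $X$, while Lemma \ref{locsteepT} tells us exactly when such a pointwise lower bound holds for a continuous vector field. The corollary should then be a fairly mechanical stitching together of these two results once the bookkeeping between the differently-labeled conditions is sorted out.

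For the forward direction $(\Longrightarrow)$, I would simply invoke Proposition \ref{revLipdiffble}: if $f$ is locally anti-Lipschitz, that is its condition (b), and the closing clause of the proposition states that $\nabla f$ must then be past-pointing timelike (no continuity hypothesis even being needed here).

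For the reverse direction $(\Longleftarrow)$, suppose $\nabla f$ is continuous on $M$ and past-pointing timelike at every point. I would apply Lemma \ref{locsteepT} with $\mathcal{T}$ taken to be the (values of the) continuous vector field $\nabla f$. Continuity of $\nabla f$ furnishes the local boundedness hypothesis required for the equivalence of all three conditions of that lemma; and because $\nabla f$ is past-pointing timelike, $\|\nabla f\|_g$ is a continuous, strictly positive function, hence locally bounded away from zero by a routine compactness argument on a precompact neighborhood of any given point. This verifies condition (c) of Lemma \ref{locsteepT}, so by the lemma we obtain its condition (b): for each $p \in M$ there is a neighborhood $U$ and a constant $C > 0$ such that $g(\nabla f, X) \ge C \|X\|_h$ for every future causal $X \in TU$. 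But this is precisely condition (a) of Proposition \ref{revLipdiffble}, and the equivalence established there delivers that $f$ is locally anti-Lipschitz.

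The main obstacle is essentially nonexistent, since the heavy lifting was done in Proposition \ref{revLipdiffble} and Lemma \ref{locsteepT}. The only point requiring attention is aligning the two different labelings of the intermediate pointwise condition (condition (a) in Proposition \ref{revLipdiffble} versus condition (b) in Lemma \ref{locsteepT}) and noting that continuity of $\nabla f$ is exactly what is needed on both sides of the argument: it supplies local boundedness of $\mathcal{T}$ so that Lemma \ref{locsteepT} can be run in the $(c) \Longrightarrow (b)$ direction, and simultaneously makes $\|\nabla f\|_g$ locally bounded away from zero so that hypothesis (c) is free of charge once $\nabla f$ is known to be past-pointing timelike.
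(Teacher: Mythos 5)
Your proof is correct and follows essentially the same route as the paper: Proposition \ref{revLipdiffble} for the forward direction, and Lemma \ref{locsteepT} ((c) $\Rightarrow$ (b), using continuity of $\nabla f$ for local boundedness and positivity of $\|\nabla f\|_g$) combined with Proposition \ref{revLipdiffble} for the converse. The only cosmetic difference is that the paper also cites Lemma \ref{revLipRiem} to identify ``locally anti-Lipschitz'' (which allows an arbitrary distance function $d_U$) with condition (b) of Proposition \ref{revLipdiffble} (which is phrased in terms of a fixed $d_h$), a bookkeeping step you elide when you say the two are ``its condition (b).''
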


\begin{proof} If $f$ is locally anti-Lipschitz, then $\nabla f$ is past-pointing timelike, by Proposition \ref{revLipdiffble} and Lemma \ref{revLipRiem}. Suppose conversely that $\nabla f$ is past-pointing timelike. Then by Proposition \ref{revLipdiffble} and Lemma \ref{locsteepT}, we need only observe that $||\nabla f||_g$ is locally bounded away from zero, which follows from the continuity of $\nabla f$.
\end{proof}

\vspace{.5pc}
\begin{lem} \label{avoidance} Let $M$ be a spacetime and $Z \subset M$ a set of measure zero. If $p \ll q$, then there is a broken future timelike geodesic $\b : [a,b] \to M$, from $p = \b(a)$ to $q = \b(b)$, such that $\{s \in [a,b] : \b(s) \in Z\}$ has measure zero in $[a,b]$.
\end{lem}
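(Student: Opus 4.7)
The plan is to construct $\b$ as a concatenation of finitely many short timelike geodesic segments, chosen generically via a Fubini-type argument so that each segment meets $Z$ only in a measure-zero set of parameter values. Since $p \ll q$, fix any future timelike curve from $p$ to $q$, and cover its compact image by finitely many convex neighborhoods in $M$. From these, extract a chain $p = p_0, p_1, \ldots, p_N = q$ with $p_{i-1} \ll p_i$, where each consecutive pair $p_{i-1}, p_i$ lies in a common convex neighborhood $U_i$. In each $U_i$, the unique geodesic $\g_i$ from $p_{i-1}$ to $p_i$ is automatically timelike by Proposition~\ref{localcausality}.

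The key technical input is the following Fubini observation. Fix a convex neighborhood $U$ and a point $x \in U$, and define
\be
\Phi_x(y', s) := \exp_x\bigl(s \exp_x^{-1}(y')\bigr), \qquad (y',s) \in U \times [0,1]. \nonumber
\ee
For each fixed $s \in (0,1]$, the map $y' \mapsto \Phi_x(y', s)$ is a diffeomorphism onto its image (the composition of $\exp_x^{-1}$, rescaling in $T_xM$, and $\exp_x$). Hence $\Phi_x$ is a smooth submersion on $U \times (0,1]$, so $\Phi_x^{-1}(Z)$ has measure zero in $U \times [0,1]$ (the slice at $s = 0$ being negligible on its own). By Fubini, for almost every $y' \in U$ the set $\{s \in [0,1] : \g_{xy'}(s) \in Z\}$ has measure zero. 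Symmetrically, writing $\g_{x'y}(s) = \exp_y\bigl((1-s)\exp_y^{-1}(x')\bigr)$ yields the analogous statement with $y$ fixed and $x'$ varying: almost every starting point $x'$ produces a geodesic terminating at $y$ which meets $Z$ in measure zero of parameter.

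With this, I would now determine the $p_i$ inductively, keeping $p_0 = p$ and $p_N = q$ fixed. At step $i \le N-2$, having already chosen $p_0, \ldots, p_{i-1}$, pick $p_i$ from a small enough neighborhood of the original point so that $p_{i-1} \ll p_i \ll p_{i+1}^{\mathrm{old}}$ and $p_i \in U_i \cap U_{i+1}$ all persist (all open conditions); the Fubini lemma with $x = p_{i-1}$ then guarantees that a full-measure subset of such candidates gives $\g_{p_{i-1}p_i}$ meeting $Z$ in measure zero. The main obstacle is the final segment $\g_{p_{N-1}q}$, whose endpoint $q$ is \emph{fixed} by hypothesis and cannot be perturbed. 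I would resolve this by using the freedom in $p_{N-1}$ to satisfy two simultaneous generic conditions: $\g_{p_{N-2}p_{N-1}}$ avoids $Z$ in measure (Fubini with $x = p_{N-2}$) \emph{and} $\g_{p_{N-1}q}$ avoids $Z$ in measure (Fubini with $y = q$). Each excludes only a measure-zero set of candidates for $p_{N-1}$, so the intersection still has full measure inside the relevant admissible open set, and a valid $p_{N-1}$ exists. The resulting curve $\b = \g_1 \cdot \g_2 \cdots \g_N$, suitably reparametrized on $[a,b]$, is a broken future timelike geodesic from $p$ to $q$, and $\{s \in [a,b] : \b(s) \in Z\}$ is a finite union of null sets, hence null.
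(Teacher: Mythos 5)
Your argument is correct and rests on the same underlying idea as the paper's --- a Fubini/genericity argument over a family of geodesics swept out by the exponential map --- but the implementation is genuinely different. The paper reduces to a single convex neighborhood $U$, takes an achronal spacelike hypersurface $V$ through an interior point of the timelike geodesic $\g_{pq}$, and parameterizes the two-segment broken geodesics $\g_{px}\cdot\g_{xq}$ by $x\in V$; the map $\exp_p$ restricted to the cone $\{tv : v\in\widetilde V,\ t\in(0,1)\}$ is then a \emph{diffeomorphism} onto an open set diffeomorphic to $\widetilde V\times(0,1)$, so Fubini applies directly and a single generic break point does the whole job. You instead keep the chain of convex neighborhoods and perturb the interior vertices, letting the endpoint $y'$ range over the full $(n+1)$-dimensional set $U$, so that your $\Phi_x$ is a \emph{submersion} on $U\times(0,1]$ rather than a diffeomorphism; this trades the construction of the achronal hypersurface for the (standard, but worth stating) fact that submersions pull back null sets to null sets. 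Your two simultaneous generic conditions on $p_{N-1}$ --- forward from $p_{N-2}$ and backward to the fixed endpoint $q$ --- are the exact analogue of the paper's symmetric use of $\exp_q$ for the segments $\g_{xq}$, and the bookkeeping of open conditions in your induction is sound, since $y\in I^+(x;U_i)$ is open in the pair $(x,y)$. One small patch: if the original timelike curve already lies in a single convex neighborhood, your chain could degenerate to $N=1$, leaving no interior vertex to perturb; insist on $N\ge 2$ by inserting an interior point of the curve, after which your two-condition argument for $p_1$ handles this case exactly as the paper's single-neighborhood argument does.
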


\begin{proof} Let $\a$ be any future timelike curve from $p$ to $q$. By covering the image of $\a$ with a finite number of such neighborhoods if necessary, we may suppose $\a$ maps into a single convex neighborhood $U$. Because $U$ is convex, for each $x, y \in U$, there is a unique geodesic $\g_{xy}$ in $U$ from $x$ to $y$. Moreoever, if there is a timelike curve from $x$ to $y$ within $U$, then the geodesic $\g_{xy}$ is timelike, (see Proposition \ref{localcausality}). Since $\a$ is timelike, we have that $\g_{pq}$ is timelike. Let $z$ be any interior point of $\g_{pq}$, and let $\S \subset U$ be a smooth spacelike hypersurface through $z$, with $\S$ achronal in $U$. Then $I^+(p; U) \cap \S$ is an open neighborhood of $z$ in $\S$, where $I^+(p;U)$ is the relative future of $p$ within $U$ (viewed as a subspacetime), and similarly for $I^-(q;U) \cap \S$. Let $V$ be a neighborhood in $\S$ of $z$ with $V \subset I^+(p; U) \cap I^-(q; U)$. (See Figure \ref{avoid}.) 
\begin{figure} [h]
\begin{center}
\includegraphics[width=7cm]{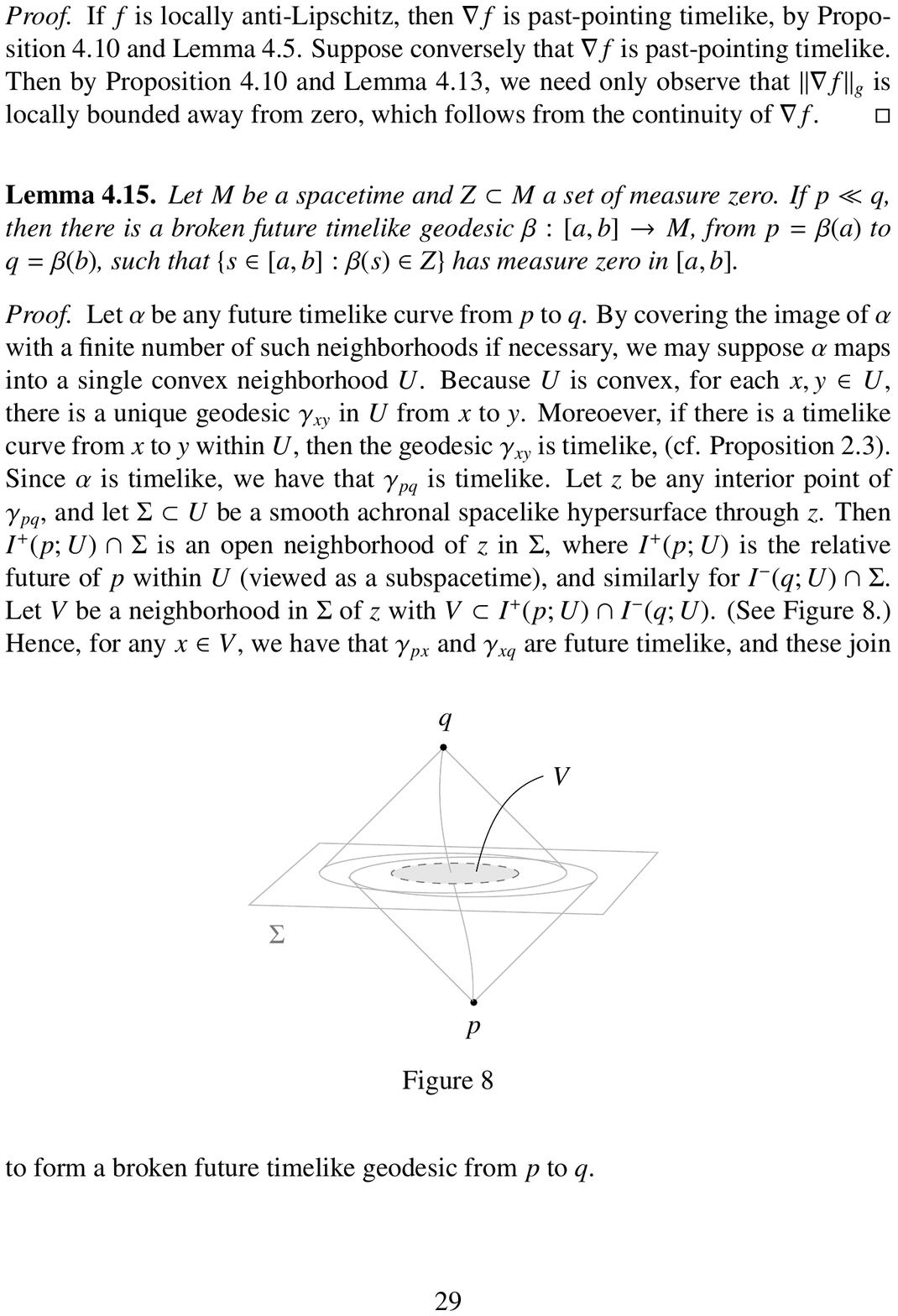}
\caption[The achronal hypersurface $V$ used in the proof of Lemma \ref{avoidance}.] {\label{avoid}}
\end{center}
\end{figure}
Hence, for any $x \in V$, we have that $\g_{px}$ and $\g_{xq}$ are future timelike, and these join to form a broken future timelike geodesic from $p$ to $q$. 

We will focus first on the collection of geodesics from $p$ to $V$, $\{\g_{px} : x \in V\}$. Note that since $U$ is convex, it is a normal neighborhood of $p$, i.e., there is a star-shaped open neighborhood $\widetilde{U}$ of $0$ in $T_pM$ such that the exponential map at $p$ gives a diffeomorphism $\mathrm{exp}_p : \widetilde{U} \to U$. Hence, $\widetilde{V} : = \exp_p^{-1}(V)$ is a smooth hypersurface in $\widetilde{U}$, and since $V$ is achronal, with each $\g_{px}$ parameterized on $[0,1]$, the restriction
$$F=\mathrm{exp}_p : \{tv: \, v\in \widetilde{V},\, t\in (0,1)\}
\to \{\gamma_{px}(t):\, x\in V, \, t\in (0,1)\}$$
is a diffeomorphism between open sets homeomorphic to $V \times (0,1) \approx \widetilde{V} \times (0,1)$. Let $\chi^{\,}_Z : M \to \{0,1\}$ be the indicator function of $Z$. Since $Z$ has measure zero in $M$, Fubini's Theorem gives
$$0 = \int_{\widetilde{V} \times [0,1]} (\chi^{\,}_Z \circ F) \, dx^{n+1} = \int_{\widetilde{V}} \int_0^1 (\chi^{\,}_Z \circ F) \, dt dx^n$$
Since the integrand above is nonnegative, then
for Lebesgue almost every $x \in \widetilde{V}$ we have 
$$0 = \int_0^1 (\chi_Z \circ F)(x,t) \, dt = \int_0^1 \chi_Z \circ\gamma_{px}(t)\, dt$$

\vspace{1pc}
\noindent
Hence, for almost every $x\in V$, we have:
$$\gamma_{px}^{-1}(Z) \textrm{ has Lebesgue measure $0$ in $[0,1]$}$$
To complete the proof, note that by symmetry we similarly have that, for almost every $x \in V$, $\gamma_{xq}^{-1}(Z)$ has measure zero. Hence, for almost every $x \in V$, the concatenation $\b = \g_{px} \cdot \g_{xq}$ is a broken future timelike geodesic from $p$ to $q$, such that $\b^{-1}(Z)$ has measure zero.
\end{proof}

\vspace{.5pc}
We are ready to prove the main result in this section:

\begin{thm} \label{bddgradthm} Suppose that $\tau$ is a time function on $M$, such that the gradient vectors $\nabla \tau$ exist almost everywhere, with $\nabla \tau$ timelike and locally bounded away from the light cones. Then $\tau$ is locally anti-Lipschitz, and hence induces a definite null distance function $\hat{d}_\tau$ on $M$.
\end{thm}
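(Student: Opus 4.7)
The plan is to establish condition (b) of Proposition \ref{revLipdiffble} directly by integrating the pointwise lower bound on $g(\nabla\tau,\cdot)$ along a carefully chosen broken timelike geodesic, using Lemma \ref{avoidance} to arrange that such a geodesic meets the exceptional set of non-differentiability only on a Lebesgue-null parameter set.

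First I would fix $p \in M$ and extract the pointwise gradient estimate. Applying the implication (a)$\implies$(b) of Lemma \ref{locsteepT} to the family $\mathcal{T}$ of existing gradient vectors, there is a neighborhood $U$ of $p$, which I may further shrink to a precompact convex timelike diamond, and a constant $C > 0$ such that
\[
g\bigl(\nabla\tau(x),X\bigr) \;\geq\; C\|X\|_h
\]
for every future-pointing causal $X \in T_xM$ at every $x \in U$ where $\nabla\tau(x)$ exists.

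Next, I would handle the chronological case. Fix $x,y \in U$ with $x \ll y$, and let $Z \subset M$ be the measure-zero set on which $\nabla\tau$ fails to exist. Lemma \ref{avoidance}, applied within $U$, produces a broken future timelike geodesic $\beta : [a,b] \to U$ from $x$ to $y$ with $\beta^{-1}(Z)$ Lebesgue-null in $[a,b]$. Since $\tau$ is continuous and strictly increasing along $\beta$, the composition $\tau \circ \beta$ is monotone and continuous, so Lebesgue's theorem on monotone functions gives
\[
\tau(y) - \tau(x) \;\geq\; \int_a^b (\tau\circ\beta)'(t)\,dt.
\]
At almost every $t \in [a,b]$ both $\beta'(t)$ exists (since $\beta$ is piecewise smooth) and $\nabla\tau(\beta(t))$ exists, so the chain rule yields $(\tau\circ\beta)'(t) = g(\nabla\tau(\beta(t)),\beta'(t)) \geq C\|\beta'(t)\|_h$. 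Integrating and comparing with Riemannian length gives
\[
\tau(y) - \tau(x) \;\geq\; C L_h(\beta) \;\geq\; C\, d_h(x,y).
\]

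To finish, I would extend to the merely causal case by a push-up argument. If $x \leq y$ in $U$, choose a sequence $y_n \in I^+(y) \cap U$ with $y_n \to y$; then $x \leq y \ll y_n$ forces $x \ll y_n$, so the chronological estimate yields $\tau(y_n) - \tau(x) \geq C\, d_h(x,y_n)$, and continuity of $\tau$ and $d_h$ delivers the same inequality in the limit. This verifies condition (b) of Proposition \ref{revLipdiffble} on $U$, hence $\tau$ is locally anti-Lipschitz, and Proposition \ref{revLipprop} (or directly Lemma \ref{revLiplem}) gives definiteness of $\hat{d}_\tau$. The main obstacle is the second step: without Lemma \ref{avoidance}, a given causal curve from $x$ to $y$ could a priori lie entirely in $Z$, rendering the pointwise bound on $g(\nabla\tau,\cdot)$ useless; the Fubini-type content of the avoidance lemma is precisely what allows the chain rule and the gradient estimate to be invoked on a full-measure subset of the parameter interval.
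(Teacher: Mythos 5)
Your proposal is correct and follows essentially the same route as the paper's proof: both extract the uniform estimate $g(\nabla\tau,X)\ge C\|X\|_h$ via the (a)$\implies$(b) direction of Lemma \ref{locsteepT} on a timelike diamond neighborhood, use Lemma \ref{avoidance} to integrate $(\tau\circ\alpha)'=g(\nabla\tau,\alpha')$ along a timelike curve meeting the non-differentiability set only on a null parameter set, and then pass from $x\ll y$ to $x\le y$ by approximating $y$ from the chronological future and using continuity of $\tau$ and $d_h$. No gaps to report.
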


\begin{proof} Fix any Riemannian metric $h$ on $M$, and any $p \in M$. Since the gradient field $\nabla \tau$ is bounded away from the light cones, we may use the `(a) $\Longrightarrow$ (b)' part of Lemma \ref{locsteepT} to choose a neighborhood $U$ of $p$, and a positive constant $C>0$, such that, for all future causal vectors $X \in TU$, we have $g(\nabla \tau, X) \ge C||X||_h$, wherever $\nabla \tau$ is defined. Note that since $\tau$ is a time function, $M$ is necessarily strongly causal, by Theorem \ref{tempiftime}. Let $W$ be a timelike diamond neighborhood of $p$ with $W \subset U$. We will show that $\tau$ is anti-Lipschitz on $W$. 

Fix first any $x, y \in W$ with $x \ll y$. Note that since $W$ is a diamond, any future causal curve from $x$ to $y$ lies entirely within $W$. Using Lemma \ref{avoidance}, let $\a : [a,b] \to W$ be a future timelike curve, such that $\nabla \tau$ is defined at $\a(s)$ for almost all parameter values $s \in [a,b]$. Since $\tau \circ \a$ is increasing on $[a,b]$, basic analysis gives
$$\tau(y) - \tau(x) \ge \int_a^b (\tau \circ \a)'(s) ds$$
Furthermore, for almost all $s \in [a, b]$, $\nabla \tau$ is defined at $\a(s)$ and we have
$$(\tau \circ \a)'(s) = g(\nabla \tau, \a')$$
Putting these together with the estimate above, we have
$$\tau(y) - \tau(x) \ge \int_a^b g(\nabla \tau, \a')ds \ge \int_a^b C||\a'||_hds = C L_h(\a)\ge C d_h(x,y)$$
Finally, for $x, y \in W$, with $x \le y$, let $y \ll y_j  \in W$, $y_j \to y$. Then we have $x \ll y_j$, and hence $\tau(y_j) - \tau(x) \ge C d_h(x,y_j)$, as above. Since $\tau$ and $d_h$ are continuous, taking the limit as $j \to \8$ gives $\tau(y) - \tau(x) \ge C d_h(x,y)$.
\end{proof}

\vspace{1pc}
We note finally that the proof of Theorem \ref{bddgradthm} shows that the `(a) $\Longrightarrow$ (b)' part of Proposition \ref{revLipdiffble} also holds for time functions $\tau$ which are (only) differentiable almost everywhere. Since the original proof of the converse remains valid in this case, we note that the regularity assumption in Proposition \ref{revLipdiffble} may be relaxed accordingly.

\section{Cosmological Time} \label{seccosmo}

We now focus on what may be viewed as a canonical time function for spacetimes emanating from an `initial singularity', defined by Andersson, Galloway, and Howard in \cite{AGHcosmo} as follows: 

\begin{Def} [\cite{AGHcosmo}] Let $(M,g)$ be a spacetime, with Lorentzian distance $d_g$. The \emph{cosmological time function} $\tau_g$ of $M$ is defined by
$$\tau_g(q) := \sup_{p \le q} \, d_g(p,q)$$ 
Letting $\mathcal{C}^-(q)$ denote the set of all past causal curves from $q$, then equivalently:
$$\tau_g(q) = \sup \, \{L_g(\a) : \a \in \mathcal{C}^-(q) \}$$
\end{Def}

\vspace{1pc}
(Wald and Yip previously used the time dual of this in \cite{WaldYip}, there called the `\emph{maximum lifetime function}'.) Clearly, $\tau_g \equiv \infty$ on Minkowski. On the other hand, on any spacetime with finite past, $\tau_g$ is finite, and increases (weakly) on future causal curves. Simple examples show however that, for one, $\tau_g$ need not be continuous in this case. In \cite{AGHcosmo}, $\tau_g$ is said to be \emph{regular} if $\tau_g$ is finite-valued on all of $M$, and $\tau_g \to 0$ along every past-inextendible causal curve. We recall some facts from \cite{AGHcosmo}:

\begin{thm}  [\cite{AGHcosmo}] \label{AGHcosmothm} Suppose $M$ has regular cosmological time $\tau_g$. Then we have:
\ben
\item [(1)] $\tau_g$ is a (continuous) time function on $M$, satisfying
$$x \le y \; \; \Longrightarrow \; \; \tau_g(y) - \tau_g(x) \ge d_g(x,y)$$
\item [(2)] The gradient $\nabla \tau_g$ exists almost everywhere.
\item [(3)] For each point $q \in M$, there is a future-directed unit-speed timelike geodesic $\g_q : (0, \tau_g(q)] \to M$, called a `generator', along which $\tau_g(\g_q(t)) = t$, ending at $\g_q(\tau_g(q)) = q$.
\item [(4)] The set of all terminal tangent vectors $\mathcal{T} = \{\g_q'(\tau_g(q)) : q \in M\}$ is locally bounded away from the light cones.
\een
\end{thm}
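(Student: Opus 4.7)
Since this theorem is attributed to \cite{AGHcosmo}, my plan is to sketch the essential ingredients used there, taking the four claims in turn and indicating which rely on the regularity hypothesis.

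For (1), the anti-Lipschitz inequality $\tau_g(y) - \tau_g(x) \ge d_g(x,y)$ is a direct consequence of the reverse triangle inequality of Proposition \ref{revtriangle}. Given $x \le y$, I would note that for every $p \le x$ one has $p \le x \le y$, so $d_g(p,x) + d_g(x,y) \le d_g(p,y) \le \tau_g(y)$; taking the supremum over $p \le x$ yields the claim. Monotonicity of $\tau_g$ along future causal curves follows immediately. For continuity, the Lorentzian distance $d_g$ is lower semicontinuous on causal pairs in general, which transfers to lower semicontinuity of $\tau_g$; the harder direction is upper semicontinuity, and this is precisely where \emph{regularity} is essential. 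The idea is that if $\tau_g$ fails to be upper semicontinuous at $q$, one can extract a sequence $q_n \to q$ and realizing past causal curves $\a_n$ from near-maximizers to $q_n$ whose $g$-lengths converge above $\tau_g(q)$. A limit curve argument then produces a past causal curve at $q$ whose length exceeds $\tau_g(q)$, unless the curves escape to the past boundary; the regularity hypothesis $\tau_g \to 0$ along past-inextendible causal curves is exactly what rules out this escape.

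For (2), I would combine (1) with a matching upper bound to conclude $\tau_g$ is locally Lipschitz, and then invoke the Rademacher-type theorem on manifolds. The lower bound $\tau_g(y) - \tau_g(x) \ge d_g(x,y)$ together with Lemma \ref{bddwiggles} gives local control of $\tau_g$ by the Riemannian distance $d_h$ in the causal direction, and once one produces a locally Lipschitz upper bound (this can be derived from the explicit generators in (3), or more directly from finite-valuedness plus the reverse triangle inequality applied to a convex neighborhood), $\tau_g$ is seen to be locally Lipschitz with respect to any background Riemannian metric, hence differentiable almost everywhere.

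For (3), which I expect to be the main obstacle, the plan is a standard limit-curve argument, with regularity doing the crucial job of keeping maximizing sequences from escaping. Given $q$, pick $p_n \le q$ with $d_g(p_n, q) \to \tau_g(q)$ and corresponding maximizing past causal unit-speed geodesics $\g_n$ from $q$ of length $d_g(p_n, q)$. Parametrize each $\g_n$ by proper time from $q$ and pass to a subsequential limit curve $\g_q$ using the standard causal limit curve lemma. Regularity ensures $\g_q$ is defined on the full interval $(0, \tau_g(q)]$ rather than terminating prematurely at a past-inextendible point before time $\tau_g(q)$. Upper semicontinuity of $d_g$ on compact causal pieces (or direct comparison) shows $\g_q$ remains length-maximizing, hence is a unit-speed timelike geodesic; the identity $\tau_g(\g_q(t)) = t$ along it follows from (1) together with maximality.

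For (4), I would deduce the bound on terminal tangent vectors from the differentiability of $\tau_g$ almost everywhere together with the identity $\tau_g(\g_q(t)) = t$. Along a generator, $(\tau_g \circ \g_q)'(t) = 1 = g(\nabla \tau_g, \g_q')$ at a.e. $t$, and comparing with $|g(\nabla \tau_g, \g_q')| \le \|\nabla \tau_g\|_g \|\g_q'\|_g$ forces $\|\nabla \tau_g\|_g \|\g_q'\|_g \ge 1$. Since $\g_q$ is unit-speed, $\|\g_q'\|_g = 1$, giving $\|\nabla \tau_g\|_g \ge 1$ at generic points on generators. A local upper Lipschitz bound on $\tau_g$ in the $h$-metric translates, via Proposition \ref{revLipdiffble} applied in reverse, into a uniform bound $g(\nabla\tau_g, X) \le C\|X\|_h$; combined with the orthogonality/Cauchy--Schwarz structure in the past cone this keeps the terminal tangents $\g_q'(\tau_g(q))$ uniformly bounded in $h$ and uniformly bounded below in $g$, which is exactly the condition of Definition \ref{bddaway}. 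The main subtlety here is passing the a.e. pointwise estimates, which hold on the dense set where $\nabla\tau_g$ exists, to a uniform statement about \emph{all} terminal tangents, which one handles by a continuity/limit argument exploiting that generators vary continuously with their endpoint on the subset where differentiability holds.
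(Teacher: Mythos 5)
The paper itself offers no proof of this theorem: it is simply quoted from \cite{AGHcosmo}, which you correctly identify. So the question is whether your reconstruction of the AGH argument is sound. Your sketches of (1), (2), and (3) capture the essential ideas — the reverse triangle inequality for the anti-Lipschitz bound, a local Lipschitz bound plus Rademacher for (2), and a limit-curve argument with regularity preventing escape to the boundary for (3). These are consistent with how \cite{AGHcosmo} proceeds.

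Your argument for (4), however, has two genuine problems. First, the inequality $|g(\nabla\tau_g, \g_q')| \le \|\nabla\tau_g\|_g\|\g_q'\|_g$ is the \emph{wrong direction}: for timelike vectors in (opposite) time cones, the Cauchy--Schwarz inequality is reversed, $g(\nabla\tau_g, \g_q') \ge \|\nabla\tau_g\|_g\|\g_q'\|_g$. Using the correct inequality with $g(\nabla\tau_g, \g_q') = 1$ and $\|\g_q'\|_g = 1$ gives $\|\nabla\tau_g\|_g \le 1$, not the lower bound you claim. (Indeed the paper's Proposition \ref{cosmograd} proves $\|\nabla\tau_g\|_g = 1$ as an equality, by a separate argument.) Second, and more seriously, your plan to pass from the a.e.-differentiable points to \emph{all} terminal tangent vectors rests on the claim that ``generators vary continuously with their endpoint,'' which is false in general — the paper itself observes, immediately after stating the theorem, that generators need not be unique, and at such points the terminal tangents jump. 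The argument in \cite{AGHcosmo} does not go through differentiability at all: it is a compactness/limit-curve argument. If terminal tangents $v_n = \g_{q_n}'(\tau_g(q_n))$ with $q_n \to q_0$ in a compact set were to degenerate toward the light cone ($\|v_n\|_h \to \infty$), the generators $\g_{q_n}$ would $C^0$-converge (after reparameterization) to a past-inextendible \emph{null} curve from $q_0$; upper semicontinuity of Lorentzian arc-length under $C^0$ convergence then forces $0 = L_g(\g_0) \ge \limsup L_g(\g_{q_n}) = \limsup \tau_g(q_n) = \tau_g(q_0) > 0$, a contradiction. Regularity is used to ensure the limit curve actually reaches the past boundary rather than stopping short. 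You should replace your approach to (4) with this argument.
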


\vspace{.5pc}
Note that, in general, there may be several generators to a given point $q \in M$. (For a simple example, take two points $p_1$ and $p_2$ in a common time-slice in Minkowski, and consider the spacetime $M = I^+(p_1) \cup I^+(p_2)$.) The following result shows that a regular $\tau_g$ can not be differentiable at such points.

\begin{prop} \label{cosmograd} Suppose $\tau_g$ is regular. If $\nabla \tau_g$ exists at $q \in M$, then there is a unique generator $\g_q$ to $q$, and $(\nabla \tau_g)_q = - \g_q'(\tau_g(q))$. In particular, wherever it is defined, $\nabla \tau_g$ is past timelike unit.
\end{prop}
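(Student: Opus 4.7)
The plan is to combine two ingredients: the fact that $\tau_g$ restricts to the arclength parameter along any generator, which pins down one component of $(\nabla \tau_g)_q$, and an infinitesimal version of the Lipschitz-type estimate in part (1) of Theorem \ref{AGHcosmothm}, which constrains the remaining components.

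First, fix a generator $\g_q : (0, \tau_g(q)] \to M$ ending at $q$, and set $T := \g_q'(\tau_g(q))$, a future-pointing unit timelike vector. Since $\tau_g \circ \g_q$ is the identity on $(0, \tau_g(q)]$ by part (3) of Theorem \ref{AGHcosmothm}, its left derivative at $t = \tau_g(q)$ equals $1$. On the other hand, because $\tau_g$ is differentiable at $q = \g_q(\tau_g(q))$, the one-sided chain rule gives that this derivative also equals $d\tau_g|_q(T) = g((\nabla \tau_g)_q, T)$. Hence
\be
g((\nabla \tau_g)_q, T) = 1. \label{plan:eq1}
\ee

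Next, I would establish the following infinitesimal bound: for every future-directed timelike $X \in T_qM$,
\be
d\tau_g|_q(X) \ge \sqrt{-g(X,X)}. \label{plan:eq2}
\ee
To prove this, extend $X$ to a timelike geodesic $\a(s)$ with $\a(0)=q$ and $\a'(0) = X$. For all sufficiently small $s>0$, $q \ll \a(s)$, and $\a|_{[0,s]}$ has Lorentzian length $s\sqrt{-g(X,X)}$, so $d_g(q, \a(s)) \ge s\sqrt{-g(X,X)}$. Part (1) of Theorem \ref{AGHcosmothm} then gives $\tau_g(\a(s)) - \tau_g(q) \ge s\sqrt{-g(X,X)}$, and dividing by $s$ and letting $s \to 0^+$ yields \eqref{plan:eq2}.

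To conclude, choose an orthonormal basis $\{T, e_1, \dots, e_n\}$ of $T_qM$, with each $e_i$ spacelike and $g$-orthogonal to $T$, and write $(\nabla \tau_g)_q = -a_0 T + \sum_i a_i e_i$. Equation \eqref{plan:eq1} immediately gives $a_0 = 1$. For each $i$, apply \eqref{plan:eq2} to $X_\e := T + \e e_i$, which is future timelike for small $|\e|$ with $-g(X_\e,X_\e) = 1 - \e^2$; the left side evaluates to $1 + \e a_i$, and comparison with $\sqrt{1-\e^2} = 1 - \tfrac{1}{2}\e^2 + O(\e^4)$ as $\e \to 0^\pm$ forces $a_i \ge 0$ and $a_i \le 0$ respectively, hence $a_i = 0$. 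Thus $(\nabla \tau_g)_q = -T$, which is past-pointing unit timelike. Uniqueness of the generator is then automatic: any second generator $\t\g_q$ ending at $q$ produces by the same argument a terminal tangent equal to $-(\nabla \tau_g)_q = T$, and two past-directed unit-speed geodesics through $q$ with common initial velocity $-T$ must coincide. The main subtlety, rather than a real obstacle, is the chain-rule step at the endpoint of $\g_q$: $\tau_g$ is only assumed differentiable at the single point $q$, not in a neighborhood, but this is handled cleanly by the standard one-sided chain rule, and is the only place where anything beyond plain linear algebra in $T_qM$ is used.
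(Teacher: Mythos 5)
Your proof is correct and follows essentially the same two-step strategy as the paper's: first pin down the component of $(\nabla \tau_g)_q$ along the generator using $\tau_g(\g_q(t)) = t$, then use the estimate $\tau_g(y) - \tau_g(x) \ge d_g(x,y)$ from Theorem \ref{AGHcosmothm}(1) in infinitesimal form to kill the remaining components. The only difference is cosmetic and in your favor: the paper writes $\nabla \tau_g = -au + bv$, invokes a priori causality of the gradient to get $0 \le b \le 1$, and then splits into the cases $b < 1$ and $b = 1$ (the latter handled by a family of boosted vectors $Au - \sqrt{A^2-1}\,v$), whereas your two-sided $\pm\e$ perturbation of $T$ eliminates each spacelike component directly without either the causality input or the case analysis.
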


\begin{proof} Let $\tau = \tau_g$, and fix any $q \in M$ at which $\nabla \tau$ exists. Let $\g = \g_q$ be any generator to $q$. Then $u := \g'(\tau(q))$ is a future timelike unit vector at $q$, and at this point we may write $\nabla \tau = -au + bv$, where $v$ is a spacelike unit vector orthogonal to $u$, and $a, b \ge 0$. Since $\g$ is a generator for $\tau$, we have
$$a = g(\nabla \tau, \g') = \frac{d}{dt}(\tau (\g(t))) = \frac{d}{dt}(t) = 1$$
Hence, $\nabla \tau = - u + bv$, and since this must be causal, we have $0 \le b \le 1$. In particular, this gives $||\nabla \tau||_g = \sqrt{1 - b^2} \le 1$. On the other hand, fix any future-pointing timelike unit vector $V \in T_qM$, and let $\a : (-\e, \e) \to M$ be a future timelike curve with $\a'(0) = V$, parameterized with respect to (Lorentzian) arc length. Then using the `reverse Lipschitz' condition in (1) of Theorem \ref{AGHcosmothm}, we have
\be
g(\nabla \tau, V)= \lim_{\; t \to 0^+} \frac{\tau(\a(t)) - \tau(\a(0))}{t} \ge \lim_{\; t \to 0^+} \frac{d_g(\a(0), \a(t))}{t} \ge 1\nonumber
\ee
We will show that this fact implies $b = 0$, from which the result follows. First note that if $b \not = 1$, and hence $\nabla \tau$ is not null, then we may apply the above to $V = - \nabla \tau / ||\nabla \tau||_g$, which gives
$||\nabla \tau||_g = g(\nabla \tau, V)  \ge 1$. Hence, $||\nabla \tau||_g = 1$, and $b = 0$. Suppose on the other hand that $b = 1$. Then for any $A \ge 1$, the vector $V = Au - (\sqrt{A^2-1}\,)v$ is a future timelike unit vector, and hence we have
$$1 \le g(\nabla \tau, V) = A - \sqrt{A^2-1}$$
But since the right-hand side goes to 0 as $A \to \infty$, this gives a contradiction.
\end{proof}

\vspace{.5pc}
Our main result on cosmological time now follows by combining Theorem \ref{AGHcosmothm} and Proposition \ref{cosmograd} with Theorem \ref{bddgradthm}:

\begin{thm} \label{cosmorevLip} Let $(M,g)$ be a spacetime with regular cosmological time function $\tau_g$. Then $\tau_g$ is locally anti-Lipschitz, and thus induces a definite null distance function on $M$.
\end{thm}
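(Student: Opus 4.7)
The plan is to assemble the machinery already developed: the theorem follows essentially by checking the hypotheses of Theorem \ref{bddgradthm} for $\tau = \tau_g$, and then invoking Proposition \ref{revLipprop} (or equivalently Theorem \ref{revLipthm}) to pass from the anti-Lipschitz property to the definiteness of $\hat{d}_{\tau_g}$.

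First, I would recall from Theorem \ref{AGHcosmothm} that regularity of $\tau_g$ guarantees three things relevant for our purposes: $\tau_g$ is a (continuous) time function on $M$; its gradient $\nabla \tau_g$ exists almost everywhere; and the set of terminal tangent vectors $\mathcal{T} = \{\g_q'(\tau_g(q)) : q \in M\}$ of the cosmological generators is locally bounded away from the light cones in the sense of Definition \ref{bddaway}. The second ingredient is Proposition \ref{cosmograd}, which identifies $\nabla \tau_g$ with $-\g_q'(\tau_g(q))$ at every point $q$ where $\nabla \tau_g$ is defined; in particular $(\nabla \tau_g)_q$ is past-pointing timelike unit, and belongs to $-\mathcal{T}$.

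The key step is to transfer the locally-bounded-away-from-the-light-cones property from $\mathcal{T}$ to the almost-everywhere-defined gradient field $\nabla \tau_g$. Since Definition \ref{bddaway} is clearly invariant under the map $T \mapsto -T$ (the sign of $g(T,T)$ and the norms $\|T\|_g$, $\|T\|_h$ do not change), the set $-\mathcal{T}$ of past-pointing terminal tangent vectors is locally bounded away from the light cones as well. By Proposition \ref{cosmograd}, at every $q$ where the gradient exists we have $(\nabla \tau_g)_q \in -\mathcal{T} \cap T_qM$, so the collection $\{(\nabla \tau_g)_q : q \in M, \; \nabla \tau_g \text{ exists at } q\}$ is a subset of $-\mathcal{T}$ and therefore inherits the same local bound.

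With these pieces in place, the hypotheses of Theorem \ref{bddgradthm} are satisfied: $\tau_g$ is a time function, its gradient exists almost everywhere, and that gradient is timelike and locally bounded away from the light cones. Applying Theorem \ref{bddgradthm} yields that $\tau_g$ is locally anti-Lipschitz. Finally, Proposition \ref{revLipprop} immediately converts this into the definiteness of $\hat{d}_{\tau_g}$, completing the proof. I do not anticipate any real obstacle — the work was all done in establishing Theorem \ref{bddgradthm} and in the gradient identification of Proposition \ref{cosmograd}; the only point worth being careful about is the sign reminder that turns terminal tangents into gradients, but this is cosmetic because Definition \ref{bddaway} is sign-symmetric.
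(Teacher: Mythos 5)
Your proof is correct and follows exactly the paper's route: the paper proves this theorem by precisely the combination you describe, namely Theorem \ref{AGHcosmothm} plus Proposition \ref{cosmograd} feeding into Theorem \ref{bddgradthm}. Your extra care in noting that the gradient field is a subset of $-\mathcal{T}$ and that Definition \ref{bddaway} is sign-symmetric just makes explicit what the paper leaves implicit.
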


\section{Appendix}

Examples distinguishing the conditions in Lemma \ref{locsteepT} are given. We first note the following:

\begin{lem} \label{compRmetrics} Let $M$ be a manifold and $U \subset M$ open. Let $h_1, h_2$ be any two Riemannian metrics on $U$. Then for any precompact neighborhood $U_1 \subset \subset U$, there is a positive constant $C_1 > 0$ such that, for all vectors $X \in TU_1$, we have $||X||_{h_2} \ge C_1||X||_{h_1}$.
\end{lem}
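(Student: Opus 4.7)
The plan is to reduce the inequality to a compactness argument on a unit sphere bundle. First I would pass from the open precompact neighborhood $U_1$ to its compact closure $K := \overline{U_1} \subset U$, since any bound that holds for all $X \in TM|_K$ certainly holds for all $X \in TU_1$. The key observation is that the desired inequality $\|X\|_{h_2} \ge C_1 \|X\|_{h_1}$ is homogeneous of degree $1$ in $X$: if it holds for all unit vectors with respect to $h_1$, then it holds for all $X$, including $X = 0$ trivially.

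Next I would form the unit sphere bundle
\[
S_{h_1}(K) := \{\, X \in TM|_K : \|X\|_{h_1} = 1 \,\}
\]
and consider the function $\Phi : S_{h_1}(K) \to \mathbb{R}$ defined by $\Phi(X) = \|X\|_{h_2}$. Since both $h_1$ and $h_2$ are smooth (hence continuous) Riemannian metrics, $\Phi$ is continuous, and it is everywhere strictly positive because $X$ nonzero implies $h_2(X,X) > 0$. The space $S_{h_1}(K)$ is a continuous fiber bundle with compact fibers (spheres) over the compact base $K$; in particular, $S_{h_1}(K)$ is itself compact. Therefore $\Phi$ attains a positive minimum value $C_1 > 0$ on $S_{h_1}(K)$.

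Finally, for any nonzero $X \in TU_1 \subset TM|_K$, the normalization $X/\|X\|_{h_1}$ lies in $S_{h_1}(K)$, so
\[
\frac{\|X\|_{h_2}}{\|X\|_{h_1}} = \left\|\frac{X}{\|X\|_{h_1}}\right\|_{h_2} \ge C_1,
\]
which rearranges to the required inequality. The statement is trivial when $X = 0$. The only subtle point is ensuring the compactness of $S_{h_1}(K)$, which is immediate once one knows $K$ is compact and the fibers of the sphere bundle are standard spheres; this is the main (and only mild) obstacle, and it is handled automatically by the hypothesis $U_1 \subset \subset U$.
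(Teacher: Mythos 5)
Your argument is correct: passing to the compact closure $\overline{U_1}$, minimizing the continuous positive function $X \mapsto \|X\|_{h_2}$ over the compact $h_1$-unit sphere bundle, and concluding by homogeneity is the standard proof of this fact. The paper states Lemma \ref{compRmetrics} without proof, treating it as elementary, and your argument is exactly the one intended.
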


\begin{exm} \label{steepspills} Consider the following vector field $T$ on the Minkowski plane, $\field{M}^{1+1} = \{(t,x)\}$.
\[
T := \left\{
        \begin{array}{ll}
            \, \, - \d_t  & \; \; \; \; \; \; x \in (-\8,-1] \cup \{0\} \cup [1, \8) \\ [.5pc]
            - \dfrac{1}{|x|}\d_t + \dfrac{\sqrt{1-x^2}}{x}\d_x &  \; \; \; \; \; \; x \in (-1,0) \cup (0,1) \\[.2pc]
        \end{array}
    \right.
\]
Then $T$ is everywhere past-pointing timelike, with $||T||_g = 1$. Let $X = \d_t - \d_x$. Then for $x \in (0,1)$, 
\be
g(T,X) = g\bigg(- \frac{1}{x}\d_t + \frac{\sqrt{1-x^2}}{x}\d_x \, , \, \d_t -\d_x\bigg) = \frac{1 -\sqrt{1-x^2}}{x} \nonumber
\ee
Let $U$ be any neighborhood of $(0,0)$, and $h$ any Riemannian metric on $U$. Fix any precompact neighborhood $U_0$ of $(0,0)$, with $U_0 \subset \subset U$. Then, as in Lemma \ref{compRmetrics}, there is a $C >0$ such that, for all vectors $X \in TU_0$, we have $||X||_h \ge C||X||_{\field{E}}$, where the latter is the norm with respect to the standard Euclidean metric on $\field{R}^2$. We have $||X||_{\field{E}} = \sqrt{2}$, and hence
\be
\dfrac{g(T,X)}{||X||_h} \le \dfrac{g(T,X)}{C||X||_{\field{E}}} = \frac{1 -\sqrt{1-x^2}}{\sqrt{2}Cx} \nonumber
\ee
Since the right hand side goes to 0 as $x \to 0$, condition (b) in Lemma \ref{locsteepT} fails.
\end{exm}

\begin{exm} \label{tipsnospill}  Consider the following vector field $T$ on the Minkowski plane:
\[
T := \left\{
        \begin{array}{ll}
            - \sqrt{2} \bigg((j+1)\, \d_t + j \d_x \bigg)& \; \; \; (t,x) = (0,1/j), \, j \in \field{N}\\ [.8pc]
             - \sqrt{2}\d_t &  \; \; \; (t,x) \ne (0,1/j) \\[.2pc]
        \end{array}
    \right.
\]
Hence, $T$ (or rather $-T$) `tips over' at $(0,0)$. In particular, condition (a) in Lemma \ref{locsteepT} fails. However, fix any nontrivial future causal vector $X = X^u\d_t + X^1\d_x$. Hence, $X^u >0$ and $X^u \ge |X^1|$. If $X \in T_pM$, with $p \ne (0,1/j)$, then
\be
\dfrac{g(T,X)}{||X||_\field{E}} = \dfrac{\sqrt{2} X^u}{\sqrt{(X^u)^2 + (X^1)^2}} \ge \dfrac{\sqrt{2}X^u}{\sqrt{2}X^u} = 1 \nonumber
\ee
If $X \in T_pM$, with $p = (0,1/j)$, then
\be
\dfrac{g(T,X)}{||X||_\field{E}} = \dfrac{\sqrt{2} [(j+1)X^u - jX^1]}{\sqrt{(X^u)^2 + (X^1)^2}} \ge \dfrac{\sqrt{2} [(j+1)X^u - jX^u]}{\sqrt{2}X^u} = 1 \nonumber
\ee
Let $U$ be any neighborhood of $(0,0)$, and $h$ any Riemannian metric on $U$. Let $U_0$ be any precompact neighborhood of $(0,0)$, with $U_0 \subset \subset U$. Then by Lemma \ref{compRmetrics} there is a constant $C>0$ such that, for all $X \in TU_0$, we have $||X||_\field{E} \ge C||X||_h$. Then, by the above, for any future causal $X \in TU_0$, we have
\be
\dfrac{g(T,X)}{||X||_h} \ge \dfrac{g(T,X)}{C^{-1}||X||_{\field{E}}} \ge C \nonumber
\ee
\end{exm}


\providecommand{\bysame}{\leavevmode\hbox to3em{\hrulefill}\thinspace}
\providecommand{\MR}{\relax\ifhmode\unskip\space\fi MR }
\providecommand{\MRhref}[2]{%
  \href{http://www.ams.org/mathscinet-getitem?mr=#1}{#2}}
\providecommand{\href}[2]{#2}

\end{document}